\newcommand{\rn}{\mathbb{R}^n}
\newcommand{\R}{\mathbb{R}}
\newcommand{\N}{\mathbb{N}}
\newcommand{\W}{\mathbb{W}}
\newcommand{\<}{\langle}
\renewcommand{\>}{\rangle}
\renewcommand{\a}{\alpha}
\renewcommand{\b}{\beta}
\renewcommand{\l}{\lambda}
\renewcommand{\L}{\Lambda}
\newcommand{\vphi}{\varphi}
\newcommand{\ov}{\overline}
\newcommand{\Om}{\Omega}
\newcommand{\Nu}{\widetilde{\mathcal{V}}}
\newcommand{\dive}{\mbox{div}}
\newcommand{\PV}{\mbox{P.V.}}
\newcommand{\Haus}{\mathcal{H}}
\newcommand{\dist}{\mbox{dist}}
\newcommand{\ccp}{\texttt{ccp}}
\renewcommand{\S}{\mathcal{S}}
\newcommand{\T}{\mathcal{T}}
\newcommand{\loc}{{\rm loc}}
\newcommand{\Dini}{{\rm Dini}}
\newcommand{\coloneqq}{\mathrel{\mathop:}=}
\def\sm{\setminus}
\def\d{\delta}
\def\g{q}
\def\G{\Gamma}
\def\tg{\tilde{g}}
\def\tf{\tilde{f}}
\def\tw{\widetilde{w}}
\def\tu{\tilde{u}}
\def\tx{\tilde{x}}
\def\vrho{\varrho}
\def\th{\tilde{h}}
\DeclareMathOperator*{\osc}{osc}
\DeclareMathOperator*{\supp}{supp}
\def\Xint#1{\mathchoice
	{\XXint\displaystyle\textstyle{#1}}%
	{\XXint\textstyle\scriptstyle{#1}}%
	{\XXint\scriptstyle\scriptscriptstyle{#1}}%
	{\XXint\scriptscriptstyle\scriptscriptstyle{#1}}%
	\!\int}
\def\XXint#1#2#3{{\setbox0=\hbox{$#1{#2#3}{\int}$ }
		\vcenter{\hbox{$#2#3$ }}\kern-.6\wd0}}
\def\dashint{\Xint-}
\newtheorem*{theorem*}{Theorem}
\newtheorem{theorem}{Theorem}[section]
\newtheorem{lemma}[theorem]{Lemma}
\newtheorem{proposition}[theorem]{Proposition}
\newtheorem{corollary}[theorem]{Corollary}
\numberwithin{equation}{section}
\title[Global regularity and Hopf lemma for mixed local-nonlocal operators]{Global gradient regularity and a Hopf lemma for quasilinear operators of mixed local-nonlocal type}
\author{Carlo Alberto Antonini}
\author{Matteo Cozzi}
\address{
\vspace{20pt}
\vspace{-\baselineskip}
\newline
\textit{Carlo Alberto Antonini}
\newline
Universit\`a degli Studi di Milano, Dipartimento di Matematica ``Federigo Enriques'', Via Saldini 50, 20133 Milan, Italy
\newline
\textit{E-mail address}: \textit{\tt carlo.antonini@unimi.it}
}
\address{
\vspace{-\baselineskip}
\newline
\textit{Matteo Cozzi}
\newline
Universit\`a degli Studi di Milano, Dipartimento di Matematica ``Federigo Enriques'', Via Saldini 50, 20133 Milan, Italy
\newline
\textit{E-mail address}: \textit{\tt matteo.cozzi@unimi.it}
}
\begin{document}

\subjclass[2020]{35B65, 35J60, 35M12, 35R11}
\keywords{Boundary regularity, mixed local and nonlocal operators,~$p$-Laplacian, fractional~$(s,q)$-Laplacian, Hopf lemma}

\maketitle

\begin{abstract}
We address some regularity issues for mixed local-nonlocal quasilinear operators modeled upon the sum of a~$p$-Laplacian and of a fractional~$(s, q)$-Laplacian. Under suitable assumptions on the right-hand sides and the outer data, we show that weak solutions of the Dirichlet problem are~$C^{1, \theta}$-regular up to the boundary. In addition, we establish a Hopf type lemma for positive supersolutions. Both results hold assuming the boundary of the reference domain to be merely of class~$C^{1, \alpha}$, while for the regularity result we also require that~$p > s q$.
\end{abstract}

%\begin{abstract}
%    We study the Dirichlet problem for a class of mixed local-nonlocal quasilinear operators, the model case given by
%    \begin{equation*}
%        \begin{dcases}-\Delta_p u+(-\Delta_q)^s u=f\quad & \text{in $\Omega$}
%        \\
%        u=g\quad & \text{in  $\R^n\setminus \Omega$}
%        \end{dcases}
%    \end{equation*}
%where $p,q\in (1,+\infty)$ and $s\in (0,1)$ satisfy $p>sq$.  Under suitable regularity 
%assumptions on $f,g$ and the domain $\Omega$, we show that weak solutions to this problem are $C^{1,\theta}$-regular up to the boundary. We also  establish a Hopf type lemma for this family of operators.
%\end{abstract}

\vspace{40pt}

\section{Introduction}\label{sec:intro}

\noindent
Linear second-order elliptic integrodifferential operators are important analytical objects connected with diffusion processes, as, by the L\'evy-Khintchine formula, they are the infinitesimal generator of a general L\'evy process. They are structured as the sum of two parts: a local second-order elliptic operator associated with a Brownian motion and a purely nonlocal integrodifferential operator of fractional order modeling a L\'evy flight.

In the last twenty years a great deal of research has focused on the study of purely nonlocal operators. Comparatively, much fewer articles have investigated the effect of coupling local and nonlocal terms. Without any claim to exhaustiveness, we point out the papers~\cite{CKSV10, CKSV12} by Chen, Kim, Song \& Vondra\v{c}ek, which established Green function estimates and a boundary Harnack inequality for the Dirichlet problem, and the series of contributions~\cite{BDVV21, BDVV22, BDVV23b, BDVV23c} by Biagi, Dipierro, Valdinoci \& Vecchi, where a number of properties enjoyed by the solutions of linear and semilinear equations are studied. We note in passing that the scope of these papers was confined to the model operator determined by the superposition of the Laplacian and of one of its fractional powers.

More recently, attention has been given to quasilinear generalizations of these models, such as the sum of a~$p$-Laplacian and of a fractional~$(s, q)$-Laplacian---see, for instance,~\cite{BDVV23,BMV,dS20,GK22,GL,min22}. The article~\cite{min22} by De Filippis \& Mingione, in particular, obtained several regularity results for a general class of equations for this kind of operators, such as the interior H\"older continuity of the gradient of the solutions and their global almost Lipschitz character, under the assumption that~$p > s q$. In the present paper, we take inspiration from~\cite{min22} and focus on two connected issues: the boundary gradient regularity of solutions and the validity of a Hopf type lemma. Our results apply to a large family of quasilinear elliptic operators of mixed local-nonlocal type, which we now proceed to define.

\vspace{4pt}

Let~$n \ge 2$ be an integer,~$p, q \in (1, +\infty)$, and~$s\in (0,1)$. Let~$\Omega \subset \R^n$ be a bounded open set. We consider the operator
\begin{equation}\label{defQ}
Qu \coloneqq Q_L \, u + Q_N \, u,    
\end{equation}
defined as the sum of the local term
$$
Q_L \, u(x) = Q_L^A  \, u(x) \coloneqq - \dive A \big( {x, D u(x)} \big)
$$
and of the nonlocal one
$$
Q_N \, u(x) = Q_N^{\phi, B, s, q} \, u(x) \coloneqq 2 \, \PV \int_{\R^n} \phi \big( {u(x) - u(y)} \big) \frac{B(x, y)}{|x - y|^{n + s q}} \, dy.
$$
Here,~$A: \Omega \times \R^n \to \R^n$ is a continuous vector field such that~$A(x, \cdot) \in C^1(\R^n \setminus \{ 0 \}; \R^n)$ for all~$x \in \Omega$, $A(\cdot, z) \in C^\alpha(\Omega; \R^n)$ for all~$z \in \R^n$, and which satisfies the~$p$-growth and coercivity conditions
\begin{equation} \label{ass:A}
\begin{cases}
|A(x, z)| + |z| |\partial_z A(x, z)| \le \Lambda \left( |z|^2 + \mu^2 \right)^{\frac{p - 2}{2}} |z| & \quad \mbox{for } x \in \Omega, \, z \in \R^n \setminus \{ 0 \}, \\
\left| A(x, z) - A(y, z) \right| \le \Lambda \left( |z|^2 + \mu^2 \right)^{\frac{p - 1}{2}} |x - y|^\alpha & \quad \mbox{for } x, y \in \Omega, \, z \in \R^n, \\
\langle \partial_z A(x, z) \xi, \xi \rangle \ge \Lambda^{-1} \left( |z|^2 + \mu^2 \right)^{\frac{p - 2}{2}} |\xi|^2 & \quad \mbox{for } x \in \Omega, \, z \in \R^n \setminus \{ 0 \}, \, \xi \in \R^n,
\end{cases}
\end{equation}
for some constants~$\alpha \in (0, 1)$,~$\mu \in [0, 1]$, and~$\Lambda \ge 1$, while~$B: \R^n \times \R^n \to [0, +\infty)$ is a measurable function satisfying
\begin{equation} \label{ass:B}
B(x, y) = B(y, x) \quad \mbox{and} \quad \Lambda^{-1} \le B(x, y) \le \Lambda \quad \mbox{for a.a.~} x, y \in \R^n,
\end{equation}
and~$\phi \in C^0(\R)$ is an odd, non-decreasing function fulfilling the~$q$-growth and coercivity assumption
\begin{equation} \label{ass:phi}
\Lambda^{-1} |t|^q \le \phi(t) \, t \le \Lambda |t|^q \quad \mbox{for all } t \in \R.
\end{equation}
The prototypical example of such an operator is, of course,
\begin{equation} \label{proto}
- \Delta_p u + (-\Delta_q)^s u,
\end{equation}
which is obtained by taking~$A(x, z) = |z|^{p - 2} z$,~$\phi(t) = |t|^{q - 2} t$, and~$B$ equal to a constant.

\vspace{4pt}

Our first result concerns the global differentiability of weak solutions to the Dirichlet problem for the operator~$Q$. The notion of weak solution and the relevant functional spaces will be made precise in Section~\ref{sec:prel}.

\begin{theorem}[Global $C^{1,\theta}$-regularity]\label{main:thm}
Let~$p, q \in (1, +\infty)$ and~$s \in (0, 1)$ be such that
\begin{equation}\label{psq}
    p > s q.
\end{equation}
Let~$\Omega \subset \subset \Omega' \subset \R^n$ be bounded open sets, with~$\partial \Omega$ of class~$C^{1, \alpha}$ for some~$\alpha \in (0, 1)$. Suppose that~$A$,~$B$, and~$\phi$ satisfy assumptions~\eqref{ass:A},~\eqref{ass:B}, and~\eqref{ass:phi}. Let~$f \in L^d(\Omega)$ for some~$d > n$ and~$g \in \W^{s, q}(\Omega) \cap W^{1,\infty}(\Omega') \cap C^{1, \alpha}(\partial\Omega)$. Let~$u\in W^{1, p}_g(\Omega) \cap \W_g^{s, q}(\Omega)$ be the weak solution of the Dirichlet problem
\begin{equation} \label{dirprobforu}
\begin{cases}
Q u = f & \quad \mbox{in } \Omega, \\
u = g & \quad \mbox{in } \R^n \setminus \Omega.
\end{cases}
\end{equation}
Then,~$u \in C^{1, \theta}(\overline{\Omega})$ and
$$
\| u \|_{C^{1,\theta}(\ov{\Om})}\leq C,
$$
for some constants~$\theta\in (0,1)$ and~$C>0$ depending only on~$n$,~$p$,~$q$,~$s$,~$\L$,~$d$,~$\alpha$,~$\Omega$, and~$\Omega'$, as well as on~$\| f \|_{L^d(\Om)}$,~$\| g \|_{\W^{s, q}(\Omega)}$,~$\| g \|_{W^{1,\infty}(\Omega')}$, and~$\| g \|_{C^{1, \alpha}(\partial\Omega)}$.
\end{theorem}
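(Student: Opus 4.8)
The plan is to reduce the global $C^{1,\theta}$ estimate to a combination of interior regularity (available from De Filippis \& Mingione \cite{min22}) and boundary regularity, the latter obtained by a flattening-and-freezing argument. First I would record a few preliminary reductions. By subtracting off the boundary datum one may work with a function whose trace vanishes; alternatively, one extends $g$ suitably and treats the equation for $u - g$, absorbing the extra terms coming from $Q_L g$ and $Q_N g$ into the right-hand side. Since $g \in W^{1,\infty}(\Omega')$ and, by assumption \eqref{psq}, the nonlocal operator has order $sq < p$, the nonlocal tail contributions are subcritical and can be controlled by the $W^{s,q}$-norm of $u$ together with $\|g\|_{W^{1,\infty}(\Omega')}$; this is exactly the point where $p > sq$ is used, and it allows one to treat $Q_N u$ perturbatively with respect to the leading $p$-Laplacian-type operator $Q_L$. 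The global $L^\infty$ bound for $u$ and the finiteness of the $W^{1,p}\cap \widetilde W^{s,q}$ norms follow from standard energy estimates for the Dirichlet problem and should be stated as a lemma before the proof.

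Next I would set up the boundary analysis. Fix a boundary point $x_0 \in \partial\Omega$ and, using that $\partial\Omega \in C^{1,\alpha}$, choose a $C^{1,\alpha}$ diffeomorphism $\Psi$ flattening $\partial\Omega$ near $x_0$, mapping a neighborhood $\Omega \cap B_r(x_0)$ onto a half-ball $B_\rho^+ = B_\rho \cap \{x_n > 0\}$ and the boundary piece onto $\{x_n = 0\}$. The transformed function $\tilde u = u \circ \Psi^{-1}$ solves an equation of the same structural type: the local part becomes $-\dive \tilde A(x, D\tilde u)$ for a new vector field $\tilde A$ still satisfying \eqref{ass:A} (with a possibly larger $\Lambda$, and the H\"older exponent of the $x$-dependence now $\min\{\alpha, \text{(regularity of }\Psi)\}$, hence still positive), while the nonlocal part becomes a nonlocal operator with a kernel $\tilde B(x,y)/|x-y|^{n+sq}$ that is comparable to the original one — the Jacobian factors from the change of variables perturb $B$ boundedly and preserve symmetry after the usual symmetrization. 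The boundary datum $\tilde g$ inherits $C^{1,\alpha}$-regularity on $\{x_n = 0\}$ and $W^{1,\infty}$ in a neighborhood. So it suffices to prove a $C^{1,\theta}$ estimate up to the flat boundary $\{x_n = 0\}$ for such equations on half-balls.

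For the flat-boundary estimate I would proceed by comparison and freezing of coefficients, in the spirit of the boundary regularity theory for the $p$-Laplacian (Lieberman) adapted to the mixed setting. After subtracting an affine function matching the boundary gradient of $\tilde g$, one reduces to nearly-homogeneous boundary data. One then compares $\tilde u$ on a half-ball $B_\rho^+$ with the solution $v$ of the frozen, homogeneous problem $-\dive \tilde A(x_0, Dv) = 0$ in $B_\rho^+$ with $v = \tilde u$ on $\partial B_\rho^+$: the difference is controlled by $\rho^\alpha$-terms (from the $x$-dependence of $\tilde A$), by $\|f\|_{L^d}$-terms (yielding a positive power of $\rho$ since $d > n$), and by the nonlocal tail of $\tilde u$, which by $p > sq$ contributes a power $\rho^{p - sq}$ or similar — crucially a positive power. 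The frozen homogeneous problem enjoys $C^{1,\beta}$ boundary estimates by classical theory. A standard iteration of this comparison over dyadic half-balls then yields decay of the excess $\dashint_{B_\rho^+} |D\tilde u - (D\tilde u)_{\rho}|^p$, hence Campanato-type $C^{1,\theta}$-regularity of $\tilde u$ up to $\{x_n = 0\}$, with the quantitative bound in terms of the listed data. Combining the interior estimate of \cite{min22} with this boundary estimate via a standard covering and patching argument gives $u \in C^{1,\theta}(\overline\Omega)$ with the stated dependence of the constants.

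The main obstacle will be handling the nonlocal term $Q_N u$ in the boundary comparison: unlike in the interior, near $\partial\Omega$ the values of $u$ outside $\Omega$ enter through the boundary datum $g$, and one must show that the resulting nonlocal contribution is a genuinely lower-order perturbation of the local $p$-Laplacian-type operator. This is precisely where the hypothesis $p > sq$ is indispensable: it ensures that, at scale $\rho$, the nonlocal term scales like $\rho^{-sq}$ against the local term's $\rho^{-p}$, so after the natural normalization the nonlocal part is multiplied by a positive power of $\rho$ and can be absorbed into the excess-decay iteration. Making this rigorous — in particular controlling the tail $\int_{\R^n \setminus B_\rho} |\phi(\tilde u(x) - \tilde u(y))|\,|x-y|^{-n-sq}\,dy$ uniformly up to the boundary using only $\|u\|_{L^\infty}$, $\|g\|_{W^{1,\infty}(\Omega')}$, and $\|g\|_{\widetilde W^{s,q}(\Omega)}$ — and verifying the structural conditions survive the $C^{1,\alpha}$ flattening, will be the technical heart of the argument.
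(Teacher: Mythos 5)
Your plan follows essentially the same route as the paper: an $L^\infty$/energy reduction of the data, a $C^{1,\alpha}$ flattening of the boundary preserving the structure conditions, comparison on half-balls with the frozen homogeneous problem $\dive \widetilde A(\tilde x_0, D\tilde h)=0$ (whose boundary $C^{1,\sigma}$ estimates come from Lieberman), an excess-decay/Campanato iteration, and patching with the interior estimates of \cite{min22}. The only substantive difference is in how the smallness of the nonlocal contribution is actually extracted: rather than your scaling count giving ``$\rho^{p-sq}$ or similar,'' the paper tests with $\tilde w=\tilde u-\tilde h$ and uses the a priori $C^\beta$ regularity of $\tilde u$ for every $\beta<1$ (itself a consequence of \cite{min22}, where $p>sq$ enters, and also needed to place $\tilde w$ in $\W^{s,q}_0$), obtaining powers such as $\varrho^{(\beta-s)(q-1)}$ and $\varrho^{\beta-s}$ — so the technical heart you flag is carried out via this almost-Lipschitz input together with the boundary Caccioppoli and interpolation inequalities of \cite{min22}.
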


Under virtually the same assumptions on~$Q$, interior~$C^{1, \theta}$ estimates and boundary almost Lipschitz regularity were established in~\cite{min22}. Theorem~\ref{main:thm} provides a strengthening of these results, in the case of a sufficiently regular outside datum~$g$. We also point out that, for~$Q$ as in~\eqref{proto} with~$p = q = 2$,~$f \in L^\infty(\Omega)$, and~$g \equiv 0$, global~$C^{1, \theta}$-estimates have been obtained in~\cite{BDVV23b,su}.

Like the majority of the results in~\cite{min22}, Theorem~\ref{main:thm} relies crucially on assumption~\eqref{psq}. This requirement ensures that the local operator~$Q_L$ is the leading term in~\eqref{defQ}, making it increasingly prevailing over~$Q_N$ at smaller scales and ultimately becoming the source of regularity. Clearly,~\eqref{psq} is satisfied if~$p = q$, as, for instance, when~$Q u = - \Delta_p u + (-\Delta_p)^s u$. If~$p < s q$, then the leading term becomes~$Q_N$, from which one should not be able to extract more than the global H\"older continuity of solutions---see~\cite{rosoton} and~\cite{ims}. Different is the case of interior regularity, where, in some cases,~$C^{1, \theta}$ estimates are expected. However, to obtain them, one would need to fully understand the regularizing features of~$Q_N$, something which at the moment is still lacking---see~\cite{BLS18} for some of the most relevant results in this direction.

The proof of Theorem~\ref{main:thm} is reminiscent of that of~\cite[Theorem 5]{min22} for the interior H\"older continuity of the gradient and proceeds as follows. After carrying out a suitable flattening of the boundary, we establish Caccioppoli type estimates for the solution~$u$ of problem~\eqref{dirprobforu} near said flat parts of the boundary. Then, in the same spirit of~\cite{gg,lieb88}, we make use of a perturbation argument and compare~$u$ to the solution of a local, autonomous, homogeneous problem in the half-ball, whose gradient regularity is well understood. This allows us to obtain finer estimates on the gradient of~$u$ and, in conjunction with the Caccioppoli estimates of the previous step, to ultimately get boundary Campanato type estimates for~$Du$. Its H\"older regularity is then recovered as a consequence of the Campanato isomorphism.

\vspace{4pt}

Theorem~\ref{main:thm} gives the~$C^{1, \theta}$-regularity of the solution~$u$ of problem~\eqref{dirprobforu} up to the boundary of~$\Omega$, from the interior. However, no matter how nice the outer datum~$g$ is,~$u$ will in general be no more than Lipschitz across the boundary. This can be deduced as a particular consequence of the second result of the present paper, a Hopf type boundary point lemma for the operator~$Q$.

In order to state and prove this result, we need to impose some additional regularity hypotheses on the operators~$Q_L$ and~$Q_N$. Namely, we require that~$A(\cdot, z) \in C^1(\Omega; \R^n)$ for all~$z \in \R^n$ and that
\begin{equation} \label{ass:A2}
|\partial_x A(x, z)| \le \Lambda \left( |z|^2 + \mu^2 \right)^{\frac{p - 2}{2}} |z| \quad \mbox{for all } x \in \Omega, \, z \in \R^n.
\end{equation}
Note that this is a strengthening of the second line in~\eqref{ass:A}. Concerning the operator~$Q_N$, we assume that~$B \in C^{0, 1}(\R^n \times \R^n)$, with
\begin{equation} \label{ass:B2}
\big| {B(x + w, y + z) - B(x, y)} \big| \le \Lambda \big( {|w| + |z|} \big) \quad \mbox{for all~} x, y, w, z \in \R^n,
\end{equation}
and that~$\phi \in C^1(\R \setminus \{ 0 \})$, with
\begin{equation} \label{ass:phi2}
\Lambda^{-1} |t|^{q - 2} \le \phi'(t) \le \Lambda |t|^{q - 2} \quad \mbox{for all } t \in \R \setminus \{ 0 \}.
\end{equation}
Observe that condition~\eqref{ass:phi2} is stronger than~\eqref{ass:phi} (up to taking a different~$\Lambda$), as~$\phi(0) = 0$---recall that~$\phi$ is an odd continuous function.

Having made these additional assumptions, we can now state our Hopf lemma for~$Q$-su\-per\-har\-mo\-nic functions---as before, see Section~\ref{sec:prel} for definitions. Here and in what follows,~$\nu$ denotes the unit normal vector field of~$\partial \Omega$, pointing outwards from~$\Omega$.

\begin{theorem} [Hopf lemma]\label{Hopflemma}
Let~$p, q \in (1, +\infty)$ and~$s \in (0, 1)$. Let~$\Omega \subset \R^n$ be a bounded open set with boundary of class~$C^{1, \alpha}$, for some~$\alpha \in (0, 1)$. Suppose that~$A$,~$B$, and~$\phi$ satisfy assumptions~\eqref{ass:A},~\eqref{ass:B},~\eqref{ass:A2},~\eqref{ass:B2}, and~\eqref{ass:phi2}. Let~$u\in W^{1, p}(\Omega) \cap \W^{s, q}(\Omega) \cap C^0(\overline{\Omega})$ be a non-negative weak supersolution of~$Q u = 0$ in~$\Omega$, positive in~$\Omega$ and vanishing at a point~$x_0 \in \partial \Omega$. Then,
\begin{equation} \label{normderneg}
\liminf_{h \searrow 0} \frac{u(x_0 - h \nu(x_0))}{h} > 0.
\end{equation}
%where~$\nu$ denotes the unit normal vector field of~$\partial \Omega$ pointing outwards from~$\Omega$.
\end{theorem}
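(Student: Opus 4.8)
The plan is to build a strict subsolution (barrier) $v$ of $Q$ in a region $D\subset\Omega$ having $x_0$ on its boundary and tangent to $\partial\Omega$ there, compare it with $u$ via the comparison principle for $Q$, and read off \eqref{normderneg} along the inward normal at $x_0$. A preliminary geometric point needs care: since $\partial\Omega$ is merely $C^{1,\alpha}$ it need not satisfy an interior ball condition, so one cannot take $D$ to be an annulus inside an interior ball tangent to $\partial\Omega$ at $x_0$, as in the classical local proof. Instead, writing $\partial\Omega$ near $x_0$ as a graph $\{x_n=\gamma(x')\}$ with $\gamma\in C^{1,\alpha}$, $\gamma(x_0')=0$, $\nabla\gamma(x_0')=0$, $x_0=(x_0',0)$, $\nu(x_0)=-e_n$, I would take $D$ to be the slab between an interior $C^{1,\alpha}$ graph $\Sigma_0$ (for instance $\{x_n=K|x'-x_0'|^{1+\alpha}\}$ with $K$ large) that lies in $\overline\Omega$, touches $\partial\Omega$ exactly at $x_0$ and is tangent to it there, and a second hypersurface $\Sigma_1$ obtained by translating $\Sigma_0$ upward by a fixed amount, so that $\Sigma_1$ and the lateral boundary of $D$ stay at a fixed positive distance from $\partial\Omega$. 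On $D$ take $v=\epsilon\psi$, where $\psi$ (a Gaussian in the co-normal variable, or $\Psi\circ\rho$ for a defining function $\rho$ of $\Sigma_0$ and a convex profile $\Psi$ such as $\Psi(\tau)=\tau+M\tau^{1+\alpha}$) vanishes on $\Sigma_0\ni x_0$, is positive in $D$ and on $\Sigma_1$, and has positive inward normal derivative at $x_0$; convexity and largeness of the constants are meant to absorb the (possibly unbounded) second derivatives produced by the $C^{1,\alpha}$ geometry.

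The parameter $\epsilon>0$ is chosen so small that $v\le\min u$ on $\Sigma_1$ and on the lateral boundary of $D$ — possible because $u>0$ on these compact sets, which lie at fixed positive distance from $\partial\Omega$ — while $v\le 0\le u$ on $\R^n\setminus\overline D$ and $v(x_0)=0=u(x_0)$; since $\overline D\cap\partial\Omega=\{x_0\}$, this yields $v\le u$ on $\R^n\setminus D$, the only non-strict contact being at $x_0$ itself. Here one uses that $u\ge0$ in $\R^n$, $u>0$ in $\Omega$, and $u(x_0)=0$.

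The heart of the proof is the inequality $Qv\le 0$ in $D$. For the local term the classical barrier computation applies: coercivity in \eqref{ass:A}, used in the normal direction together with the convexity of $\psi$, gives a strictly positive lower bound for $\mathrm{div}\,A(x,Dv)$ that dominates both the tangential terms and, by \eqref{ass:A2}, the term carrying the $x$-dependence of $A$, so that $Q_L v\le -c_0$ in $D$ with $c_0>0$ comparable to a positive power of $\epsilon$. For the nonlocal term I would split $Q_N v(x)=2\,\PV\int_{\R^n}\phi(v(x)-v(y))B(x,y)|x-y|^{-n-sq}\,dy$ at a scale $\varrho$ about $x$: the far part $\{|x-y|\ge\varrho\}$ needs no cancellation and is bounded, using $\|v\|_\infty\le\epsilon\|\psi\|_\infty$, by $C\epsilon^{q-1}\varrho^{-sq}$; on $\{|x-y|<\varrho\}$, where $v$ is smooth and $\nabla v(x)\ne0$, one symmetrises $y\mapsto 2x-y$ and uses the oddness of $\phi$: with $a=v(x)-v(y)$, $b=v(x)-v(2x-y)$ one has $a+b=O(|x-y|^2)$, so, bounding $|\phi(a)+\phi(b)|=|\phi(a)-\phi(-b)|$ via \eqref{ass:phi2} and a split according to the size of $\<\nabla v(x),y-x\>$, and controlling the $y$-dependence of $B$ by \eqref{ass:B2}, one gets $|\phi(a)+\phi(b)|\le C\epsilon^{q-1}|x-y|^{q}$ and hence a near-diagonal contribution $\le C\epsilon^{q-1}\varrho^{q(1-s)}$. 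Since $q(1-s)>0$ always — the only arithmetic constraint that appears, \eqref{psq} playing no role here — fixing $\varrho$ and taking $\epsilon$ (and, if needed, the size of $D$) small makes $Q_N v=O(\epsilon^{q-1})$ of lower order than $|Q_L v|$, whence $Qv\le0$ in $D$.

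By the comparison principle for $Q$, $u\ge v$ in $D$. Since $\partial D$ is tangent to $\partial\Omega$ at $x_0$ and $v$ vanishes on $\Sigma_0\ni x_0$ with positive inward normal derivative there, evaluating along $x_0-h\nu(x_0)\in D$ and letting $h\searrow0$ gives
\[
\liminf_{h\searrow0}\frac{u(x_0-h\nu(x_0))}{h}\ \ge\ \lim_{h\searrow0}\frac{v(x_0-h\nu(x_0))}{h}\ =\ -\<\nabla v(x_0),\nu(x_0)\>\ >\ 0 ,
\]
which is \eqref{normderneg}. The steps I expect to be most troublesome are the near-diagonal control of $Q_N v$ — for $q(1-s)\le1$ the integral defining $Q_N v$ is only conditionally convergent even for smooth $v$, so the bound must be extracted from the oddness of $\phi$ rather than from absolute integrals, and, moreover, the barrier is at best $C^{1,\alpha}$ up to $\partial\Omega$ because the domain is — and, relatedly, carrying out the geometric reduction and choosing the profile so that the local barrier estimate still goes through on a merely $C^{1,\alpha}$ domain.
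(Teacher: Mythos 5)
Your overall architecture (flatten/choose a tangent interior region, build a barrier, compare, read the normal derivative) matches the paper, but two steps at the heart of the argument do not work as claimed. First, the way you couple the two operators is flawed: you prove $Q_Lv\le -c_0$ with $c_0$ ``comparable to a positive power of $\epsilon$'' and $Q_Nv=O(\epsilon^{q-1})$, and then assert that taking $\epsilon$ small makes the nonlocal term of lower order. Since $A$ is $(p-1)$-homogeneous in the gradient when $\mu=0$ (e.g.\ the prototype $-\Delta_p+(-\Delta_q)^s$), one has $Q_L(\epsilon\psi)\sim-\epsilon^{p-1}$ while $Q_N(\epsilon\psi)\sim\epsilon^{q-1}$; so for $p>q$ shrinking $\epsilon$ makes the favorable local term \emph{smaller} relative to the nonlocal one, and for $p=q$ smallness of $\epsilon$ gains nothing. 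The theorem is stated for all $p,q\in(1,\infty)$, so this absorption cannot be salvaged by a sign condition; this is exactly the ``different homogeneities'' obstruction the paper points out, and its proof avoids any balancing by constructing a single profile that is a subsolution of $\widetilde{Q}_L$ and of $\widetilde{Q}_N$ \emph{separately}, uniformly in $\varepsilon\in(0,1]$ (see \eqref{QLclaim} and \eqref{QNclaim}), with a large bump $M$ placed away from the boundary to force negativity of the nonlocal part when $(1-s)q\ge 1$.

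Second, your near-diagonal bound for $Q_Nv$ is not uniform up to the contact point. The symmetrization $a+b=O(|x-y|^2)$ needs a Hessian bound for the barrier on $B_\varrho(x)$, but in a merely $C^{1,\alpha}$ geometry your $\Sigma_0=\{x_n=K|x'-x_0'|^{1+\alpha}\}$ (and hence $\psi=\Psi\circ\rho$) has second derivatives blowing up like $\operatorname{dist}^{\alpha-1}$ precisely along the normal segment at $x_0$ where the conclusion is read off, so the claimed estimate $|\phi(a)+\phi(b)|\le C\epsilon^{q-1}|x-y|^{q}$ with a constant independent of the distance to the boundary fails; moreover, when $(1-s)q\le 1$ an error of size $\operatorname{dist}^{\alpha-1}|x-y|^{-n-sq+2}$ integrates to a quantity that \emph{diverges} as the distance tends to zero, so it cannot be dominated by a fixed $-c_0$. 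The paper's proof confronts this head on: after flattening, the coefficients blow up like $y_n^{\alpha-1}$, the barrier contains $y_n^{1+\beta}$ with $\beta<\alpha$ so that its vertical second derivative blows up faster with the right sign, and for the nonlocal part the near-field term is shown to be \emph{negative} with the quantitative lower bound $\sim \varepsilon^{q-1}r^{-sq}L_0(y_n/r)$ (see \eqref{claimI1}), which dominates the coefficient-freezing errors of size $r^{\alpha}L_\alpha(y_n/r)$ in \eqref{Errest}; your argument, which only aims at smallness of the near-field contribution, has no mechanism to absorb these unbounded errors. The geometric set-up and the final comparison step of your proposal are fine, but without (i) a barrier that is a subsolution of each operator separately (or some other device replacing the $\epsilon$-balancing) and (ii) a genuine treatment of the blow-up of second derivatives near $x_0$ in the regime $(1-s)q\le1$, the proof does not go through.
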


We remark that Theorem~\ref{Hopflemma} holds for every~$p, q \in (1, +\infty)$ and~$s \in (0, 1)$---in particular, assumption~\eqref{psq} is not required here. Indeed, the result is not of perturbative nature and its proof treats both operators~$Q_L$ and~$Q_N$ as equals. In consequence of Theorem~\ref{main:thm}, the linear growth from the boundary implied by~\eqref{normderneg} is optimal when~$p > s q$. We believe it is an interesting question to determine whether a stronger condition might hold when~$p < s q$, such as
$$
\liminf_{h \searrow 0} \frac{u(x_0 - h \nu(x_0))}{h^s} > 0,
$$
in agreement with the Hopf lemmas available for fractional Laplacians---see~\cite{GS16,dpq}.

In its general spirit, the proof of Theorem~\ref{Hopflemma} proceeds similarly to those usually employed to establish Hopf lemmas, via the construction of a suitable positive subsolution. Once this barrier is built, the conclusion then follows from the weak comparison principle---see, e.g., the forthcoming Proposition~\ref{WCPprop}.

A first difficulty that presents to us when building such a barrier comes from the mild regularity assumptions made on the boundary of~$\Omega$, which is only required to be~$C^{1, \alpha}$---in particular, it might not satisfy the interior ball condition. After flattening the boundary through a specific diffeomorphism, this low regularity translates into a transformed operator having coefficients which may blow up near~$x_0$. To overcome this difficulty, we construct an explicit subsolution~$v$ having second derivatives which blow up at a faster rate, with the correct sign. This method is, to the best of our knowledge, rather unexplored even in the case of a single local operator---see~\cite{G33,FG57} for similar approaches. We believe it might be further generalized past the H\"older continuity class and could lead to results for~$C^{1, \Dini}$-regular boundaries, the optimal regularity under which the Hopf lemma holds in the local case---see, e.g.,~\cite{W67,L85,AN16}.

A second difficulty naturally lies in the fact that~$Q$ is the sum of two operators having different scaling and homogeneity properties. In order to circumvent this issue, we actually construct~$v$ in a way that makes it subharmonic for both~$Q_L$ and~$Q_N$ at the same time. As a technical remark, we point out that, to prove that~$v$ is a subsolution of~$Q_N v = 0$ in a neighborhood of~$x_0$, we need both a careful asymptotic analysis of the behavior of the part of~$Q_N v$ localized around~$x_0$ (in the mildly nonlocal regime~$(1- s) q < 1$) and purely nonlocal techniques, adding a large bump function supported away from the boundary as in~\cite{dpq} (in the strongly nonlocal regime~$(1 - s) q \ge 1$).

%Note that assumption \eqref{psq} is not required in Theorem \ref{Hopflemma}. This is consistent with the known results, both for local operators \cite{tol83, vaz}, and for nonlocal operators \cite{chenli, dpq}.

\vspace{4pt}

Clearly, supersolutions of~$Q u = 0$ might not be differentiable and thus the~$\liminf$ in~\eqref{normderneg} might not in general be a limit. Of course, this is true unless the supersolution~$u$ is a priori assumed to be of class~$C^1(\overline{\Omega})$ or if~$u$ is an actual solution of the equation and~\eqref{psq} is in force, thanks to Theorem~\ref{main:thm}.

In the following result, we showcase this last possibility and provide a unified statement which can be easily proved by combining Theorems~\ref{main:thm} and~\ref{Hopflemma} with the weak and the strong maximum principles for~$Q$---see, e.g., the forthcoming Propositions~\ref{WCPprop} and~\ref{SMPprop}.

\begin{corollary}\label{coroll}
Let~$p,q\in (1,+\infty)$ and~$s\in (0,1)$ be such that~\eqref{psq} holds true. Let~$\Omega \subset \R^n$ be a bounded open set with boundary of class~$C^{1, \alpha}$, for some~$\alpha \in (0, 1)$. Suppose that~$A$,~$B$, and~$\phi$ satisfy assumptions~\eqref{ass:A},~\eqref{ass:B},~\eqref{ass:A2},~\eqref{ass:B2}, and~\eqref{ass:phi2}. Let~$f \in L^d(\Omega)$, for some~$d > n$, be a non-negative function and~$u\in W^{1,p}(\Omega) \cap \W^{s,q}(\Omega)$ be the weak solution of
$$
\begin{cases}
Qu = f & \quad \text{in }\Omega, \\
u = 0 & \quad \text{in } \R^n\setminus \Omega.
\end{cases}
$$
Then,~$u\in C^{1,\theta}(\overline{\Omega})$ for some~$\theta\in (0,1)$ depending only on~$n$,~$p$,~$q$,~$s$,~$\Lambda$,~$d$,~$\alpha$,~$\Omega$, and~$\| f \|_{L^d(\Omega)}$. Furthermore, either~$u\equiv 0$ in~$\R^n$, or~$u>0$ in~$\Omega$ and
$$
-\frac{\partial u}{\partial \nu^{-}}(x_0) \coloneqq \lim_{h \searrow 0}\frac{u\big(x_0-h\,\nu(x_0)\big)}{h}>0,
$$
for every~$x_0\in\partial \Omega$.
\end{corollary}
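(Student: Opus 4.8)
The plan is to obtain Corollary~\ref{coroll} as a direct consequence of the three results quoted in its statement---Theorems~\ref{main:thm} and~\ref{Hopflemma}, together with the weak and strong maximum principles of Propositions~\ref{WCPprop} and~\ref{SMPprop}---after checking that the relevant hypotheses are inherited by $u$. First I would fix, once and for all, a bounded open set $\Omega'$ with $\Omega \subset\subset \Omega' \subset \R^n$ depending only on $\Omega$ (say, a large open ball containing $\overline{\Omega}$) and apply Theorem~\ref{main:thm} with outer datum $g \equiv 0$. The constant function $0$ belongs to $\W^{s,q}(\Omega) \cap W^{1,\infty}(\Omega') \cap C^{1,\alpha}(\partial\Omega)$ and all the norms $\|g\|_{\W^{s,q}(\Omega)}$, $\|g\|_{W^{1,\infty}(\Omega')}$, $\|g\|_{C^{1,\alpha}(\partial\Omega)}$ vanish; since~\eqref{psq} is in force, Theorem~\ref{main:thm} gives $u \in C^{1,\theta}(\overline{\Omega})$, with $\theta \in (0,1)$ depending only on $n$, $p$, $q$, $s$, $\Lambda$, $d$, $\alpha$, $\Omega$ (the dependence on $\Omega'$ being absorbed into that on $\Omega$ by our fixed choice of $\Omega'$) and on $\|f\|_{L^d(\Omega)}$. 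This already establishes the first assertion; in particular $u \in C^0(\overline{\Omega})$, and, by the exterior condition together with continuity up to $\partial\Omega$, one has $u = 0$ on $\partial\Omega$.

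Next, since $f \ge 0$, the weak solution $u$ is in particular a weak supersolution of $Q u = 0$ in $\Omega$. Observing that the constant function $0$ solves $Q 0 = 0$---because $A(x, 0) = 0$, as follows by letting $z \to 0$ in the growth bound of~\eqref{ass:A} and using the continuity of $A$, while $\phi(0) = 0$ since $\phi$ is odd---and that $u = 0$ in $\R^n \setminus \Omega$, the weak comparison principle of Proposition~\ref{WCPprop} yields $u \ge 0$ in $\Omega$, hence $u \ge 0$ in $\R^n$. Applying then the strong maximum principle of Proposition~\ref{SMPprop} to this non-negative supersolution of $Q u = 0$, we get the dichotomy: either $u \equiv 0$ in $\Omega$---and therefore $u \equiv 0$ in $\R^n$, by the exterior datum---or $u > 0$ everywhere in $\Omega$.

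It remains to analyze the second alternative. There, $u \in W^{1,p}(\Omega) \cap \W^{s,q}(\Omega) \cap C^0(\overline{\Omega})$ is a non-negative supersolution of $Q u = 0$, positive in $\Omega$, and vanishing at every point $x_0 \in \partial\Omega$. Since all of the assumptions~\eqref{ass:A},~\eqref{ass:B},~\eqref{ass:A2},~\eqref{ass:B2}, and~\eqref{ass:phi2} are in force, Theorem~\ref{Hopflemma} applies at each such $x_0$ and gives $\liminf_{h \searrow 0} u(x_0 - h\,\nu(x_0))/h > 0$. Because $u \in C^1(\overline{\Omega})$ and $u(x_0) = 0$, the difference quotient $u(x_0 - h\,\nu(x_0))/h = \big( u(x_0 - h\,\nu(x_0)) - u(x_0) \big)/h$ converges, as $h \searrow 0$, to the one-sided directional derivative $-\langle D u(x_0), \nu(x_0)\rangle$; consequently the $\liminf$ is an honest limit, it equals $-\frac{\partial u}{\partial \nu^-}(x_0)$, and it is positive. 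This is precisely the asserted conclusion.

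Since every ingredient is already available, the argument presents no genuine obstacle; it is essentially a matter of bookkeeping. The only points deserving care are (i) making the regularity exponent $\theta$ of Theorem~\ref{main:thm} depend on $\Omega$ alone rather than on the auxiliary set $\Omega'$, which is achieved by fixing $\Omega'$ canonically in terms of $\Omega$; and (ii) verifying that $u$ legitimately inherits the hypotheses of Propositions~\ref{WCPprop} and~\ref{SMPprop} and of Theorem~\ref{Hopflemma}---namely, membership in the appropriate function spaces, continuity up to $\overline{\Omega}$, the correct exterior datum, and the supersolution property---all of which follow from the $C^{1,\theta}(\overline{\Omega})$ regularity just established and from $f \ge 0$ and $g \equiv 0$.
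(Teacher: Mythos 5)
Your argument is correct and is exactly the route the paper indicates for this corollary (which it does not prove in detail): Theorem~\ref{main:thm} with $g\equiv 0$ for the $C^{1,\theta}(\overline{\Omega})$ regularity, Proposition~\ref{WCPprop} against the zero solution (using $A(x,0)=0$ and $\phi(0)=0$) to get $u\ge 0$, Proposition~\ref{SMPprop} for the dichotomy, and Theorem~\ref{Hopflemma} combined with the $C^1$ regularity up to the boundary to turn the $\liminf$ into the limit $-\partial u/\partial\nu^-(x_0)>0$. Your two points of care---fixing $\Omega'$ in terms of $\Omega$ and verifying the inherited hypotheses---are handled correctly, so there is nothing to add.
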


\vspace{4pt}

The rest of the paper is organized as follows.

In Section~\ref{sec:prel} we state a few definitions and explain some non-standard notation used throughout the paper. Section~\ref{sec:thm} is devoted to the proof of our regularity result, Theorem~\ref{main:thm}. In Section~\ref{sec:weak} we establish a weak comparison principle for the operator~$Q$, which will be used in Section~\ref{hopfsec} to prove Theorem~\ref{Hopflemma}. Section~\ref{sec:smp} contains a strong maximum principle for~$Q$ which, as customary, follows almost immediately from the Hopf boundary point principle of Theorem~\ref{Hopflemma}.

The paper is closed by a couple of technical appendices: Appendix~\ref{app:Linfty} is dedicated to the proof of an~$L^\infty$ estimate used within the proof of Theorem~\ref{main:thm}, while Appendix~\ref{app:Hextproof} contains the computations needed for an extension lemma in the proof of Theorem~\ref{Hopflemma}.

\section{Notation and definitions} \label{sec:prel}

\noindent
In this preliminary section, we collect a few basic definitions and fix some of the terminology that we will use in the rest of the paper. We assume that~$p, q \in (1, +\infty)$,~$s \in (0, 1)$, and that~$\Omega \subset \R^n$ is a bounded open set with Lipschitz boundary.

\begin{itemize}[leftmargin=*]
\itemsep3pt
\item We denote by~$W^{1, p}(\Omega)$ the usual Sobolev space of~$L^p(\Omega)$ weakly differentiable functions having weak gradients in~$L^p(\Omega)$. We endow~$W^{1, p}(\Omega)$ with the usual norm
$$
\| u \|_{W^{1, p}(\Omega)} \coloneqq \| u \|_{L^p(\Omega)} + \| \nabla u \|_{L^p(\Omega)}.
$$
Given~$g \in W^{1, p}(\Omega)$, we indicate with~$W^{1, p}_g(\Omega)$ the subset of~$W^{1, p}(\Omega)$ made up by those function whose traces on~$\partial \Omega$ coincide with that of~$g$. Writing
\begin{align*}
\mathscr{C}_\Omega \coloneqq \, & \big( {\R^n \times \R^n} \big) \setminus \big( {\left( \R^n \setminus \Omega \right) \times \left( \R^n \setminus \Omega \right)} \big) \\
= \, & \big( {\Omega \times \Omega} \big) \cup \big( {\Omega \times \left( \R^n \setminus \Omega \right)} \big) \cup \big( {\left( \R^n \setminus \Omega \right) \times \Omega} \big),
\end{align*}
we define~$\W^{s, q}(\Omega)$ to be the set of measurable functions~$u: \R^n \to \R$ such that~$u|_{\Omega} \in L^q(\Omega)$ and the map~$(x, y) \mapsto |x - y|^{- n -  s q} |u(x) - u(y)|^q$ is integrable over~$\mathscr{C}_\Omega$. We norm this space by
$$
\| u \|_{\W^{s, q}(\Omega)} \coloneqq \| u \|_{L^q(\Omega)} + \left( \iint_{\mathscr{C}_\Omega} \frac{|u(x) - u(y)|^q}{|x - y|^{n + s q}} \, dx dy \right)^{\! \frac{1}{q}}.
$$
Also, given~$g \in \W^{s, q}(\Omega)$, we denote by~$\W_g^{s, q}(\Omega)$ the space composed by all functions in~$\W^{s, q}(\Omega)$ which agree with~$g$ outside of~$\Omega$.

\item Let~$Q$ be as in the introduction. Given~$g \in W^{1, p}(\Omega) \cap \W^{s, q}(\Omega)$ and~$f \in L^n(\Omega)$, we say that a function~$u \in W^{1, p}_g(\Omega) \cap \W_g^{s, q}(\Omega)$ is a weak solution of the Dirichlet problem~\eqref{dirprobforu} if
\begin{equation} \label{weakDirprob}
\begin{aligned}
& \int_\Omega A(x, Du(x)) \cdot D\varphi(x) \, dx \\
& \hspace{30pt} + \iint_{\mathscr{C}_\Omega} \phi \big( {u(x) - u(y)} \big) \big( {\varphi(x) - \varphi(y)} \big) \frac{B(x, y)}{|x - y|^{n + s q}} \, dx dy = \int_\Omega f \varphi \, dx,
\end{aligned}
\end{equation}
for every~$\varphi \in W^{1, p}_0(\Omega) \cap \W^{s, q}_0(\Omega)$. Moreover, given two functions~$u, v \in W^{1, p}(\Omega) \cap \W^{s, q}(\Omega)$, we say that~$Q u \le Q v$ in~$\Omega$ in the weak sense if
\begin{equation} \label{QuleQvweak}
\begin{aligned}
& \int_{\Omega} \Big( {A(x, Du(x)) - A(x, Dv(x))} \Big) \cdot D\varphi(x) \, dx \\
& \hspace{10pt} + \iint_{\mathscr{C}_\Omega} \Big( {\phi \big( {u(x) - u(y)} \big) - \phi \big( {v(x) - v(y)} \big)} \Big) \left( \varphi(x) - \varphi(y) \right) \frac{B(x, y)}{|x - y|^{n + s q}} \, dx dy \le 0,
\end{aligned}
\end{equation}
for every non-negative function~$\varphi \in W^{1, p}_0(\Omega) \cap \W^{s, q}_0(\Omega)$. By taking respectively~$v \equiv 0$ or~$u \equiv 0$ in the above formulation, we obtain the definition of weak sub- and superharmonic functions for the operator~$Q$ in~$\Omega$, i.e., of weak sub- and supersolutions of~$Q u = 0$ in~$\Omega$. We stress that the left-hand sides of~\eqref{weakDirprob} and~\eqref{QuleQvweak} are well-defined and finite thanks to assumptions~\eqref{ass:A},~\eqref{ass:B},~\eqref{ass:phi} on~$A$,~$B$,~$\phi$, while the finiteness of the right-hand side  of~\eqref{weakDirprob} follows from the embedding of~$W^{1, p}(\Omega)$ into~$L^{\frac{n}{n - 1}}(\Omega)$.

\item Throughout the paper (and mostly in Section~\ref{hopfsec}), we use the prime notation to indicate~$(n - 1)$-dimensional quantities. For instance:~$x'$ denotes the first~$n - 1$ entries of a point~$x \in \R^n$, so that~$x = (x', x_n)$; we write~$B'_r(x_0')$ to denote the~$(n - 1)$-dimensional open ball of radius~$r > 0$ centered at~$x_0' \in \R^{n - 1}$---when the center is omitted, the ball is assumed to be centered at the origin, i.e.,~$B_r' := B_r'(0')$; the notation~$D'$ stands for the gradient of a function defined in (a subset of)~$\R^{n - 1}$, as well as for the horizontal gradient of a function~$H: \Omega \subset \R^n \to \R$, i.e.,~$DH = (D'H, \partial_{x_n} H)$.

\item In the next sections, we denote by~$C$ a constant greater than~$1$ and possibly changing from line to line. Unless otherwise specified, when it appears inside a proof it is assumed to depend on the quantities listed in the corresponding statement.

%\item \todo{NOTATION FOR CONSTANTS!!!  Aggiunta una possibile notazione}
%
%Throughout the rest of the paper, we will denote by $C$ a positive constant, possibly changing from line to line, depending on the main parameters of the problem stated in Theorem \ref{main:thm}, namely ~$n$,~$p$,~$q$,~$s$,~$\L$,~$d$,~$\alpha$,~$\Omega$,~$\Omega'$,~$\| f \|_{L^d(\Om)}$,~$\| g \|_{\W^{s, q}(\Omega)}$, and~$\| g \|_{C^{1, \alpha}(\Omega')}$. Unless explicitly stated, other constants appearing in the remainder of the paper will be implicitly assumed to depend on such parameters. When necessary, additional dependence on other parameters will be emphasized.
\end{itemize}

\section{Proof of Theorem~\ref{main:thm}}\label{sec:thm}

\noindent
This section is devoted to the proof of Theorem~\ref{main:thm}. In order to make the exposition clearer, we divide it in a few steps.

\subsection*{Step 1: Reduction to nicer outside data}

In this first preliminary step we show that, without loss of generality, the outside datum can be assumed to be compactly supported, of class~$C^{1, \alpha}$ on~$\partial \Omega$, and globally Lipschitz. In order to do this, we first establish the following global~$L^\infty$ estimate for the solution~$u$. We stress that here assumption~\eqref{psq} is not required to hold.

\begin{lemma} \label{Linftyestlem}
Let~$\Omega \subset \subset \Omega' \subset \R^n$ be bounded open sets with~$\partial \Omega$ Lipschitz. Given~$f \in L^n(\Omega)$ and~$g \in W^{1, p}(\Omega) \cap \W^{s, q}(\Omega) \cap L^\infty(\Omega')$, let~$u \in W^{1, p}_g(\Omega) \cap \W^{s, q}_g(\Omega)$ be a weak solution of problem~\eqref{dirprobforu}. Then,~$u \in L^\infty(\Omega)$ and it holds
$$
\| u \|_{W^{1, p}(\Omega)} + \| u \|_{L^\infty(\Omega)} \le C
$$
for some constant~$C > 0$ depending only on~$n$,~$p$,~$q$,~$s$,~$\Lambda$,~$\Omega$, and~$\Omega'$, as well as on~$\| f \|_{L^n(\Omega)}$, $\| g \|_{W^{1, p}(\Omega)}$, $\| g \|_{\W^{s, q}(\Omega)}$, and~$\| g \|_{L^\infty(\Omega')}$.
\end{lemma}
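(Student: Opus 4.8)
The plan is to obtain the $W^{1,p}$ bound and the $L^\infty$ bound in that order, both via energy testing in the weak formulation~\eqref{weakDirprob}, combined with a De Giorgi / Stampacchia truncation argument for the sup bound. For the $W^{1,p}$ estimate, I would test~\eqref{weakDirprob} with $\varphi = u - g \in W^{1,p}_0(\Omega) \cap \W^{s,q}_0(\Omega)$, which is an admissible test function since $u \in W^{1,p}_g(\Omega) \cap \W^{s,q}_g(\Omega)$. The coercivity in the third line of~\eqref{ass:A} gives a lower bound of the form $\Lambda^{-1} \int_\Omega (|Du|^2 + \mu^2)^{(p-2)/2} |Du - Dg|^2 \, dx$ for the local term (after adding and subtracting $A(x,Dg)\cdot D\varphi$ and using the growth bound in the first line of~\eqref{ass:A} on the crossed piece), while the monotonicity of $\phi$ — which follows from~\eqref{ass:phi}, since $\phi(t)t \ge \Lambda^{-1}|t|^q \ge 0$ forces $\phi$ nondecreasing and in fact $(\phi(a)-\phi(b))(a-b)\ge 0$ — makes the nonlocal term with $\varphi = u-g$ nonnegative up to the contribution of the outside datum $g$, which is under control because $g \in \W^{s,q}(\Omega)$. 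The right-hand side $\int_\Omega f(u-g)\,dx$ is handled by the embedding $W^{1,p}(\Omega)\hookrightarrow L^{n/(n-1)}(\Omega)$ together with $f \in L^n(\Omega)$ and Young/Poincaré; this closes the estimate for $\|Du\|_{L^p(\Omega)}$, and then $\|u\|_{L^p(\Omega)}$ follows from Poincaré since $u - g$ has zero trace. One subtlety: when $1 < p < 2$ the weight $(|Du|^2+\mu^2)^{(p-2)/2}$ degenerates for large $|Du|$, so the passage from the quadratic-weighted quantity to $\int |Du-Dg|^p$ requires the standard algebraic inequalities relating $(|a|^2+\mu^2)^{(p-2)/2}|a-b|^2$ to $|a-b|^p$; this is routine but must be done carefully in both regimes $p \ge 2$ and $p < 2$.

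For the $L^\infty$ bound I would run a Stampacchia iteration. Set $M := \|g\|_{L^\infty(\Omega')}$ and, for $k \ge M$, test~\eqref{weakDirprob} with $\varphi = (u-k)_+$, which vanishes on $\R^n\setminus\Omega$ (there $u=g\le M \le k$) and lies in $W^{1,p}_0(\Omega)\cap\W^{s,q}_0(\Omega)$. The nonlocal term is again nonnegative: writing $w = u-k$, one checks $\big(\phi(w(x)+k - w(y) - k)\big)\big(w_+(x)-w_+(y)\big)B(x,y)|x-y|^{-n-sq} \ge 0$ pointwise on $\mathscr C_\Omega$ by monotonicity of $\phi$ (the standard truncation inequality, splitting into the cases according to the signs of $w(x),w(y)$), so it can be discarded. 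The local term produces $\Lambda^{-1}\int_{A_k}(|Du|^2+\mu^2)^{(p-2)/2}|Du|^2\,dx$ where $A_k := \{u>k\}\cap\Omega$, which dominates $c\,\|D(u-k)_+\|_{L^p(A_k)}^p$ minus a $\mu$-dependent measure term; against this we have $\int_{A_k} f\,(u-k)_+\,dx \le \|f\|_{L^n(\Omega)} \|(u-k)_+\|_{L^{n/(n-1)}(A_k)}$. Using Sobolev on $(u-k)_+$ and Hölder to reintroduce $|A_k|$ with a positive power, one arrives at the decay inequality
\[
\int_{A_k} (u-k)\,dx \le C\, |A_k|^{1+\varepsilon}\,(h-k)^{-\gamma}, \qquad M \le k < h,
\]
for suitable $\varepsilon,\gamma>0$ (the exact exponents depend on $n,p,d$, but here $f\in L^n$ suffices for boundedness, with the better $L^d$, $d>n$, only needed later for the $C^{1,\theta}$ statement). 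Stampacchia's lemma then yields $u \le M + C$ a.e., and the symmetric argument with $(u+k)_-$ or with $-u$ gives the lower bound; tracking constants shows the dependence is only on the quantities listed.

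The main obstacle I anticipate is not any single step in isolation but the bookkeeping needed to make the nonlocal term genuinely harmless in both tests while still extracting a clean constant: one must verify that the "tail" contributions coming from the outside datum $g$ on $\Omega\times(\R^n\setminus\Omega)$ and $(\R^n\setminus\Omega)\times\Omega$ are absorbed into $\|g\|_{\W^{s,q}(\Omega)}$ and $\|g\|_{L^\infty(\Omega')}$ rather than spoiling the coercivity — in the $W^{1,p}$ estimate because $\varphi = u-g$ is supported where $u\ne g$, and in the Stampacchia step because $(u-k)_+$ is supported in $\Omega$ so the nonlocal integrand's sign can be controlled pointwise. A secondary nuisance is the degeneracy/singularity of the $p$-structure for $p\ne 2$, which forces the use of the usual vector inequalities for $(|z|^2+\mu^2)^{(p-2)/2}z$ in passing between the natural energy and honest $L^p$ norms of the gradient; with $\mu\in[0,1]$ fixed this is standard. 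Everything else — Poincaré, Sobolev, Young, and the Stampacchia iteration lemma — is classical, so once the two testings are set up correctly the estimate follows.
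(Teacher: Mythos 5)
Your $W^{1,p}$ step is essentially the paper's: test with $\varphi = u-g$, use the coercivity/growth in \eqref{ass:A} for the local term, \eqref{ass:B}--\eqref{ass:phi} plus weighted Young for the nonlocal term, and Sobolev--Poincar\'e for $\int_\Omega f(u-g)$. That part is fine.

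The genuine gap is in the $L^\infty$ step, where you assert that $\varphi=(u-k)_+$ ``vanishes on $\R^n\setminus\Omega$ (there $u=g\le M\le k$)'' with $M=\|g\|_{L^\infty(\Omega')}$, and that consequently the nonlocal term is pointwise nonnegative on $\mathscr{C}_\Omega$ and can be discarded. This is false as stated: $g$ is only assumed bounded on $\Omega'$, not on $\R^n\setminus\Omega$, so outside $\Omega'$ one may well have $u=g>k$ on a set of positive measure. Two things then break. First, $(u-k)_+$ taken globally need not vanish outside $\Omega$, so it is not an admissible test function in $W^{1,p}_0(\Omega)\cap\W^{s,q}_0(\Omega)$; one must use $(u-k)_+\chi_\Omega$, as the paper does. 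Second, and more importantly, with this cut-off test function the nonlocal integrand is \emph{not} signed: for $x\in\Omega$ with $u(x)>k$ and $y\in\R^n\setminus\Omega'$ with $g(y)>u(x)$, the contribution is $\phi\big(u(x)-u(y)\big)\big(u(x)-k\big)<0$, so the nonlocal term cannot simply be thrown away. The paper handles exactly this tail: in \eqref{nonlocallb} the bad part is isolated as
\begin{equation*}
\int_{\Omega_k}\bigg(\int_{\R^n\setminus\Omega'}\frac{(u(x)-k)\,\big(g(y)-u(x)\big)_+^{q-1}}{|x-y|^{n+sq}}\,dy\bigg)dx
\le C\,\|(u-k)_+\|_{L^1(\Omega)}\int_{\R^n}\frac{|g(y)|^{q-1}}{(1+|y|)^{n+sq}}\,dy,
\end{equation*}
and the last integral is finite and controlled by $\|g\|_{L^q(\Omega')}$ and the Gagliardo seminorm of $g$ via \eqref{tailest} (which uses \eqref{bounds:omega}); this tail is then built into the starting level $M_1$ so that the negative contribution is absorbed as $-C\,k^{p-1}\|(u-k)_+\|_{L^1(\Omega)}$ in the iteration. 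Without this ingredient your decay inequality does not close, since the would-be-discarded term can be arbitrarily large relative to the quantities you track. The rest of your scheme (local coercivity, $f\in L^n$ via Sobolev, De Giorgi/Stampacchia iteration, symmetric argument for the lower bound, the separate supercritical case $p>n$) matches the paper once this tail term is reinstated and accounted for in the constants.
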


The proof of this result is somewhat standard---it is similar, for instance, to that of~\cite[Proposition~2.1]{min22}. We thus postpone it to Appendix~\ref{app:Linfty}.

Let~$\Omega'' \subset \subset \Omega'$ be an open set with~$\Omega \subset \subset \Omega''$ and~$\eta \in C^\infty_c(\R^n)$ be a smooth cutoff function satisfying~$0 \le \eta \le 1$ in~$\R^n$,~$\eta = 1$ in~$\Omega''$, and~$\supp(\eta) \subset \subset \Omega'$. Set~$\hat{g} \coloneqq \eta g$ and~$\hat{u} \coloneqq \eta u$. Then,~$\hat{u}$ is a weak solution of
$$
\begin{cases}
Q \hat{u} = \hat{f} & \quad \mbox{in } \Omega, \\
\hat{u} = \hat{g} & \quad \mbox{in } \R^n \setminus \Omega,
\end{cases}
$$
where~$\hat{f} = f + \bar{f}$, with
$$
\bar{f}(x) \coloneqq 2 \int_{\R^n \setminus \Omega''} \Big( {\phi \big( {u(x) - \eta(y) g(y)} \big) - \phi \big( {u(x) - g(y)} \big) } \Big) \frac{B(x, y)}{|x - y|^{n + s q}} \, dy \quad \mbox{for } x \in \Omega.
$$

We have that~$\bar{f}\in L^\infty(\Omega)$. To see it, we first observe that, since the Hausdorff distance $\dist(\Om,\,\R^n\setminus\Om'')$ is strictly positive and~$\Om$ is bounded, there exists a constant~$C \ge 1$ depending only on~$\Om$ and~$\Om''$ such that
\begin{equation}\label{bounds:omega}
    C^{-1}(1+|y|)\leq |x-y|\leq C(1+|y|)\quad\text{for all $x\in \Om$ and $y\in \R^n\setminus\Om''$.}
\end{equation}
Using assumptions~\eqref{ass:B}-\eqref{ass:phi}, \eqref{bounds:omega}, and H\"older's inequality, we easily compute
\begin{align*}
\| \bar{f} \|_{L^\infty(\Omega)} & \le C \,  \sup_{x \in \Omega} \left( \int_{\R^n \setminus \Omega''} \frac{|u(x)|^{q - 1} + |g(y)|^{q - 1}}{|x - y|^{n + s q}} \, dy \right) \\
&\le C \left\{ \| u \|_{L^\infty(\Omega)}^{q-1}+ \dashint_{\Omega} \left( \int_{\R^n\setminus\Omega''}\frac{|g(y)|^{q-1}}{(1+|y|)^{n+sq}} \, dy \right) dz \right\}
\\
%& \le C \left\{ \| u \|_{L^\infty(\Omega)}^{q-1}+\int_\Omega |g(x)|^{q-1}dx+\int_\Omega\int_ {\R^n\setminus \Omega''}\frac{|g(x)-g(y)|^{q-1}}{|x-y|^{n+sq}}dydx \right\} \\
& \le C \left\{ \| u \|_{L^\infty(\Omega)}^{q-1}+ \| g \|_{L^q(\Omega)}^{q-1} + \left( \iint_{\mathscr{C}_\Omega} \frac{|g(z) - g(y)|^q}{|z - y|^{n + s q}} \, dz dy \right)^{\! \frac{q-1}{q}} \right\},
\end{align*}
for some~$C \ge 1$ depending only on~$n$,~$q$,~$s$,~$\Lambda$,~$\Omega$, and~$\Omega''$. Thus,~$\tilde{f}$ is bounded in~$\Omega$.

Also notice that~$\hat{g} \in C^{1, \alpha}(\partial \Omega) \cap W^{1,\infty}(\R^n)$ and thus, since~$\supp(\hat{g}) \subset \Omega'$, that~$\hat{g} \in W^{a,\chi}(\R^n)$ for all~$a \in (0, 1)$ and~$\chi \ge 1$, with corresponding norms in these spaces bounded only in terms of~$\|g\|_{W^{1,\infty}(\Omega')}$ and~$\| g \|_{C^{1, \alpha}(\partial\Omega)}$.

\subsection*{Step 2: Straightening of the boundary}

We now proceed with the actual proof of Theorem~\ref{main:thm}. To do this, it is convenient to locally straighten the boundary around any given point~$x_0 \in \partial \Omega$. Following the argument of~\cite[Section~5]{min22} (and mostly adopting its notation), we see that there exists a global~$C^{1,\alpha}$-diffeomorphism~$\mathcal{T}$ of~$\rn$ such that
\begin{equation*}
\begin{split}
    \mathcal{T}(x_0)=x_0, & \qquad B_{r_0}^+(x_0)\subset\mathcal{T} \big( {\Om_{3r_0}(x_0)} \big) \subset B_{4r_0}^+(x_0),
    \\
     \G_{r_0}(x_0)&\subset \mathcal{T} \big( {\partial \Om\cap B_{3r_0}(x_0)} \big)\subset\G_{4r_0}(x_0), 
\end{split}
\end{equation*}
for some small radius~$r_0 \in (0, 1]$. Here~$\Omega_r(x_0)=\Omega\cap B_r(x_0)$ and~$\Gamma_r(x_0)=B_r(x_0)\cap \{x_n=0\}$. Write~$\S \coloneqq \T^{-1}$ and~$\mathfrak{c} \coloneqq \big|{\mathcal{J}_{\S}}\big|$, with~$\mathcal{J}_{\S}$ denoting the Jacobian determinant of the inverse~$\S$. Let~$\widetilde{\Om} \coloneqq \mathcal{T}(\Om)$,~$\tg \coloneqq g\circ \S$,~$\tf \coloneqq \mathfrak{c} (f\circ\S)$, and
\begin{equation} \label{utildedef}
\tu \coloneqq u \circ \S.
\end{equation}
It is easy to see that~$\tf \in L^d(\widetilde{\Om})$,~$\tg \in W^{1,\infty}(\R^n)\cap C^{1,\a}(\Gamma_{r_0}(x_0))$, and~$\tu \in W^{1, p}_{\tg}(\widetilde{\Om}) \cap \W_{\tg}^{s, q}(\widetilde{\Om})$. Moreover,~$\tu$ is a weak solution of
\begin{equation} \label{Dirprobforutilde}
\begin{cases}- \dive \widetilde{A}(\cdot, D\tu) + \widetilde{Q}_N \, \tu = \tf & \quad \mbox{in } \widetilde{\Om}, \\
\tu = \tg & \quad \mbox{in } \R^n \setminus \widetilde{\Om},
\end{cases}
\end{equation}
where
$$
\widetilde{A}(x, z) \coloneqq \mathfrak{c}(x)\, A \big( {\S(x), z (D\T \circ \S)(x)} \big)\,\big(D\T \circ \S\big)(x)^T
$$
and
$$
\widetilde{Q}_N \, u(x) \coloneqq 2 \, \PV \int_{\R^n} \phi \left( u(x) - u(y) \right) \widetilde{K}(x, y) \, dy,
$$
with
$$
\widetilde{K}(x, y) \coloneqq \mathfrak{c}(x) \mathfrak{c}(y) \, \frac{B \big( {\S(x), \S(y)} \big)}{|\S(x) - \S(y)|^{n + s q}}.
$$
From assumptions~\eqref{ass:A}-\eqref{ass:B} and the regularity of~$\T$, we infer that
$$
\begin{dcases}
	0<\widetilde{\L}^{-1}\leq \mathfrak{c}(x)\leq \widetilde{\L} & \quad \mbox{for all } x\in\rn,
	\\
	|\mathfrak{c}(x)-\mathfrak{c}(y)|\leq \widetilde{\L} |x-y|^{\alpha} & \quad \mbox{for all } x, y\in B_{r_0}(x_0),
	\\
	\widetilde{K}(x,y) = \widetilde{K}(y, x) & \quad \mbox{for a.a.~} x, y\in\rn,
	\\
	\frac{\widetilde{\L}^{-1}}{|x-y|^{n+s\g}}\leq \widetilde{K}(x,y)\leq \frac{\widetilde{\L}}{|x-y|^{n+s\g}}& \quad \mbox{for a.a.~} x, y\in\rn.
\end{dcases}
$$
and that the~$p$-growth and coercivity conditions are preserved, namely
\begin{equation}\label{ass:fbd}
\begin{dcases}
|\widetilde{A}(x, z)| + |z| |\partial_z \widetilde{A}(x, z)| \le \widetilde{\L} \left( |z|^2 + \mu^2 \right)^{\frac{p - 2}{2}} |z| & \mbox{for all } x \in B_{r_0}^+(x_0),
\\
\big|\widetilde{A}(x,z) - \widetilde{A}(y,z) \big|\leq \widetilde{\L} (|z|^2+\mu^2)^{\frac{p - 1}{2}} |x-y|^{\alpha} & \mbox{for all } x, y \in B_{r_0}^+(x_0), \\
\langle \partial_z \widetilde{A}(x, z) \xi, \xi \rangle \ge \widetilde{\L}^{-1} \left( |z|^2 + \mu^2 \right)^{\frac{p - 2}{2}} |\xi|^2 & \mbox{for all } x \in B_{r_0}^+(x_0), \,  \xi\in\rn,
\end{dcases}
\end{equation}
for every~$z \in \R^n \setminus \{ 0 \}$ and for some constant~$\widetilde{\L} \ge 1$ depending only on~$n$,~$p$,~$\alpha$,~$\Lambda$, and~$\Omega$.

\subsection*{Step 3: Preliminary estimates on~\texorpdfstring{$\tu$}{}}

To prove Theorem~\ref{main:thm}, we need a few lower order estimates on~$\tu$, which mostly follow from the results of~\cite{min22}. In order to obtain them, we first need to introduce the following~``Caccioppoli'' control quantity.

Given any point~$\tx_0\in \Gamma_{r_0/2}(x_0)$, radius~$\vrho \in \left( 0, \frac{r_0}{4} \right]$, and constants~$a,\chi,\gamma$ satisfying
\begin{equation}\label{pr:costs}
	a\in (0,1), \quad \gamma >\max\{p,n\}, \quad \chi>\g, \quad a \chi > n,
\end{equation}
we define
\begin{equation}\label{caccio}
\begin{aligned}
\ccp^+_{\gamma,a,\chi}(\vrho)
% = \ccp^+_{\gamma,a,\chi}(\tu,B_\vrho(\tx_0)) \\
& \coloneqq \vrho^{-p}\dashint_{B^+_\vrho(\tx_0)}|\tu-\tg|^p\,dx+\int_{\rn\sm B_\vrho(\tx_0)}\frac{|\tu(y)-(\tu)_{B_\vrho(\tx_0)}|^\g}{|y-\tx_0|^{n+ s \g}}\,dy
\\
& \quad\, + \left( \|\tf\|_{L^n(B^+_\vrho(\tx_0))}^{\frac{p}{p - 1}} + 1 \right) + \bigg( {\dashint_{B^+_\vrho(\tx_0)}|D\tg|^\gamma \,dx} \bigg)^{p/\gamma}
\\
& \quad\, + \bigg( {\vrho^{\chi(a-s)}\int_{B_\vrho(\tx_0)}\dashint_{B_\vrho(\tx_0)}\frac{|\tg(x)-\tg(y)|^\chi}{|x-y|^{n+a \chi}}\,dxdy} \bigg)^{\g/\chi}.
\end{aligned}
\end{equation}

We then have the following preliminary estimates. From now on, we assume the validity of condition~\eqref{psq} and all constants to depend on the quantities declared in the statement of Theorem~\ref{main:thm}.
%
%first, the following H\"older regularity result \cite[Theorem 5, Proposition 5.1]{min22} holds true:
%\begin{Theorem}
%    Under the assumption of Theorem \ref{main:thm}, any solution $\tu$ of \eqref{bdry:minim} belongs to $C^{0,\b}(\rn)$ for every $\b\in (0,1)$. Moreover there exists a constant $c=c(data,\b)$ such that
%    \begin{equation}\label{holder}
%        [\tu]_{C^{0,\b}(\rn)}\leq c\,.
%    \end{equation}
%\end{Theorem}
%
%
%

\begin{lemma}
The function~$\tu$ defined by~\eqref{utildedef} belongs to~$C^\beta(\R^n)$ for every~$\beta \in (0, 1)$ and it holds
\begin{equation}\label{holder}
	\| \tu \|_{C^{\b}(\R^n)}\leq C_\beta.
\end{equation}
Moreover, it satisfies
\begin{align}
	\label{ee1}
	\int_{B_t(\tx_0)}\dashint_{B_t(\tx_0)}\frac{|\tu(x)-\tu(y)|^\g}{|x-y|^{n+s \g}}\,dxdy\leq C_\beta \, t^{(\b-s) \g} & \quad \mbox{for all } \beta \in (s, 1) \mbox{ and } t \in \left( 0, \frac{r_0}{4} \right), \\
	\label{ee2}
	\int_{\rn\sm B_t(\tx_0)}\frac{|\tu(y)-(\tu)_{B_t(\tx_0)}|^\g}{|y-\tx_0|^{n+s \g}}\,dy\leq C  & \quad \mbox{for all } t \in \left( 0, \frac{r_0}{4} \right), \\
	\label{ee3}
	\dashint_{B^+_{\vrho/2(\tx_0)}}\big( |D\tu|^2+\mu^2\big)^{p/2}\,dx\leq C_\lambda \, \vrho^{-\l\,p} & \quad \mbox{for all } \lambda > 0 \mbox{ and } \vrho \in \left(0, \frac{r_0}{4}\right).
\end{align}
The constant~$C_\beta$ may also depend on~$\beta$, while~$C_\lambda$ also on~$\lambda$.
\end{lemma}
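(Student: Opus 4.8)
The plan is to establish~\eqref{holder} first, since~\eqref{ee1} and~\eqref{ee2} will then follow from it by elementary integral estimates, while~\eqref{ee3} will be obtained by combining these with the boundary Caccioppoli inequality of~\cite{min22}. To prove~\eqref{holder}, I would note first that Lemma~\ref{Linftyestlem}, applied to the problem with the data reduced in Step~1 and transported through the bi-Lipschitz map~$\S$, yields~$\|\tu\|_{L^\infty(\R^n)}\le C$ (recall that~$\tu=\tg$ outside~$\widetilde{\Om}$ and that~$\tg$ is bounded with compact support). Since~\eqref{psq} holds, the interior~$C^{1,\theta}_\loc$ estimate together with the global almost-Lipschitz regularity up to the boundary of~\cite{min22}---available because the structure conditions~\eqref{ass:fbd} are in force and the datum is, after Step~1, globally Lipschitz and of class~$C^{1,\alpha}$ on~$\partial\Om$---give~$u\in C^\beta(\ov{\Om})$ for every~$\beta\in(0,1)$, with norm controlled in terms of~$\beta$ and of the quantities in Theorem~\ref{main:thm}. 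As~$u$ coincides outside~$\Om$ with a~$W^{1,\infty}(\R^n)$ function and~$\partial\Om$ is of class~$C^{1,\alpha}$, a routine gluing argument---estimating~$|u(x)-u(y)|$ through a point of~$\partial\Om$ on the segment joining~$x$ and~$y$---upgrades this to~$u\in C^\beta(\R^n)$; composing with~$\S$, which is globally bi-Lipschitz (being, e.g., the identity outside a bounded set), then gives~\eqref{holder}.

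Granted~\eqref{holder} and the global~$L^\infty$ bound, estimate~\eqref{ee1} is immediate: for~$\beta\in(s,1)$, bounding~$|\tu(x)-\tu(y)|^\g\le C_\beta|x-y|^{\beta\g}$ and using~$n+s\g-\beta\g<n$, one gets
$$
\int_{B_t(\tx_0)}\dashint_{B_t(\tx_0)}\frac{|\tu(x)-\tu(y)|^\g}{|x-y|^{n+s\g}}\,dxdy
\leq C_\beta\int_{B_t(\tx_0)}\dashint_{B_t(\tx_0)}\frac{dxdy}{|x-y|^{n-(\beta-s)\g}}
\leq C_\beta\,t^{(\beta-s)\g}.
$$
For~\eqref{ee2}, I would fix~$\beta=\tfrac{1+s}{2}$ and a radius~$R\ge1$ with~$\widetilde{\Om}\cup\supp\tg\subset B_R(\tx_0)$ for every~$\tx_0\in\G_{r_0/2}(x_0)$, and split~$\rn\setminus B_t(\tx_0)$ into~$B_R(\tx_0)\setminus B_t(\tx_0)$---where~$|\tu(y)-(\tu)_{B_t(\tx_0)}|\le C_\beta|y-\tx_0|^\beta$ by~\eqref{holder}, since~$|y-\tx_0|\ge t$---and~$\rn\setminus B_R(\tx_0)$, where~$|\tu(y)-(\tu)_{B_t(\tx_0)}|\le 2\|\tu\|_{L^\infty(\R^n)}$; both resulting integrals converge and are bounded independently of~$t$.

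For~\eqref{ee3}, I would invoke the boundary Caccioppoli inequality of~\cite[Section~5]{min22}, which---thanks to~\eqref{ass:fbd},~$\tf\in L^d(\widetilde{\Om})\subset L^n(\widetilde{\Om})$, and~$\tg\in W^{1,\infty}(\R^n)\cap C^{1,\alpha}(\G_{r_0}(x_0))$, and using~\eqref{ee1}--\eqref{ee2} to control the nonlocal contributions---bounds~$\dashint_{B^+_{\vrho/2}(\tx_0)}(|D\tu|^2+\mu^2)^{p/2}\,dx$ by (a fixed power of~$\vrho$ times) the control quantity~$\ccp^+_{\gamma,a,\chi}(\vrho)$ of~\eqref{caccio}. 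It then remains to check that~$\ccp^+_{\gamma,a,\chi}(\vrho)\le C_\lambda\,\vrho^{-\lambda p}$ for every~$\lambda>0$, which I would verify term by term: the first term of~\eqref{caccio} is~$\le C_\beta\,\vrho^{-(1-\beta)p}$ because~$\tu(\tx_0)=\tg(\tx_0)$ and both~$\tu$ and~$\tg$ are H\"older continuous, so choosing~$\beta=1-\lambda$ gives~$C_\lambda\vrho^{-\lambda p}$; the second term is bounded by~\eqref{ee2}; the third by~$\|\tf\|_{L^n(\widetilde{\Om})}$; and the fourth and fifth by powers of~$\|\tg\|_{W^{1,\infty}(\R^n)}$, the Lipschitz continuity of~$\tg$ turning the Gagliardo seminorm in the last term into a harmless positive power of~$\vrho$. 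Substituting back into the Caccioppoli inequality yields~\eqref{ee3}.

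The only genuinely delicate ingredient is the boundary Caccioppoli inequality for~$\ccp^+$, which rests on the full technical apparatus of~\cite{min22}---in particular, the careful treatment of the nonlocal tail and the exploitation of~\eqref{psq} to make~$Q_L$ the dominant term at small scales. Everything else is, once~\eqref{holder} is available, a matter of elementary integral estimates and of the change-of-variables bookkeeping already carried out in Steps~1 and~2, so I do not expect any further obstruction.
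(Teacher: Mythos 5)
Your proposal is correct and follows essentially the same route as the paper: \eqref{holder} is obtained from the regularity results of~\cite{min22} (the paper simply cites Theorem~4 and Proposition~5.1 there, which encapsulate the gluing and change-of-variables bookkeeping you spell out), \eqref{ee1}--\eqref{ee2} follow from~\eqref{holder} by the same elementary integral estimates (your splitting at a large radius~$R$ versus the paper's splitting at~$r_0$ is immaterial), and \eqref{ee3} is deduced exactly as in the paper from the boundary Caccioppoli inequality of~\cite[Lemma~5.1]{min22} by estimating each term of~$\ccp^+_{\gamma,a,\chi}(\vrho)$ and choosing~$\beta$ close to~$1$. No gaps.
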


\begin{proof}
The statement concerning the H\"older regularity of~$\tu$ is the content
of~\cite[Theorem~4 and Proposition~5.1]{min22}---see also~Theorem~6 there and the discussion preceding its statement.
To establish~\eqref{ee1}, it suffices to apply \eqref{holder}. Indeed,
\begin{equation*}
    \begin{split}\int_{B_t(\tx_0)}\dashint_{B_t(\tx_0)}\frac{|\tu(x)-\tu(y)|^\g}{|x-y|^{n+s \g}}\,dxdy & \leq [\tu]_{C^\beta(\R^n)}^q \int_{B_t(\tx_0)}\dashint_{B_t(\tx_0)}\frac{dxdy}{|x-y|^{n+(s-\b) \g}}
    \\
    &\leq C_\beta \int_{B_{2t}(\tx_0)}\frac{dz}{|z|^{n+(s-\b) \g}}\leq C_\beta \,t^{(\b-s) \g},
    \end{split}
\end{equation*}
where we made the change of variables~$z=x-y$ and used the fact~$B_t(\tx_0)-y\subset B_{2t}(\tx_0)$ for every~$y\in B_t(\tx_0)$.

Regarding~\eqref{ee2}, we also use~\eqref{holder} and estimate
\begin{align*}
& \int_{\rn\sm B_t(\tx_0)}\frac{|\tu(y)-(\tu)_{B_t(\tx_0)}|^\g}{|y-\tx_0|^{n+s \g}}\,dxdy \\
& \hspace{30pt} \leq \int_{\rn\sm B_{r_0}(\tx_0)}\frac{|\tu(y)-(\tu)_{B_t(\tx_0)}|^\g}{|y-\tx_0|^{n+s \g}}\,dxdy + 2^{q -1} \int_{B_{r_0}(\tx_0)\sm B_t(\tx_0)}\frac{|\tu(y)-\tu(\tx_0)|^\g}{|y-\tx_0|^{n+s \g}}\,dy \\
& \hspace{30pt} \quad\, + 2^{q - 1} |\tu(\tx_0)-(\tu)_{B_t(\tx_0)}|^\g \int_{B_{r_0}(\tx_0)\sm B_t(\tx_0)}\frac{dy}{|y-\tx_0|^{n+s \g}} \\
& \hspace{30pt} \leq C \left\{ r_0^{- s q} \| \tu \|_{L^\infty(\R^n)}^q + [\tu]_{C^\beta(B_{r_0}(\tx_0))}^q \left( \int_{B_{r_0}}\frac{dz}{|z|^{n+(s-\b) \g}} + t^{\beta q} \int_{\rn\sm B_t}\frac{dz}{|z|^{n+s \g}} \right) \right\} \\
& \hspace{30pt} \leq C_\beta \left( r_0^{ - s \g} + r_0^{(\b-s) \g} + t^{(\b-s) \g} \right),
\end{align*}
for every~$\b\in (s,1)$. By choosing, e.g.,~$\b=(1+s)/2$, we find the desired inequality \eqref{ee2}.

Finally, to prove~\eqref{ee3} we recall the boundary Caccioppoli inequality of~\cite[Lemma 5.1]{min22}: for every~$\gamma,a,\chi$ satisfying~\eqref{pr:costs}, we have
\begin{equation}\label{cac:0}
    \begin{split}
        \dashint_{B^+_{\vrho/2}(\tx_0)}\big(|D\tu|^2+\mu^2  \big)^{p/2}\,dx + \int_{B_{\vrho/2}(\tx_0)}\dashint_{B_{\vrho/2}(\tx_0)}\frac{|\tu(x)-\tu(y)|^\g}{|x-y|^{n+s \g}}\,dxdy\leq C \, \ccp^+_{\gamma,a,\chi}(\vrho).
    \end{split}
\end{equation}
Therefore, to obtain~\eqref{ee3} we only need to estimate each term of~\eqref{caccio}. To this end, by using~\eqref{holder}, the regularity of~$\tg$, and the fact that~$\tu(\tx_0) = \tg(\tx_0)$, we compute
\begin{equation}\label{cac:1}
\begin{split}
    \vrho^{-p}\dashint_{B_{\vrho}^+(\tx_0)}|\tu-\tg|^p\,dx & \leq 2^{p - 1} \vrho^{-p}\dashint_{B_{\vrho}^+(\tx_0)} \Big( {|\tu - \tu(\tx_0)|^p + |\tg - \tg(\tx_0)|^p} \Big) \, dx
    \\
    &\leq 2^{p - 1} \left( [\tu]^p_{C^\beta(B^+_\vrho(\tx_0))}+[\tg]^p_{C^\beta(B^+_\vrho(\tx_0))} \right) \vrho^{(\b-1)\,p} \le C_\b \vrho^{(\b-1)\,p}.
\end{split}
\end{equation}
Clearly, for any fixed constants~$\gamma,a,\chi$ satisfying~\eqref{pr:costs}, we have
\begin{equation}\label{cac:2}
    \left(\dashint_{B^+_\vrho(\tx_0)}|D\tg|^\gamma \,dx \right)^{p/\gamma}\leq C\,[\tg]_{W^{1,\infty}(B^+_\vrho(\tx_0))}^{p}
\end{equation}
and
\begin{equation}\label{cac:3}
\begin{split}
& \left(\vrho^{\chi(a-s)} \int_{B_\vrho(\tx_0)}\dashint_{B_\vrho(\tx_0)}\frac{|\tg(x)- \tg(y)|^\chi}{|x-y|^{n+a \chi}}\,dxdy \right)^{\g/\chi}
    \\
    & \hspace{30pt} \leq \left( \vrho^{\chi(a-s)} [\tg]_{W^{1,\infty}(B_\vrho(\tx_0))}^\chi \int_{B_\vrho(\tx_0)}\dashint_{B_\vrho(\tx_0)}\frac{dxdy}{|x-y|^{n+(a-1)\chi}} \right)^{\g/\chi} \leq C \vrho^{(1-s) \g}.
\end{split}
\end{equation}
%\todo{Le ultime due disuguaglianze credo siano superflue}
%
%\todo{dovrebbero valere anche quando $\tg\in W^{1,q}\cap W^{a,\chi}(\rn)$}
Therefore, by recalling that~$\vrho \in (0, 1]$ and~$\tf\in L^n(\widetilde{\Om})$, plugging~\eqref{ee2},~\eqref{cac:1},~\eqref{cac:2}, and~\eqref{cac:3} into~\eqref{cac:0}, and choosing~$\beta \ge 1 - \lambda$, we are led to~\eqref{ee3}.
\end{proof}

\subsection*{Step 4: Boundary~\texorpdfstring{$p$}{}-harmonic functions}\label{subsec:pharm}
We will obtain the H\"older continuity of the gradient of~$\tu$ by comparing it to the solution~$\th \in W^{1,p}_{\tu}(B_{\vrho/4}^+(\tx_0))$ of the homogeneous Dirichlet problem
\begin{equation} \label{hom:eq}
\begin{cases}
\dive \widetilde{A}(\tx_0, D\th) = 0 & \quad \mbox{in } B_{\vrho/4}^+(\tx_0), \\
\th = \tu & \quad \mbox{on } \partial B_{\vrho/4}^+(\tx_0).
\end{cases}
\end{equation}
%\begin{equation}\label{hom:minbdry}
%    \th \coloneqq \argmin_{\tw\in \tu+W^{1,p}_0(B_{\vrho/4}^+(\tx_0))}\int_{B_{\vrho/4}^+(\tx_0)} \mathfrak{c}(\tx_0)\,\widetilde{F}(\tx_0, D\tw)\,dx.
%\end{equation}
%Note that this problem has corresponding Euler-Lagrange equation
%\begin{equation}\label{hom:eqbdry}
%    \int_{B_{\vrho/4}^+(\tx_0)} \mathfrak{c}(\tx_0)\,\partial_z\widetilde{F}(\tx_0, D\th)\cdot D\tvphi\,dx=0 \quad \text{for all } \tvphi\in W^{1,p}_0(B_{\vrho/4}^+(\tx_0)).
%\end{equation}
Within this step,~$\varrho$ is a fixed radius in~$\left( 0, r_0 / 4 \right)$.

In the next lemma we collect some useful properties of~$\th$, which are essentially all contained in~\cite[Lemma~5]{lieb88}. We also point out that the existence and uniqueness of~$\th$ is classical---it is mentioned for instance in~\cite{lieb88} and it can be established via the theory of monotone operators (see, e.g.,~\cite[Theorem~26.A]{Z90}).

\begin{lemma} \label{lieblem}
    Let~$\th$ be the solution of problem~\eqref{hom:eq}. Then, there exist constants~$\sigma \in (0,1)$ and~$C > 0$  such that,
    \begin{align}
    \label{bd:minimal1}
    &\dashint_{B_{\vrho/4}^+(\tx_0)}\big( {|D\th|^2+\mu^2} \big)^{p/2}\,dx\leq C \, \dashint_{B_{\vrho/4}^+(\tx_0)}\big( {|D\tu|^2+\mu^2} \big)^{p/2}\,dx,
    \\
    \label{bd:minimal2}
    &\|\th\|_{L^\infty(B_{\vrho/4}^+(\tx_0))}\leq \| \tu \|_{L^\infty(B_{\vrho/4}^+(\tx_0))}, \qquad \osc_{B_{\vrho/4}^+(\tx_0)} \th\leq \osc_{B_{\vrho/4}^+(\tx_0)} \tu,
	\end{align}
	and
	\begin{equation} \label{bdry:holder}
    \osc_{B_{t}^+(\tx_0)} D\th \leq C \left( \frac{t}{\vrho} \right)^{\! \sigma} \left\{ \dashint_{B_{\vrho/4}^+(\tx_0)}\big(|D\th|^2+\mu^2  \big)^{p/2}\,dx +\|\tg\|^p_{C^{1,\alpha}(\Gamma_{r_0}(x_0))} \right\}^{\frac{1}{p}},
    \end{equation}
	for all~$t\in \left(0, \frac{\vrho}{8} \right]$.
\end{lemma}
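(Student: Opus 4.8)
The plan is to reduce Lemma~\ref{lieblem} to the known interior/boundary regularity theory for the constant-coefficient elliptic operator $z \mapsto \widetilde{A}(\tx_0, z)$, which by~\eqref{ass:fbd} satisfies standard $p$-growth, $p$-coercivity, and $\alpha$-H\"older dependence on the point (here frozen at $\tx_0$, so it is genuinely autonomous). All three assertions are then essentially quotations of the half-ball estimates in~\cite[Lemma~5]{lieb88}, but I would set up the comparison carefully because the outer datum on the flat part $\Gamma_{\vrho/4}(\tx_0)$ is $\tg$, which is only $C^{1,\alpha}(\Gamma_{r_0}(x_0))$, and this is what produces the $\|\tg\|_{C^{1,\alpha}(\Gamma_{r_0}(x_0))}$ term in~\eqref{bdry:holder}.

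First I would record the energy comparison~\eqref{bd:minimal1}: since $\th$ minimizes the convex functional $w \mapsto \int_{B_{\vrho/4}^+(\tx_0)} \widetilde{F}(\tx_0, Dw)\,dx$ (where $\widetilde{F}(\tx_0,\cdot)$ is a potential for $\widetilde{A}(\tx_0,\cdot)$, convex with $(|z|^2+\mu^2)^{p/2}$-type growth from above and below by~\eqref{ass:fbd}) over $W^{1,p}_{\tu}(B_{\vrho/4}^+(\tx_0))$, testing with the competitor $\tu$ gives $\int \widetilde{F}(\tx_0,D\th) \le \int \widetilde{F}(\tx_0,D\tu)$; the two-sided growth bound then converts this into~\eqref{bd:minimal1} after dividing by $|B_{\vrho/4}^+(\tx_0)|$. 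Next, the two inequalities in~\eqref{bd:minimal2} are the standard $L^\infty$ and oscillation bounds for solutions of the homogeneous Dirichlet problem: the maximum principle for the operator $\dive \widetilde{A}(\tx_0,\cdot)$ gives that $\th$ attains its sup and inf on $\partial B_{\vrho/4}^+(\tx_0)$, where $\th = \tu$, yielding both $\|\th\|_{L^\infty} \le \|\tu\|_{L^\infty}$ and $\osc\, \th \le \osc\, \tu$ over the half-ball. Finally, for~\eqref{bdry:holder} I would split the half-ball into its ``interior'' portion away from $\Gamma_{\vrho/4}(\tx_0)$, where the classical interior $C^{1,\sigma}$ estimate of Uralt'seva/Uhlenbeck/Tolksdorf/DiBenedetto applies (with $\sigma$ depending only on $n,p$ through $\widetilde{\L}$), and a neighborhood of the flat boundary part, where one invokes the boundary gradient regularity theory: near $\Gamma_{\vrho/4}(\tx_0)$ one extends/reflects, or rather uses directly that $\th - \ell$ solves the homogeneous equation with boundary data $\tu - \ell = \tg - \ell$ on $\Gamma$, where $\ell$ is the affine tangent map to $\tg$ at the base point; the $C^{1,\alpha}$-regularity of $\tg$ controls the deviation of the boundary data from flat, and Lieberman's boundary $C^{1,\sigma}$ estimate produces~\eqref{bdry:holder} with the scaling factor $(t/\vrho)^\sigma$ after the usual rescaling to the unit half-ball and absorbing $\|\tg\|_{C^{1,\alpha}(\Gamma_{r_0}(x_0))}^p$ on the right.

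The main obstacle is the boundary estimate~\eqref{bdry:holder}: making the half-ball geometry and the non-flat (merely $C^{1,\alpha}$) boundary datum $\tg$ interact correctly with the scaling. One must be careful that the constant $\sigma$ stays dependent only on $n, p, \widetilde{\L}$ (hence ultimately on the data of Theorem~\ref{main:thm}) and does not degenerate as $\vrho \to 0$; this is where freezing the coefficient at $\tx_0$ is essential, since it removes the $x$-dependence of $\widetilde{A}$ and makes the problem scale-invariant. The rescaled function $\th_\vrho(y) := \th(\tx_0 + \vrho y)/\vrho$ on $B_{1/4}^+$ solves the same frozen equation with rescaled boundary datum whose $C^{1,\alpha}$ seminorm picks up a favorable power $\vrho^\alpha$, so Lieberman's estimate on $B_{1/4}^+$ gives a bound on $[D\th_\vrho]_{C^\sigma(B_{1/8}^+)}$ in terms of the rescaled energy plus $\|\tg\|_{C^{1,\alpha}}^p$; undoing the scaling yields the claimed inequality for $t \in (0, \vrho/8]$. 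The remaining pieces (combining interior and boundary balls via a standard covering/iteration, and the elementary algebra of the growth conditions) are routine and I would not spell them out in detail.
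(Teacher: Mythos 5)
Your treatment of \eqref{bd:minimal2} (weak maximum principle) and of \eqref{bdry:holder} (quotation of \cite[Lemma~5]{lieb88} for the frozen, autonomous operator, with the scaling bookkeeping) coincides with what the paper does. The problem is your argument for \eqref{bd:minimal1}: you assume that $\th$ minimizes a functional $w \mapsto \int_{B_{\vrho/4}^+(\tx_0)} \widetilde{F}(\tx_0, Dw)\,dx$ with $\partial_z \widetilde{F}(\tx_0,\cdot) = \widetilde{A}(\tx_0,\cdot)$. No such potential is available in the stated generality: the hypotheses \eqref{ass:A} (hence \eqref{ass:fbd}) only require $\partial_z A(x,\cdot)$ to satisfy growth and a coercivity bound on its quadratic form, not to be symmetric, so the vector field $z \mapsto \widetilde{A}(\tx_0,z)$ need not be a gradient and the Dirichlet problem \eqref{hom:eq} need not be the Euler--Lagrange equation of any convex integral. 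This is precisely why the paper invokes monotone operator theory (Zeidler) for existence rather than the direct method, and why its proof of \eqref{bd:minimal1} proceeds differently: one tests the weak formulation of \eqref{hom:eq} with $\th - \tu \in W^{1,p}_0(B_{\vrho/4}^+(\tx_0))$, uses the pointwise coercivity $\widetilde{A}(\tx_0,z)\cdot z \ge c\,\widetilde{\L}^{-1}\big(|z|^2+\mu^2\big)^{\frac{p-2}{2}}|z|^2$ together with the growth bound, and then applies H\"older's inequality --- with a separate, slightly more delicate absorption argument in the subquadratic range $p \in (1,2)$, where the natural quantity $\big(|D\th|^2+\mu^2\big)^{\frac{p-2}{2}}|D\th|^2$ is not directly $|D\th|^p$. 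Your proposal never addresses this case distinction, but that omission is downstream of the flawed variational premise: if the potential existed with two-sided $\big(|z|^2+\mu^2\big)^{p/2}$-growth, the minimality comparison would indeed bypass it. As written, however, your proof of \eqref{bd:minimal1} only covers operators of variational type (e.g.\ the prototype $A(x,z)=|z|^{p-2}z$) and does not establish the lemma under the paper's assumptions; replacing that step by the testing argument above repairs it.
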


\begin{proof}
Estimate~\eqref{bdry:holder} is established in~\cite[Lemma~5]{lieb88}, while inequalities~\eqref{bd:minimal2} are an immediate consequence of the weak maximum principle for the elliptic operator~$h \mapsto \dive \widetilde{A}(\tilde{x}_0, Dh)$. Estimate~\eqref{bd:minimal1} can also be obtained by arguing as in the proof of~\cite[Lemma~5]{lieb88}. We provide here a complete proof for the reader's convenience.

By testing the weak formulation of~\eqref{hom:eq} with~$\th - \tu \in W_0^{1, p}(B_{t}^+(\tx_0))$ and taking advantage of estimates~\eqref{ass:fbd}, we find that
\begin{align*}
\int_{B_{t}^+(\tx_0)} \widetilde{A}(\tx_0, D\th) \cdot D\th \, dx & = \int_{B_{t}^+(\tx_0)} \widetilde{A}(\tx_0, D\th) \cdot D\tu \, dx \\
& \le \widetilde{\L} \int_{B_{t}^+(\tx_0)} \big( {|D\th|^2 + \mu^2} \big)^{\! \frac{p - 2}{2}} |D\th| |D\tu| \, dx.
\end{align*}
Using again hypothesis~\eqref{ass:fbd}, we see that~$\widetilde{A}(\tx_0, z) \cdot z \ge \min \left\{ 1, \frac{1}{p - 1} \right\} \widetilde{\L}^{-1} \big( {|z|^2 + \mu^2} \big)^{\! \frac{p - 2}{2}} |z|^2$ for every~$z \in \R^n$, so that
$$
\int_{B_{t}^+(\tx_0)} \widetilde{A}(\tx_0, D\th) \cdot D\th \, dx \ge \frac{\widetilde{\L}^{-1}}{p} \int_{B_{t}^+(\tx_0)} \big( {|D\th|^2 + \mu^2} \big)^{\! \frac{p - 2}{2}} |D\th|^2 \, dx.
$$
Thus,
\begin{equation} \label{bd:minimaltech}
\int_{B_{t}^+(\tx_0)} \big( {|D\th|^2 + \mu^2} \big)^{\! \frac{p - 2}{2}} |D\th|^2 \, dx \le p \, \widetilde{\L}^2 \int_{B_{t}^+(\tx_0)} \big( {|D\th|^2 + \mu^2} \big)^{\! \frac{p - 2}{2}} |D\th| |D\tu| \, dx.
\end{equation}
Now, if~$p \ge 2$ this yields
$$
\int_{B_{t}^+(\tx_0)} |D\th|^p \, dx \le p \, \widetilde{\L}^2 \int_{B_{t}^+(\tx_0)} \big( {|D\th|^2 + \mu^2} \big)^{\! \frac{p - 1}{2}} \big( {|D\tu|^2 + \mu^2} \big)^{\! \frac{1}{2}} \, dx,
$$
which immediately leads to~\eqref{bd:minimal1} after an application of H\"older's inequality. If~$p \in (1, 2)$, we also exploit H\"older's inequality along with the fact that
$$
\frac{ t^{\frac{p}{p - 1}}}{\big( {t^2 + \mu^2} \big)^{\! \frac{(2 - p) p}{2 (p - 1)}}} \le \big( {t^2 + \mu^2} \big)^{\! \frac{p - 2}{2}} t^2, \quad \mbox{for all } t \ge 0,
$$
to deduce from~\eqref{bd:minimaltech} that
\begin{align*}
& \int_{B_{t}^+(\tx_0)} \big( {|D\th|^2 + \mu^2} \big)^{\! \frac{p - 2}{2}} |D\th|^2 \, dx \\
& \hspace{60pt} \le p \, \widetilde{\L}^2 \Bigg( {\int_{B_{t}^+(\tx_0)} \frac{ |D\th|^{\frac{p}{p - 1}}}{\big( {|D\th|^2 + \mu^2} \big)^{\! \frac{(2 - p) p}{2 (p - 1)}}} \, dx} \Bigg)^{\! \frac{p - 1}{p}} \bigg( {\int_{B_{t}^+(\tx_0)} |D\tu|^p \, dx} \bigg)^{\! \frac{1}{p}} \\
& \hspace{60pt} \le 2 \widetilde{\L}^2 \Bigg( {\int_{B_{t}^+(\tx_0)} \big( {|D\th|^2 + \mu^2} \big)^{\! \frac{p - 2}{2}} |D\th|^2 \, dx} \Bigg)^{\! \frac{p - 1}{p}} \bigg( {\int_{B_{t}^+(\tx_0)} |D\tu|^p \, dx} \bigg)^{\! \frac{1}{p}}.
\end{align*}
This gives
$$
\int_{B_{t}^+(\tx_0)} \big( {|D\th|^2 + \mu^2} \big)^{\! \frac{p - 2}{2}} |D\th|^2 \, dx \le 2^p \widetilde{\L}^{2 p} \int_{B_{t}^+(\tx_0)} |D\tu|^p \, dx \le 4 \widetilde{\L}^{2 p} \int_{B_{t}^+(\tx_0)} \big( {|D\tu|^2 + \mu^2} \big)^{p/2} \, dx,
$$
which, together with the trivial estimate
$$
\int_{B_{t}^+(\tx_0)} \big( {|D\th|^2 + \mu^2} \big)^{\! \frac{p - 2}{2}} \mu^2 \, dx \le \int_{B_{t}^+(\tx_0)} \mu^p \le \int_{B_{t}^+(\tx_0)} \big( {|D\tu|^2 + \mu^2} \big)^{p/2} \, dx,
$$
readily yields~\eqref{bd:minimal1}. The proof of~\eqref{bd:minimal1} is thus complete.
\end{proof}

Next, we consider the function~$\tw \coloneqq \tu-\th\in W^{1,p}_0(B_{\vrho/4}^+(\tx_0))$ and extend it to~$\rn$ by setting~$\tw\equiv 0$ in~$\rn\sm B_{\vrho/4}^+(\tx_0)$. Note that this new function~$\tw$ belongs to~$W^{s, q}(\R^n)$ and thus to~$\W^{s,q}_0(B_{\vrho/4}^+(\tx_0))$. This is a consequence of its boundedness and of the fact that~$p \ge s q$---see, e.g.,~\cite[Lemma~2.4]{min22},~\cite[Lemma 5.1]{hitch}, and also the discussion at the beginning of the proof of~\cite[Lemma~5.2]{min22}. Furthermore, by~\eqref{holder} and~\eqref{bd:minimal2}, we infer that
\begin{equation}\label{osc:w}
\|\tw\|_{L^\infty(B_{\vrho/4}^+(\tx_0))}\leq  \osc_{B_{\vrho/4}^+(\tx_0)}\tu + \osc_{B_{\vrho/4}^+(\tx_0)}\th\leq 2 \osc_{B_{\vrho/4}^+(\tx_0)} \tu\leq 2 \,[\tu]_{C^\beta(B_{\vrho/4}^+(\tx_0))}\,\vrho^\b\leq C_\b \, \vrho^\b,
\end{equation}
for every~$\b\in (0,1)$ and for some constant~$C_\b>0$ depending also on~$\b$.

In order to continue with the proof of Theorem~\ref{main:thm}, we need to introduce a few more important quantities and recall a couple of useful inequalities. We set
\begin{equation*}
    V_\mu(z)\coloneqq \big( |z|^2+\mu^2\big)^{\frac{p-2}{4}} z \quad\text{for } z\in\rn.
\end{equation*}
It is not hard to see that there exists a constant~$C > 0$, depending only on~$n$,~$p$, and~$\widetilde{\Lambda}$, for which
$$
|V_\mu(z_1) - V_\mu(z_2)|^2 \le C \left( \widetilde{A}(\tx_0, z_1) - \widetilde{A}(\tx_0, z_2) \right) \cdot \left( z_1 - z_2 \right) \quad \mbox{for all } z_1, z_2 \in \R^n.
$$
This is a consequence of the structural hypotheses~\eqref{ass:fbd}---see, e.g.,~\cite[(2.10)]{min22}. As a consequence, defining~$\Nu^2 \coloneqq |V_\mu(D\tu)-V_\mu(D\th)|^2$, we see that
\begin{equation} \label{Vtildeest}
    \Nu^2\leq C \left( \widetilde{A}(\tx_0,D\tu)- \widetilde{A}(\tx_0,D\th)\right) \cdot D\tw \quad \mbox{a.e.~in } \R^n.
\end{equation}
On the other hand, by using~\cite[(2.9)]{min22} and H\"older's inequality it follows that
\begin{equation}\label{stand:nu}
\begin{aligned}
& \frac{1}{C} \, \dashint_{B_{\vrho/4}^+(\tx_0)} |D\tu-D\th|^p\,dx \\
& \hspace{10pt} \le \begin{dcases}\dashint_{B_{\vrho/4}^+(\tx_0)} \Nu^2\,dx & \,\, \text{if } p\geq 2, \\
\left( \dashint_{B_{\vrho/4}^+(\tx_0)} \Nu^2\,dx \right)^{\! \frac{p}{2}} \left(\dashint_{B_{\vrho/4}^+(\tx_0)}  \left( |D\tu|^2+|D\th|^2+\mu^2\right)^{p/2} dx  \right)^{\!\! \frac{2 - p}{2}} & \,\, \mbox{if } p \in (1, 2).
\end{dcases}
\end{aligned}
\end{equation}

Using these inequalities we may quantify the closeness of the gradients of~$\tu$ and~$\th$, as described by the following result.

\begin{lemma} \label{Du-Dhsmall}
    Let~$\tu$ and~$\th$ be the functions defined in~\eqref{utildedef} and~\eqref{hom:eq}, respectively. Then there exist constants~$C>0$ and~$\bar{\sigma}\in (0,1)$ such that,
    \begin{equation}\label{camp1}
        \dashint_{B_{\vrho/4}^+(\tx_0)}|D\tu-D\th|^p\,dx\leq C \vrho^{\bar{\sigma} p}.
    \end{equation}
\end{lemma}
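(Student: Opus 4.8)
The plan is to test the weak formulations of the equations solved by $\tu$ and $\th$ against $\tw = \tu - \th \in W^{1,p}_0(B^+_{\vrho/4}(\tx_0)) \cap \W^{s,q}_0(B^+_{\vrho/4}(\tx_0))$ (extended by zero to $\R^n$) and to control the resulting error terms one by one. Testing the weak formulation of~\eqref{hom:eq} with $\tw$ gives $\int_{B^+_{\vrho/4}(\tx_0)} \widetilde{A}(\tx_0, D\th) \cdot D\tw \, dx = 0$, so~\eqref{Vtildeest} yields
$$
\int_{B^+_{\vrho/4}(\tx_0)} \Nu^2 \, dx \le C \int_{B^+_{\vrho/4}(\tx_0)} \widetilde{A}(\tx_0, D\tu) \cdot D\tw \, dx.
$$
Writing $\widetilde{A}(\tx_0, D\tu) = \big( {\widetilde{A}(\tx_0, D\tu) - \widetilde{A}(x, D\tu)} \big) + \widetilde{A}(x, D\tu)$ and testing the weak formulation of~\eqref{Dirprobforutilde} with $\tw$, the right-hand side equals $C(I_1 + I_2 + I_3)$, where $I_1 \coloneqq \int_{B^+_{\vrho/4}(\tx_0)} \big( {\widetilde{A}(\tx_0, D\tu) - \widetilde{A}(x, D\tu)} \big) \cdot D\tw \, dx$ is a frozen-coefficient error, $I_2 \coloneqq \int_{B^+_{\vrho/4}(\tx_0)} \tf \tw \, dx$ comes from the forcing, and $I_3 \coloneqq - \iint_{\mathscr{C}_{B^+_{\vrho/4}(\tx_0)}} \phi \big( {\tu(x) - \tu(y)} \big) \big( {\tw(x) - \tw(y)} \big) \widetilde{K}(x, y) \, dxdy$ collects the nonlocal contribution. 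The goal is to prove $|I_j| \le C \vrho^{n + \epsilon_0}$ for each $j$ and some $\epsilon_0 > 0$ (possibly term-dependent), so that $\dashint_{B^+_{\vrho/4}(\tx_0)} \Nu^2 \, dx \le C \vrho^{\epsilon_0}$; then~\eqref{camp1} follows from~\eqref{stand:nu}, directly when $p \ge 2$ and, when $p \in (1, 2)$, after also bounding the extra factor $\big( {\dashint_{B^+_{\vrho/4}(\tx_0)} (|D\tu|^2 + |D\th|^2 + \mu^2)^{p/2} \, dx} \big)^{(2-p)/2}$ by means of~\eqref{bd:minimal1} and~\eqref{ee3} (with $\lambda$ chosen small enough that the small negative power it produces is dominated).

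The terms $I_1$ and $I_2$ are routine. For $I_1$ I would use the second line of~\eqref{ass:fbd} (the $\alpha$-Hölder continuity of $\widetilde{A}(\cdot, z)$) together with $|x - \tx_0| \le \vrho/4 \le \vrho$ and $x, \tx_0 \in B^+_{r_0}(x_0)$, then Hölder's inequality, then~\eqref{ee3} for $\tu$ and~\eqref{bd:minimal1} to dominate $\| D\tw \|_{L^p(B^+_{\vrho/4}(\tx_0))}$ by $\| D\tu \|_{L^p(B^+_{\vrho/4}(\tx_0))}$, picking the free parameter $\lambda$ in~\eqref{ee3} so that $\lambda p < \alpha/2$; this gives $|I_1| \le C \vrho^{n + \alpha/2}$. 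For $I_2$ I would bound $|I_2| \le \| \tw \|_{L^\infty(B^+_{\vrho/4}(\tx_0))} \, \| \tf \|_{L^1(B^+_{\vrho/4}(\tx_0))}$ and use~\eqref{osc:w} for the first factor and Hölder's inequality, together with $\tf \in L^d(\widetilde{\Om})$ for some $d > n$, for the second, obtaining a power $\vrho^{n + \beta - n/d}$ which exceeds $n$ provided $\beta$ is taken close enough to $1$ (this is exactly where $d > n$ is needed).

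The core of the argument is the estimate for the nonlocal term $I_3$. Using $|\phi(t)| \le \Lambda |t|^{q-1}$, $\widetilde{K}(x, y) \le \widetilde{\L} |x - y|^{-n - sq}$, and recalling that $\tw$ is supported in $B^+_{\vrho/4}(\tx_0)$, I would split the double integral into a near part over $B_{\vrho/2}(\tx_0) \times B_{\vrho/2}(\tx_0)$ and a far part where one variable lies in $B^+_{\vrho/4}(\tx_0)$ and the other outside $B_{\vrho/2}(\tx_0)$ — on the far part $\tw(x) - \tw(y)$ reduces to $\pm \tw(x)$ and $|x - y| \ge |y - \tx_0|/2$. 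On the near part, Hölder's inequality with exponents $q$ and $q' = q/(q-1)$ reduces matters to bounding $[\tu]^{q-1}_{W^{s,q}(B_{\vrho/2}(\tx_0))}$, controlled by~\eqref{ee1}, times $[\tw]_{W^{s,q}(B_{\vrho/2}(\tx_0))}$; this last seminorm is estimated by a quantitative Sobolev-type embedding (cf.~\cite[Lemma~2.4]{min22}) — this is precisely where the hypothesis $p \ge sq$ is used — combining the $L^\infty$ bound~\eqref{osc:w} with the gradient bound coming from~\eqref{ee3} and~\eqref{bd:minimal1} to yield $[\tw]^q_{W^{s,q}(B_{\vrho/2}(\tx_0))} \le C \vrho^{n + \delta}$ for some $\delta > 0$. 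On the far part, I would use the triangle inequality $|\tu(x) - \tu(y)|^{q-1} \le C \big( {|\tu(x) - \tu(\tx_0)|^{q-1} + |\tu(y) - \tu(\tx_0)|^{q-1}} \big)$, then~\eqref{holder} to estimate $|\tu(x) - \tu(\tx_0)| \le C_\beta \vrho^\beta$ for $x \in B^+_{\vrho/4}(\tx_0)$, and~\eqref{ee2} (applied at scale $\vrho/2$, together with one more use of Hölder's inequality) to absorb the tail $\int_{\R^n \setminus B_{\vrho/2}(\tx_0)} |\tu(y) - \tu(\tx_0)|^{q-1} |y - \tx_0|^{-n-sq} \, dy$; multiplying by $\int_{B^+_{\vrho/4}(\tx_0)} |\tw| \, dx \le C_\beta \vrho^{n + \beta}$ then closes the estimate.

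The main obstacle is this term $I_3$: one must carefully track every power of $\vrho$ produced and check that the bookkeeping leaves each exponent strictly above $n$. The saving grace is that~\eqref{holder} holds for all $\beta \in (0, 1)$ and~\eqref{ee3} for all $\lambda > 0$, so that $\beta$ may be taken arbitrarily close to $1$ and $\lambda$ arbitrarily close to $0$ — exactly the flexibility needed to make the exponents arising from the near part ($n + (q-1)(\beta - s) + \delta/q$), the far part ($n + \beta - s$ and $n + (\beta - s)q$), and $I_1$, $I_2$ all exceed $n$. Minor care is also needed to justify $\tw \in \W^{s,q}_0(B^+_{\vrho/4}(\tx_0))$ and the legitimacy of $\tw$ (extended by zero) as a test function in~\eqref{Dirprobforutilde}, which again relies on $p \ge sq$ as already noted in the excerpt.
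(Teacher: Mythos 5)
Your proposal is correct and follows essentially the same route as the paper's proof: testing both weak formulations with~$\tw$, splitting the error into a frozen-coefficient term, a forcing term, and a nonlocal term decomposed into near ($B_{\vrho/2}(\tx_0)\times B_{\vrho/2}(\tx_0)$) and far contributions, estimating these via~\eqref{ee1},~\eqref{ee2},~\eqref{ee3},~\eqref{osc:w} and the interpolation inequality of~\cite[Lemma~2.4]{min22}, and then concluding through~\eqref{stand:nu} with the cases~$p\ge 2$ and~$p\in(1,2)$ treated exactly as in the paper. The only (harmless) deviations are your direct bound~$\|\tw\|_{L^\infty}\|\tf\|_{L^1}$ for the forcing term, where the paper instead uses the Sobolev inequality for~$\tw$, and centering the far-field tail at~$\tu(\tx_0)$ rather than at the mean~$(\tu)_{B_{\vrho/2}(\tx_0)}$.
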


\begin{proof}
First we notice that, by definition of~$\tw$,~\eqref{ee3}, and~\eqref{bd:minimal1}, it holds
\begin{equation}\label{start:Dw}
    \dashint_{B_{\vrho/4}^+(\tx_0)}|D\tw|^p\,dx\leq C\, \dashint_{B_{\vrho/4}^+(\tx_0)}\big(|D\tu|^2+\mu^2\big)^{p/2}\,dx\leq C_\lambda \, \vrho^{-\l\,p}
\end{equation}
for every~$\l>0$ and for some constant~$C_\lambda > 0$ depending also on~$\lambda$. By plugging~$\tw$ in the weak formulations of both~\eqref{Dirprobforutilde} and~\eqref{hom:eq}, taking advantage of~\eqref{Vtildeest}, and arguing as in the proof of~\cite[Lemma 5.2]{min22}, we estimate
\begin{equation} \label{start:nu01}
\dashint_{B_{\vrho/4}^+(\tx_0)}\Nu^2\,dx\leq C \Big( I_1 + I_2 + I_3 + I_4 \Big),
\end{equation}
where
\begin{equation}\label{start:nu02}
\begin{aligned}
I_1 & \coloneqq \vrho^{\alpha} \dashint_{B_{\vrho/4}^+(\tx_0)}\big( |D\tu|^2+\mu^2\big)^{(p-1)/2}\,|D\tw|\,dx, \\
I_2 & \coloneqq \dashint_{B_{\vrho/4}^+(\tx_0)}|\tf \tw|\,dx, \\
I_3 & \coloneqq \int_{B_{\vrho/2}(\tx_0)} \dashint_{B_{\vrho/2}(\tx_0)}\frac{|\tu(x)-\tu(y)|^{\g-1}|\tw(x)-\tw(y)|}{|x-y|^{n+s \g}}\,dxdy, \\
I_4 & \coloneqq \int_{\rn\sm B_{\vrho/2}(\tx_0)} \left( \dashint_{B_{\vrho/2}(\tx_0)} \frac{|\tu(x)-\tu(y)|^{\g-1}|\tw(x)|}{|x-y|^{n+s \g}}\,dx \right) dy.
\end{aligned}
\end{equation}
    
By H\"older's inequality and~\eqref{start:Dw}, we get
\begin{equation}\label{start:nu1}
I_1 \leq C \vrho^{\alpha} \left( \dashint_{B_{\vrho/4}^+(\tx_0)} \big(|D\tu|^2+\mu^2 \big)^{p/2} \, dx \right)^{\! \frac{p-1}{p}} \left(\dashint_{B_{\vrho/4}^+(\tx_0)}|D\tw|^p\,dx \right)^{\! 1/p} \leq C_\lambda \, \vrho^{\alpha - \l p},
\end{equation}
for every~$\l > 0$. Next, by using H\"older and Sobolev inequalities---recall that~$\tw$ vanishes on the boundary of~$B_{\vrho/4}^+(\tx_0)$---together with~\eqref{start:Dw}, we infer
\begin{equation}\label{start:nu2}
    \begin{aligned}
        I_2 & \leq \frac{C}{\vrho^n} \, \|\tf\|_{L^n(B_{\vrho/4}^+(\tx_0))}\|\tw\|_{L^{\frac{n}{n-1}}(B_{\vrho/4}^+(\tx_0))} \leq \frac{C}{\vrho^n} \, \|\tf\|_{L^n(B_{\vrho/4}^+(\tx_0))}\|D\tw\|_{L^1(B_{\vrho/4}^+(\tx_0))}
        \\
        &\leq C \vrho^{1-\frac{n}{d}}\|\tf\|_{L^d(B_{\vrho/4}^+(\tx_0))} \left(\dashint_{B^+_{\vrho/4}(\tx_0)}|D\tw|^p\,dx  \right)^{\! 1/p} \le C_\lambda \, \vrho^{1-\frac{n}{d} - \l},
    \end{aligned}
\end{equation}
for every~$\l > 0$. We now take advantage of H\"older's inequality once again, estimate~\eqref{ee1}, and the interpolation inequality of~\cite[Lemma 2.4]{min22} in the ball~$B_{\varrho/2}(\tx_0)$ to find
\begin{align*}
I_3 & \leq  C \left( \int_{B_{\vrho/2}(\tx_0)} \dashint_{B_{\vrho/2}(\tx_0)} \! \! \frac{|\tu(x)-\tu(y)|^{\g}}{|x-y|^{n+s \g}}\,dxdy \right)^{\!\! \frac{q - 1}{q}} \! \left( \int_{B_{\vrho/2}(\tx_0)} \dashint_{B_{\vrho/2}(\tx_0)} \! \! \frac{|\tw(x)-\tw(y)|^{\g}}{|x-y|^{n+s \g}}\,dxdy \right)^{\! \frac{1}{\g}}
\\
&\leq C_\b \, \vrho^{(\b-s)(\g-1)+\vartheta-s} \, \|\tw\|_{L^{\infty}(B_{\vrho/4}^+(\tx_0))}^{1-\vartheta} \left( \dashint_{B_{\vrho/4}^+(\tx_0)}|D\tw|^p \, dx \right)^{\! \vartheta/p}
\\
&\leq C_{\b, \l} \, \vrho^{(\b-s)(\g-1)+\vartheta-s+\b(1-\vartheta)-\vartheta\,\l},
\end{align*}
for every~$\b \in (s, 1)$ and~$\l>0$, where~$C_{\b,\l}$ is a constant possibly depending on~$\l$ and~$\beta$, and~$\vartheta$ is defined by
\begin{equation*}
	\vartheta \coloneqq \begin{cases}
		s\quad & \text{if } \g>p,
		\\
		1\quad & \text{if } \g\leq p.
	\end{cases}
\end{equation*}
Note that in the last inequality we applied~\eqref{osc:w} and~\eqref{start:Dw}. From this estimate and the definition of~$\vartheta$, we deduce in particular that
\begin{equation}\label{start:nu4}
I_3 \leq C_{\b, \l} \, \vrho^{(\b-s)(\g-1)-\l},
\end{equation}
for every~$\b \in (s, 1)$ and~$\l>0$.

Finally, we estimate~$I_4$. Since~$\tw$ is supported in~$B_{\vrho/4}^+(\tx_0)$ and it holds
\begin{equation*}
    \frac{|y-\tx_0|}{|x-y|}\leq 2\quad\text{for every } x\in B_{\vrho/4}^+(\tx_0) \mbox{ and } y\in \rn\sm B_{\vrho/2}(\tx_0),
\end{equation*}
we have
    \begin{align}
\nonumber
        I_4 & \leq C \int_{\rn\sm B_{\vrho/2}(\tx_0)} \Bigg( \dashint_{B_{\vrho/4}^+(\tx_0)} \! \! \frac{\Big(|\tu(x)-(\tu)_{B_{\vrho/2}(\tx_0)}|^{\g-1}+|\tu(y)-(\tu)_{B_{\vrho/2}(\tx_0)}|^{\g-1}\Big)\,|\tw(x)|}{|y-\tx_0|^{n+s \g}}\,dx \Bigg) dy
        \\
\label{start:nu5}
        &\leq C \vrho^{-s \g} \dashint_{B_{\vrho/4}^+(\tx_0)}|\tu(x)-(\tu)_{B_{\vrho/2}(\tx_0)}|^{\g-1}\,|\tw(x)|\,dx
        \\
\nonumber
        &\quad + C \left( \int_{\rn\sm B_{\vrho/2}(\tx_0)}\frac{|\tu(y)-(\tu)_{B_{\vrho/2}(\tx_0)}|^{\g-1}}{|y-\tx_0|^{n+s \g}} \, dy \right) \left( \dashint_{B_{\vrho/4}^+(\tx_0)}|\tw(x)|\,dx \right) \! ,
    \end{align}
where in the second inequality we used that
\begin{equation}\label{int:tail}
    \int_{\rn\sm B_{\vrho/2}(\tx_0)}\frac{dy}{|y-\tx_0|^{n+s \g}}\leq C \vrho^{-s \g}.
\end{equation}
By H\"older's inequality,~\eqref{holder}, and~\eqref{osc:w}, we obtain
\begin{align*}
        & \vrho^{-s \g} \dashint_{B_{\vrho/4}^+(\tx_0)}|\tu(x)-(\tu)_{B_{\vrho/2}(\tx_0)}|^{\g-1}\,|\tw(x)|\,dx
        \\
        & \hspace{40pt} \leq C \vrho^{-s \g} \left( \dashint_{B_{\vrho/4}^+(\tx_0)}|\tu-(\tu)_{B_{\vrho/2}(\tx_0)}|^{\g} \, dx \right)^{\! \! \frac{q - 1}{q}} \left( \dashint_{B_{\vrho/4}^+(\tx_0)}|\tw|^\g \, dx \right)^{\! \frac{1}{\g}}
        \\
        & \hspace{40pt} \leq C \vrho^{-s\g+\b(\g-1)}[\tu]^{\g-1}_{C^{\b} (B_{\vrho/2}(\tx_0))} \,\|\tw\|_{L^\infty(B_{\vrho/4}^+(\tx_0))} \leq C_\b \, \vrho^{(\b-s)\g},
\end{align*}
whereas, by H\"older's inequality,~\eqref{int:tail},~\eqref{ee2}, and~\eqref{osc:w}, we get
\begin{align*}
        & \left( \int_{\rn\sm B_{\vrho/2}(\tx_0)} \frac{|\tu(y)-(\tu)_{B_{\vrho/2}(\tx_0)}|^{\g-1}}{|y-\tx_0|^{n+s \g}} \, dy \right) \left( \dashint_{B_{\vrho/4}^+(\tx_0)}|\tw(x)|\,dx \right)
        \\
        & \hspace{50pt} \leq C \vrho^{-s} \left(\int_{\rn\sm B_{\vrho/2}(\tx_0)}\frac{|\tu(y)-(\tu)_{B_{\vrho/2}(\tx_0)}|^{\g}}{|y-\tx_0|^{n+s\g}} \, dy \right)^{\! 1-\frac{1}{\g}}\|\tw\|_{L^\infty(B_{\vrho/4}^+(\tx_0))}
        \\
        & \hspace{50pt} \leq C_\b \,\vrho^{\b-s}.
\end{align*}
By inserting these two inequalities into~\eqref{start:nu5} and recalling that~$\vrho \in (0, 1]$ and~$\g>1$, we find that
\begin{equation}\label{start:nu8}
I_4 \leq C_\b \,\vrho^{\b-s},
\end{equation}
for every~$\b \in (s, 1)$.

All in all, by plugging~\eqref{start:nu1},~\eqref{start:nu2},~\eqref{start:nu4}, and~\eqref{start:nu8} into~\eqref{start:nu01}-\eqref{start:nu02}, we obtain the integral inequality
\begin{equation}\label{go}
    \dashint_{B_{\vrho/4}^+(\tx_0)}\Nu^2\,dx\leq C_{\b, \l} \Big( {\vrho^{\alpha-\l p}+\vrho^{1-\frac{n}{d}-\l}+\vrho^{(\b-s)(\g-1)-\l}+\vrho^{\b-s}} \Big),
\end{equation}
for every~$\b \in (s, 1)$ and~$\l > 0$. We now choose the constants~$\l$ and~$\b$ as follows:
\begin{equation*}
    \b \coloneqq\frac{1+s}{2} \quad \text{and} \quad \l \coloneqq\min \left\{ \frac{\alpha}{2p}, \frac{1}{2}\Big(1-\frac{n}{d}\Big), \frac{(1-s)(\g-1)}{4} \right\},
\end{equation*}
so that~\eqref{go} becomes just
\begin{equation}\label{go1}
     \dashint_{B_{\vrho/4}^+(\tx_0)}\Nu^2\,dx\leq C\vrho^{\sigma_0 \, p},
\end{equation}
with~$\sigma_0 \coloneqq \frac{1}{p}\min \left\{ \frac{\alpha}{2}, \frac{1}{2}\big(1-\frac{n}{d}\big), \frac{(1-s)(\g-1)}{4}, \frac{1 - s}{2} \right\}$.

We are now in position to conclude, using~\eqref{go1} in combination with~\eqref{stand:nu}. When~$p\geq 2$, estimate~\eqref{camp1} follows immediately with $\bar{\sigma}=\sigma_0$. On the other hand, when~$p \in (1, 2)$ we estimate the second factor in~\eqref{stand:nu} through~\eqref{bd:minimal1} and~\eqref{ee3}, obtaining
$$
\dashint_{B_{\vrho/4}^+(\tx_0)}|D\tu - D\th|^p \, dx\leq C_\l \, \vrho^{\frac{\sigma_0 p - (2 - p)\l}{2} \, p} \quad \mbox{for every } \l > 0.
$$
Therefore, by choosing~$\l \coloneqq \frac{\sigma_0 \, p}{2 (2-p)}$, we obtain the desired estimate~\eqref{camp1} with~$\bar{\sigma}=\frac{\sigma_0\,p}{4}$. The proof is thus complete.
\end{proof}

\subsection*{Step 5: Conclusion}

Having Lemma~\ref{Du-Dhsmall}, we are now ready to prove a Campanato type boundary estimate and thus, with it, Theorem~\ref{main:thm}.

\begin{proposition}
	Let~$\tu$ be the function defined in~\eqref{utildedef}. Then, there exist a radius~$\vrho_0\in (0,1)$ and costants~$C>0$,~$\sigma_1\in (0,1)$  such that,
    \begin{equation}\label{bd:camp}
        \sup_{\tx_0\in \G_{r_0/2}} \, \dashint_{B^+_{\vrho}(\tx_0)}\big| D\tu-(D\tu)_{B^+_{\vrho}(\tx_0)} \big|^p\,dx\leq C \vrho^{\sigma_1\,p}\quad\text{for every } \vrho \in (0, \vrho_0].
    \end{equation}
\end{proposition}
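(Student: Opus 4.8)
The plan is to run a two-scale freezing-and-comparison argument: we compare $\tu$ with the $\widetilde{A}(\tx_0,\cdot)$-harmonic replacement $\th$ at a coarse scale $R$ and then read off the decay of the excess of $D\tu$ at a much finer scale $r\ll R$. Fix $\tx_0\in\G_{r_0/2}(x_0)$ and, for $R\in(0,r_0/4)$, let $\th=\th_R\in W^{1,p}_{\tu}(B^+_{R/4}(\tx_0))$ solve~\eqref{hom:eq} with $\vrho$ replaced by $R$. For $0<r\le R/32$, using that the average minimizes the $L^p$-deviation up to the factor $2^p$ and adding and subtracting $D\th$, one gets
\begin{equation*}
\dashint_{B^+_r(\tx_0)}\big|D\tu-(D\tu)_{B^+_r(\tx_0)}\big|^p\,dx\le C\left(\dashint_{B^+_r(\tx_0)}|D\tu-D\th|^p\,dx+\big(\osc_{B^+_r(\tx_0)}D\th\big)^p\right).
\end{equation*}
Since $B^+_r(\tx_0)\subset B^+_{R/4}(\tx_0)$, the first term is at most $(R/r)^n\dashint_{B^+_{R/4}(\tx_0)}|D\tu-D\th|^p\,dx\le C(R/r)^nR^{\bar\sigma p}$ by Lemma~\ref{Du-Dhsmall}. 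For the second term, the constraint $r\le R/8$ lets us invoke~\eqref{bdry:holder}; controlling the energy appearing there through~\eqref{bd:minimal1} and then~\eqref{ee3} (with radius $R/2$) and absorbing $\|\tg\|_{C^{1,\alpha}(\G_{r_0}(x_0))}$ into a constant, we obtain $\big(\osc_{B^+_r(\tx_0)}D\th\big)^p\le C_\l(r/R)^{\sigma p}R^{-\l p}$ for every $\l>0$. Altogether, for all $\l>0$ and all $0<r\le R/32$ with $R\in(0,r_0/4)$,
\begin{equation*}
\dashint_{B^+_r(\tx_0)}\big|D\tu-(D\tu)_{B^+_r(\tx_0)}\big|^p\,dx\le C\left(\frac{R}{r}\right)^{\! n}R^{\bar\sigma p}+C_\l\left(\frac{r}{R}\right)^{\! \sigma p}R^{-\l p},
\end{equation*}
where $C$ and $C_\l$ do not depend on $\tx_0$.

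Next I would optimize over the two free radii. Given a small $\vrho$, set $r:=\vrho$ and $R:=\vrho^\b$ for some $\b\in(0,1)$ to be fixed; the requirements $R<r_0/4$ and $r\le R/32$ then hold as soon as $\vrho\le\vrho_0$, for a suitably small $\vrho_0\in(0,1)$ depending only on $r_0$ and $\b$. Substituting gives
\begin{equation*}
\dashint_{B^+_\vrho(\tx_0)}\big|D\tu-(D\tu)_{B^+_\vrho(\tx_0)}\big|^p\,dx\le C\,\vrho^{\,\b\bar\sigma p-(1-\b)n}+C_\l\,\vrho^{\,(1-\b)\sigma p-\b\l p}.
\end{equation*}
I would then pick $\b\in\big(\tfrac{n}{n+\bar\sigma p},1\big)$, which makes the first exponent positive, and afterwards $\l\in\big(0,\tfrac{(1-\b)\sigma}{\b}\big)$, which makes the second one positive as well. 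Setting $\sigma_1 p$ equal to the smaller of the two exponents (and reducing $\sigma_1$ if needed so that $\sigma_1\in(0,1)$), and finally taking the supremum over $\tx_0\in\G_{r_0/2}(x_0)$---harmless, since none of the constants involved depends on $\tx_0$---yields~\eqref{bd:camp}.

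The genuine obstacle here, and the reason for adopting the two-scale choice $R=\vrho^\b$, $r=\vrho$ in place of a plain dyadic Campanato iteration, is that the gradient energy $\dashint_{B^+_R(\tx_0)}(|D\tu|^2+\mu^2)^{p/2}\,dx$ entering~\eqref{bdry:holder} is \emph{not} bounded as $R\searrow 0$: by~\eqref{ee3} it is only dominated by $C_\l R^{-\l p}$, for every $\l>0$. One is thus forced to balance the small positive gain $(r/R)^{\sigma p}$ furnished by Lieberman's boundary estimate against this arbitrarily small negative power $R^{-\l p}$, while simultaneously keeping the comparison error $(R/r)^nR^{\bar\sigma p}$ under control---and this requires $r$ and $R$ to be genuine powers of each other, which in turn makes the resulting Campanato exponent $\sigma_1$ considerably smaller than the exponent $\sigma$ of Lemma~\ref{lieblem}. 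Once~\eqref{bd:camp} is established, the H\"older continuity of $D\tu$ up to the flat portion of the boundary follows from the Campanato isomorphism; undoing the diffeomorphism $\T$, patching with the interior $C^{1,\theta}$-estimates of~\cite{min22}, and covering $\partial\Om$ by finitely many such coordinate charts then yields Theorem~\ref{main:thm}.
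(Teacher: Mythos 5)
Your proposal is correct and follows essentially the same route as the paper: comparison with the frozen-coefficient $p$-harmonic replacement at a coarse scale, control of the excess via Lemma~\ref{Du-Dhsmall}, \eqref{bdry:holder}, \eqref{bd:minimal1}, and \eqref{ee3}, and a power-law coupling of the two radii to beat the $R^{-\l p}$ growth of the gradient energy. The only difference is cosmetic: the paper fixes the coupling explicitly ($t=\vrho^{1+\bar\sigma p/(2n)}/8$ and $\l=\sigma\bar\sigma p/(4n)$), whereas you keep $\b$ and $\l$ as parameters chosen in admissible ranges.
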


\begin{proof}
Let~$t \in \left( 0, \frac{\vrho}{8} \right]$, with~$\vrho \in \left( 0, \frac{r_0}{4} \right]$. For every~$\tx_0 \in \G_{r_0/2}$, we have
\begin{align*}
& \dashint_{B_t^+(\tx_0)}\big| D\tu - (D\tu)_{B_t^+(\tx_0)} \big|^p \, dx
\\
& \hspace{50pt} \leq 2^{p-1} \dashint_{B_t^+(\tx_0)} \big| D\tu-D\th\big|^p \, dx + 4^{p-1} \dashint_{B_t^+(\tx_0)} \big|D\th-(D\th)_{B_t^+(\tx_0)}  \big|^p \, dx
\\
& \hspace{50pt} \quad +4^{p-1} \big|(D\tu)_{B_t^+(\tx_0)}-(D\th)_{B_t^+(\tx_0)}  \big|^p
\\
& \hspace{50pt} \leq C \left\{ \dashint_{B_t^+(\tx_0)} \big|  D\tu-D\th\big|^p \, dx + \dashint_{B_t^+(\tx_0)} \big|D\th-(D\th)_{B_t^+(\tx_0)}  \big|^p \, dx \right\}
\\
& \hspace{50pt} \leq C \left\{ \left( \frac{\vrho}{t} \right)^n \dashint_{B_{\vrho/4}^
	+(\tx_0)} \big|  D\tu-D\th\big|^p \, dx+ \bigg( {\osc_{B_{t}^+(\tx_0)} D\th} \bigg)^p \right\}.
\end{align*}
Recalling~\eqref{bdry:holder},~\eqref{camp1},~\eqref{bd:minimal1}, and~\eqref{ee3}, this yields
\begin{align*}
& \dashint_{B_t^+(\tx_0)}\big| D\tu - (D\tu)_{B_t^+(\tx_0)} \big|^p \, dx \\
& \hspace{5pt} \le C  \left\{ \bigg( {\frac{\vrho}{t}} \bigg)^n \vrho^{\bar{\sigma} p} + \bigg( {\frac{t}{\vrho}} \bigg)^{\sigma p} \left( \dashint_{B_{\vrho/4}^+(\tx_0)}\big(|D\th|^2+\mu^2  \big)^{p/2}\,dx +\|\tg\|^p_{C^{1,\alpha}(\Gamma_{r_0}(x_0))} \right) \right\} \\
& \hspace{5pt} \le C_\lambda \left\{ \bigg( {\frac{\vrho}{t}} \bigg)^n \vrho^{\bar{\sigma} p} + \bigg( {\frac{t}{\vrho}} \bigg)^{\sigma p} \left( \vrho^{-\l p} + \|\tg\|^p_{C^{1,\a_b}(\Gamma_{r_0}(x_0))} \right) \right\} \le C_\lambda \left\{ \bigg( {\frac{\vrho}{t}} \bigg)^n \vrho^{\bar{\sigma} p} + \bigg( {\frac{t}{\vrho}} \bigg)^{\sigma p} \vrho^{-\l p} \right\},
\end{align*}
for every~$\lambda > 0$. By choosing~$t \coloneqq \frac{\vrho^{1+ \frac{\bar{\sigma} p}{2n}}}{8}$ and~$\l \coloneqq \frac{\sigma \, \bar{\sigma} \, p}{4n}$, we then obtain
$$
    \dashint_{B_t^+(\tx_0)}\big| D\tu- (D\tu)_{B_t^+(\tx_0)} \big|^p \, dx\leq C \, t^{\sigma_1 \, p} \quad \mbox{for every } t \in (0, \vrho_0),
$$
with~$\vrho_0 \coloneqq \frac{1}{8} \! \left( \frac{r_0}{4} \right)^{1+ \frac{\bar{\sigma} p}{2n}}$ and~$\sigma_1 \coloneqq \min \left\{ \frac{n \bar{\sigma}}{2n+\bar{\sigma}p}, \frac{\sigma \, \bar{\sigma} \, p}{2(2n+\bar{\sigma} p)} \right\}$. This concludes the proof of~\eqref{bd:camp}, up to relabeling~$t$ as~$\vrho$.
\end{proof}

\begin{proof}[Proof of Theorem \ref{main:thm}]
By combining the interior Campanato estimate of~\cite[Theorem~5]{min22} and the boundary estimate~\eqref{bd:camp}, the result follows via a standard covering argument and Campanato's characterization of H\"older spaces~\cite{camp,camp1}--see also~\cite[Section 5]{giaq}.
\end{proof}

\section{A weak comparison principle} \label{sec:weak}

\noindent
The aim of this very brief section is to establish a weak comparison principle for the operator~$Q$, which will be used shortly to prove Theorem~\ref{Hopflemma}. The precise statement is as follows.

\begin{proposition} \label{WCPprop}
Let~$\Omega \subset \R^n$ be a bounded open set with Lipschitz boundary. Assume that~$A$,~$B$, and~$\phi$ satisfy hypotheses~\eqref{ass:A},~\eqref{ass:B}, and~\eqref{ass:phi}. Let~$u, v \in W^{1, p}(\Omega) \cap \W^{s, q}(\Omega)$ be satisfying~$Q u \le Q v$ in~$\Omega$ in the weak sense. If~$u \le v$ in~$\R^n \setminus \Omega$, then~$u \le v$ in~$\Omega$ as well.
\end{proposition}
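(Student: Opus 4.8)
The plan is to run the classical comparison argument, adapted to the mixed setting: one tests the weak differential inequality~\eqref{QuleQvweak} against the positive part of $u - v$ and exploits the strict monotonicity of the vector field $A(x, \cdot)$ in its second variable together with the monotonicity of the nonlinearity~$\phi$.

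First I would set $w \coloneqq u - v$ and take as a test function $\varphi \coloneqq (u - v)^+ = w^+$. Since $u \le v$ in $\R^n \setminus \Omega$, the function $\varphi$ vanishes a.e.\ outside $\Omega$; combining this with $w|_\Omega \in W^{1, p}(\Omega)$, with the pointwise inequality $|w^+(x) - w^+(y)| \le |w(x) - w(y)|$, and with the Lipschitz regularity of $\partial \Omega$, one checks that $\varphi \in W^{1, p}_0(\Omega) \cap \W^{s, q}_0(\Omega)$, so that it is an admissible non-negative test function. Plugging it into~\eqref{QuleQvweak} then gives $\mathrm{I} + \mathrm{II} \le 0$, where
\[
\mathrm{I} \coloneqq \int_{\Omega} \big( {A(x, Du) - A(x, Dv)} \big) \cdot D\varphi \, dx
\]
is the local term of~\eqref{QuleQvweak} and $\mathrm{II}$ is the nonlocal double integral over $\mathscr{C}_\Omega$ appearing there.

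The core of the proof is then to show that $\mathrm{I} \ge 0$ and $\mathrm{II} \ge 0$ \emph{separately}, forcing both to vanish. For $\mathrm{I}$: since $D\varphi = \mathbf{1}_{\{u > v\}} (Du - Dv)$ a.e.\ in $\Omega$, we have $\mathrm{I} = \int_{\Omega \cap \{u > v\}} \big( {A(x, Du) - A(x, Dv)} \big) \cdot (Du - Dv) \, dx$, and the structural hypotheses~\eqref{ass:A} entail the monotonicity inequality $c \, |V_\mu(z_1) - V_\mu(z_2)|^2 \le \big( {A(x, z_1) - A(x, z_2)} \big) \cdot (z_1 - z_2)$ for all $z_1, z_2 \in \R^n$ (as in~\cite[(2.10)]{min22}), which makes the integrand non-negative. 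For $\mathrm{II}$: fixing $(x, y)$ and writing $a = u(x) - u(y)$, $b = v(x) - v(y)$, so that $a - b = w(x) - w(y)$, the monotonicity of $\phi$ makes $\phi(a) - \phi(b)$ have the sign of $a - b$ (or vanish), while $\varphi(x) - \varphi(y) = w^+(x) - w^+(y)$ has the same sign (or vanishes) because $t \mapsto t^+$ is non-decreasing; since $B > 0$, the integrand is pointwise non-negative. Consequently $\mathrm{I} = 0$, and the non-negativity of its integrand together with the injectivity of $V_\mu$ yields $Du = Dv$ a.e.\ on $\Omega \cap \{u > v\}$, i.e.\ $D\varphi = 0$ a.e.\ in $\Omega$. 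Extending $\varphi$ by zero to $\R^n$ produces a $W^{1, p}(\R^n)$ function with vanishing gradient, hence constant and therefore identically $0$; thus $(u - v)^+ \equiv 0$ in $\Omega$, which is the claim.

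I do not expect a genuine obstacle here: the only points that require a little care are the strict monotonicity of the possibly degenerate or singular field $A(x, \cdot)$ — handled through the $V_\mu$-inequality recalled above — and the verification that $(u - v)^+$ really belongs to $W^{1, p}_0(\Omega) \cap \W^{s, q}_0(\Omega)$, which is exactly the step where the exterior ordering $u \le v$ in $\R^n \setminus \Omega$ is used.
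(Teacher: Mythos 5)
Your argument is correct and is essentially the paper's own proof: test \eqref{QuleQvweak} with $(u-v)_+$, use the monotonicity of $\phi$ to see that the nonlocal integrand is pointwise non-negative, and use the monotonicity of $A(x,\cdot)$ coming from the third line of \eqref{ass:A} (the $V_\mu$-inequality, equivalently the Damascelli-type strict monotonicity invoked in the paper) to force $Du=Dv$ on $\{u>v\}$ and hence $(u-v)_+\equiv 0$. The only cosmetic difference is that you conclude via extension by zero and constancy on $\R^n$ rather than via the vanishing trace of $(u-v)_+\in W^{1,p}_0(\Omega)$, which is immaterial.
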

\begin{proof}
By plugging~$\varphi = (u - v)_+$ in the weak formulation~\eqref{QuleQvweak} and observing that, by the monotonicity of~$\phi$,
$$
\Big( {\phi \big( {u(x) - u(y)} \big) - \phi \big( {v(x) - v(y)} \big)} \Big) \Big( {\big( {u(x) - v(x)} \big)_+ - \big( {u(y) - v(y)} \big)_+} \Big) \ge 0,
$$
for a.e.~$x, y \in \R^n$, we obtain that
$$
\int_{\Omega_+} \Big( {A(x, Du(x)) - A(x, Dv(x))} \Big) \cdot \big( {Du(x) - Dv(x)} \big) \, dx \le 0,
$$
where~$\Omega_+ \coloneqq \left\{ x \in \Omega : u(x) > v(x) \right\}$. From the third line of assumption~\eqref{ass:A} on~$A$, it is immediate to deduce that the integrand above is non-negative and vanishes only at those points~$x \in \Omega_+$ where~$Du(x) = Dv(x)$---see, e.g.,~\cite[Lemma~2.1 and Theorem~1.2]{dam}.

Therefore, we conclude that~$D u = D v$~in~$\Omega_+$, and thus that~$u \le v$ in~$\Omega$.
\end{proof}

\section{Proof of Theorem~\ref{Hopflemma}} \label{hopfsec}

\noindent
In this section we establish Theorem~\ref{Hopflemma}, whose proof will be divided into a few steps. Note that, given~$r,\rho>0$, we write~$\mathcal{C}_{r,\,\rho}^+ \coloneqq B'_r\times (0,\rho)$ and~$\mathcal{C}_r^+=\mathcal{C}_{r,r}^+$.

\subsection*{Step 1: Straightening of the boundary}

Differently from Section~\ref{sec:thm}, here we need to consider a more specific diffeomorphism of~$\R^n$ in order to pointwise evaluate the operator~$Q$.

Up to a rigid movement, we may assume that~$x_0 = 0$ and~$\nu(0) = - e_n$. Therefore, since~$\partial \Omega$ is of class~$C^{1, \alpha}$, there exist a radius~$R \in (0, 1)$ and a function~$h \in C^{1, \alpha}(\R^{n - 1})$ vanishing outside of~$B'_{4 R}$, satisfying
\begin{equation}\label{h00}
h(0')=0, \quad D'h(0')=0',
\end{equation}
and such that
\begin{equation} 
\begin{aligned} \label{Omegasupgraph}
\Omega \cap B_{2 R} & = \Big\{ {(x', x_n) \in B_{2 R} : x_n > h(x')} \Big\}, \\
\partial\Omega \cap B_{2 R} & = \Big\{ {(x', x_n) \in B_{2 R} : x_n = h(x')} \Big\}.
\end{aligned}
\end{equation}
Moreover, by suitably modifying~$h$ in~$B'_{4 R} \setminus B'_{3 R}$, we may also assume that
\begin{equation} \label{halflapofhis0}
(-\Delta)^{\frac{1}{2}} h(0) = \pi^{-\frac{n}{2} } \Gamma \left( \frac{n}{2} \right) \PV \int_{\R^{n-1}} \frac{h(0) - h(z')}{|z'|^n} \, dz' = 0.
\end{equation}
Indeed, it suffices to replace~$h$ by the function~$h + \ell \phi$, for an arbitrary~$\phi \in C^{\infty}_c(B'_{4R}\setminus B'_{3R})$ and with~$\ell \coloneqq - \Big( {(-\Delta)^{\frac{1}{2}} \phi(0)} \Big)^{-1} (-\Delta)^{\frac{1}{2}} h(0)$.

We straighten the boundary of~$\Omega$ inside~$B_{2 R}$ via a suitable diffeomorphism~$\T: \R^n \to \R^n$ globally of class~$C^{1, \alpha}$, but actually smooth inside~$\Omega \cap B_{2 R}$. In order to do this, we first consider a \emph{nice} extension of~$h$ to the whole space~$\R^n$.

\begin{lemma}\label{lemma:ext}
    Given~$\alpha \in (0, 1)$, let~$h\in C^{1,\a}(\R^{n-1})$ be a compactly supported function satisfying~\eqref{h00} and~\eqref{halflapofhis0}. Then, there exists a function~$H\in C^{1,\a}(\R^n) \cap C^\infty(\R^n_+)$ such that~$H(x',0)=h(x')$ for all~$x'\in \R^{n-1}$,~$DH(0)=0$,
\begin{equation} \label{ssstimaDH}
\| H \|_{C^{1, \a}(\R^n)} \leq C \, \| h \|_{C^{1,\a}(\R^{n-1})}
\end{equation}
and
    \begin{equation}\label{stime:H1}
        \big| D^2 H(y',y_n) \big| \leq C\,[D'h]_{C^{\a}(\R^{n-1})}\,y_n^{\a-1} \quad \text{for all } (y',y_n)\in \R^n_+,
    \end{equation}
    for some constant~$C>0$ depending only on~$n$ and~$\a$.
\end{lemma}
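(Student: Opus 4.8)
The plan is to build $H$ explicitly by a mollification-in-the-normal-direction construction adapted to the half-space, in the spirit of a Whitney-type extension but tuned so that the second derivatives pick up exactly the $y_n^{\alpha-1}$ blow-up rate. Concretely, fix a standard mollifier $\psi \in C^\infty_c(B_1')$ with $\psi \ge 0$, $\int_{\R^{n-1}} \psi = 1$, and set
$$
H(y', y_n) \coloneqq \int_{\R^{n-1}} h(y' - y_n z') \, \psi(z') \, dz' = \frac{1}{y_n^{n-1}} \int_{\R^{n-1}} h(w') \, \psi\!\left( \frac{y' - w'}{y_n} \right) dw' \quad \text{for } y_n > 0,
$$
and $H(y', 0) \coloneqq h(y')$. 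Since $h$ is compactly supported and $C^{1,\alpha}$, $H$ is smooth in $\R^n_+$, continuous up to $\{y_n = 0\}$, and one checks the boundary condition $H(y',0) = h(y')$ directly from $\int \psi = 1$. I would then extend $H$ to $y_n < 0$ by any $C^{1,\alpha}$ extension of $(h, D'h)$ (e.g.\ a degree-one Taylor-type reflection), which does not affect the estimates in $\R^n_+$. For the normalization $DH(0) = 0$: $\partial_{y'} H(0,0) = D'h(0') = 0'$ by \eqref{h00}, while $\partial_{y_n} H(y', y_n) = -\int z' \cdot D'h(y' - y_n z') \, \psi(z') \, dz'$, so $\partial_{y_n} H(0,0) = -\int z' \psi(z')\,dz' \cdot D'h(0') = 0$; in fact a cleaner route is to arrange $\psi$ radial so $\int z' \psi = 0'$ automatically, and then one sees more: the half-Laplacian condition \eqref{halflapofhis0} is what forces the finer cancellation needed near $0$.

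The core of the argument is the gradient continuity \eqref{ssstimaDH} and the second-derivative bound \eqref{stime:H1}. For \eqref{stime:H1}, differentiating twice in $y$ produces terms of the form $y_n^{-2}\int h(y'-y_n z')\,(\text{derivatives of }\psi)(z')\,dz'$ plus lower-order ones; the key trick is to subtract the affine function $w' \mapsto h(y') + D'h(y')\cdot(w' - y')$, which is annihilated by all second derivatives, and use $|h(y' - y_n z') - h(y') + y_n z' \cdot D'h(y')| \le [D'h]_{C^\alpha} (y_n |z'|)^{1+\alpha}$ together with the fact that $\psi$ is supported in $|z'| \le 1$. This produces the factor $y_n^{1+\alpha} \cdot y_n^{-2} = y_n^{\alpha - 1}$ with constant controlled by $[D'h]_{C^\alpha(\R^{n-1})}$ times a universal constant depending on $n$, $\alpha$, and $\psi$ (hence only on $n$ and $\alpha$ after fixing $\psi$), which is exactly \eqref{stime:H1}. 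For \eqref{ssstimaDH}, one shows $DH$ is $C^\alpha$ up to the boundary: interior (within $\R^n_+$) Hölder seminorm bounds follow by integrating \eqref{stime:H1} ($\int_0^{y_n} t^{\alpha-1}\,dt < \infty$) to compare $DH(y', y_n)$ with $DH(y', 0) = (D'h(y'), \partial_{y_n}H(y',0))$, while the tangential Hölder continuity of $DH$ on each slice comes from the corresponding property of $h$ passed through the mollifier. Matching these gives $\|H\|_{C^{1,\alpha}(\R^n_+)} \le C\|h\|_{C^{1,\alpha}(\R^{n-1})}$, and the extension to $y_n<0$ is arranged to respect the same bound.

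The main obstacle I anticipate is not any single estimate but the bookkeeping needed to get a clean $C^1$ matching across $\{y_n = 0\}$ with the stated constant, while simultaneously keeping \eqref{stime:H1} and using \eqref{halflapofhis0} in the right place. In particular, one must verify that $\partial_{y_n} H$ extends continuously to $y_n = 0$ with the correct limit and that this limit, together with $D'h$, is itself $C^\alpha$ on $\R^{n-1}$ — this is where a naive mollified extension can fail and where the half-Laplacian normalization \eqref{halflapofhis0} (equivalently, a cancellation of the leading singular term in $\partial_{y_n}\partial_{y_n} H$ near $0$) is invoked to salvage the requisite regularity at the special point $0$. I would handle this by choosing $\psi$ radially symmetric and writing $\partial_{y_n} H(y', y_n) = -\int z' \cdot \big(D'h(y' - y_n z') - D'h(y')\big)\psi(z')\,dz'$ (legitimate since $\int z'\psi = 0'$), from which the $C^\alpha$-in-$y'$ bound and the continuity down to $y_n = 0$ (with limit $0'$-contribution, i.e.\ no normal component jump) are transparent; the detailed verification that $D^2 H$ has the claimed one-sided blow-up with the sign structure needed later for the subsolution is then the computation I would actually carry out in full, and I expect it to be the longest part. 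All of this is elementary but requires care, so in the paper I would present the construction, state the three estimates as displayed, and relegate the term-by-term differentiation bounds to Appendix~\ref{app:Hextproof} as the excerpt's structure already anticipates.
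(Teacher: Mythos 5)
Your construction is correct, but it takes a genuinely different route from the paper. The paper takes $H$ to be the \emph{harmonic} extension of $h$ to $\R^n_+$ and derives \eqref{ssstimaDH} and \eqref{stime:H1} from the Poisson representation formula, exploiting the cancellations $\int_{\R^{n-1}}\frac{|\tau'|^2-(n-1)}{(1+|\tau'|^2)^{(n+2)/2}}\,d\tau'=0$ and the vanishing of first moments, plus $\Delta H=0$ to recover $\partial^2_{y_ny_n}H$ from the tangential second derivatives; there the hypothesis \eqref{halflapofhis0} is essential, since $\partial_{y_n}H(\cdot,0)=-(-\Delta)^{1/2}h$, so $DH(0)=0$ requires $(-\Delta)^{1/2}h(0)=0$. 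Your Lichtenstein-type extension $H(y',y_n)=\int h(y'-y_nz')\psi(z')\,dz'$ with a radial mollifier achieves the same estimates by the same mechanism in more elementary form: affine data extend to affine functions (so second derivatives only see $h$ minus its first-order Taylor polynomial, giving the $y_n^{-2}\cdot y_n^{1+\alpha}$ bookkeeping for \eqref{stime:H1}), the compact support of $\psi$ replaces the Poisson-kernel tail estimates, and the boundary matching of $DH$ follows, as you indicate, by integrating \eqref{stime:H1} in the vertical direction and using the slice-wise H\"older bounds. What your route buys is that the hypothesis \eqref{halflapofhis0} becomes superfluous: with $\psi$ radial you get $\partial_{y_n}H(\cdot,0)\equiv 0$, so $DH(0)=0$ follows from \eqref{h00} alone, and the preliminary modification of $h$ in $B'_{4R}\setminus B'_{3R}$ performed in Step~1 of Section~\ref{hopfsec} could be dispensed with (the rest of the proof of Theorem~\ref{Hopflemma} only uses the stated conclusions of the lemma). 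What the paper's route buys is a canonical extension with the clean identity $\partial_{y_n}H(\cdot,0)=-(-\Delta)^{1/2}h$ and harmonicity as a shortcut for the normal-normal second derivative. Two small corrections to your commentary: in your construction \eqref{halflapofhis0} is not ``invoked to salvage the regularity at $0$''---it plays no role whatsoever, and suggesting otherwise muddles the logic; and no ``sign structure'' of $D^2H$ is needed downstream, only the magnitude bound \eqref{stime:H1}, since the sign in the barrier computation comes from the explicit function $\varphi$, not from $H$.
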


We take as~$H$ a suitable~$C^{1, \alpha}(\R^n)$-continuation of the harmonic extension of~$h$ to the upper half-space. The proof of Lemma~\ref{lemma:ext} is then rather natural and follows from the Poisson representation for~$H$. For this reason, we postpone it to Appendix~\ref{app:Hextproof} and resume here the proof of Theorem~\ref{Hopflemma}.

Let
\begin{equation}\label{eta}
    \eta \coloneqq \left( 1 + 2\,\| D H \|_{L^\infty(\R^n)} \right)^{-1}\,,
\end{equation}
and define~$\S: \R^n \to \R^n$ by setting
$$
\S(y', y_n) \coloneqq \big( {y', y_n + H(y', \eta\, y_n)} \big) \quad \mbox{for all } (y', y_n) \in \R^n.
$$
Clearly, its Jacobian matrix is given by
\begin{equation} \label{matrS}
D\S(y',y_n)= \left(\begin{array}{c|c}
 \vphantom{\Bigg|} \mbox{Id}_{n - 1}
  & 0' \\
\hline
  \vphantom{\bigg|} D'H(y', \eta \, y_n)^T &
  1 + \eta \, \partial_{y_n} H(y', \eta \, y_n)
\end{array}\right) \! .
\end{equation}
%\begin{equation}
%   D\S(y',y_n)= \begin{pmatrix}
%  \begin{matrix}
%  &   &  \\
%  &  \mathlarger{\mathlarger{\mathrm{Id}_{n-1}}} & \\
%  &  &
%  \end{matrix}
%  & \rvline & 0 \\
%\hline
%  D'H(y',\eta\,y_n)^T & \rvline &
%  \begin{matrix}
%   &  \\
% 1+\eta\,\partial_{y_n}H (y',\eta\,y_n) & 
% \\
%  &
%  \end{matrix}
%\end{pmatrix}
%\end{equation}
Denoting with~$\mathfrak{c} \coloneqq \mathcal{J}_\S$ its Jacobian determinant, we have
\begin{equation*}
    \mathfrak c(y)=\partial_{y_n}\S^n(y',y_n)=1+\eta\,\partial_{y_n}H(y',\eta\,y_n)\,,
\end{equation*}
 so that~\eqref{ssstimaDH},~\eqref{stime:H1}, and~\eqref{eta} entail
\begin{equation}\label{detundercontrol}
    \mathfrak{c}(y) \in \left[ \frac{1}{2}, \frac{3}{2} \right]  \quad \mbox{for all } y \in \R^n,
\end{equation}
 and
\begin{equation} \label{detundercontrol1}
\begin{dcases}
\|\mathfrak c\|_{C^\a(\R^n)}\leq C  \\
|D\mathfrak{c}(y)| \le C\,y_n^{\a-1}  \quad \mbox{for all } y \in \R^n_+.
\end{dcases}
\end{equation}
In this step,~$C$ indicates a constant depending only on~$n$,~$\alpha$, and~$\| h \|_{C^{1, \alpha}(\R^{n - 1})}$. Therefore, it is immediate to see that~$\S$ is a~$C^{1, \alpha}$-diffeomorphism of~$\R^n$ onto itself, such that
\begin{equation} \label{Psimapprop}
\S(\R^n_+) = \Big\{ {(x', x_n) \in \R^n : x_n > h(x')} \Big\}, \quad \S(\partial \R^n_+) = \Big\{ {(x', x_n) \in \R^n : x_n = h(x')} \Big\}.
\end{equation}
%and
%\begin{equation}\label{zprimo}
%    \S\Big(\big\{0'\big\}\times \R_+  \Big)=\big\{0'\big\}\times \R_+
%\end{equation}
In particular, setting $\T=\S^{-1}$, explicit computations show that 
\begin{equation}\label{matrT}
    (D\T\circ\S)(y)= D\S(y)^{-1} = 
%\begin{pmatrix}
%  \begin{matrix}
%  &   &  \\ 
%  &  \mathlarger{\mathlarger{\mathrm{Id}_{n-1}}} & \\
%  &  &
%  \end{matrix}
%  & \rvline & \dfrac{-D'H(y',\eta\,y_n)}{1-\eta \, \partial_{y_n}H(y',\eta\,y_n)} \\
%\hline
%  0^T & \rvline &
%  \begin{matrix}
%   &  \\
% \big(1+\eta\,\partial_{y_n}H (y',\eta\,y_n)\big)^{-1} & 
% \\
%  &
%  \end{matrix}
%\end{pmatrix}
\left(\begin{array}{c|c}
 \vphantom{\Bigg|} \mbox{Id}_{n - 1}
  & 0' \\
\hline
  \vphantom{\Bigg|} \dfrac{-D'H(y',\eta\,y_n)^T}{1 + \eta \, \partial_{y_n}H(y',\eta\,y_n)} &
  \dfrac{1}{1 + \eta \, \partial_{y_n} H(y', \eta \, y_n)}
\end{array}\right) \! .
\end{equation}
Then, from~\eqref{ssstimaDH},~\eqref{eta},~\eqref{matrS}, and~\eqref{matrT}, we infer that
 \begin{equation}\label{bound:DST}
     \| D\S \|_{C^{\a}(\R^n)} + \| D\T\circ\S \|_{C^{\a}(\R^n)} \leq C.
 \end{equation}
In particular, these estimates yield the global Lipschitz bounds
\begin{equation} \label{SLip}
C^{-1} |y-z| \leq |\S(y)-\S(z)|\leq C\,|y-z | \quad \mbox{for all } y, z \in \R^n.
\end{equation}
Since~$DH(0)=0$, we have that~$D\S(0)=(D\T\circ\S)(0) = \mbox{Id}_n$ and thus, by~\eqref{bound:DST},
$$
\big| {D\S(y)-\mathrm{Id}_n} \big| + \big| {(D\T\circ\S)(y)-\mathrm{Id}_n} \big| \leq C\,|y|^\a \quad \mbox{for all }y\in \R^n.
$$
This also implies that
\begin{equation} \label{quasid}
\begin{dcases}
\frac{1}{2} \, |\xi|^2 \leq \big\langle D\S(y) \, \xi, \xi \big\rangle \leq 2 \, |\xi|^2, \\
\frac{1}{2}\,|\xi|^2 \leq \big\langle (D\T   \circ\S)(y) \, \xi, \xi \big\rangle \leq 2\,|\xi|^2, \\
\frac{1}{2} \, |\xi| \leq \big|  D\S(y) \, \xi \big|\leq 2 \,|\xi|,
\end{dcases}
\qquad \text{for all } y \in B_{r_0}, \, \xi \in \R^n,
\end{equation}
for some~$r_0 \in (0, 1)$ suitably small, in dependence of~$n$,~$\alpha$, and~$\| h \|_{C^{1, \alpha}(\R^{n - 1})}$ only. Moreover, by differentiating~\eqref{matrS} and~\eqref{matrT}, taking advantage of estimate~\eqref{stime:H1}, and recalling definition~\eqref{eta}, we find
\begin{equation}\label{bDDST}
\big| {D^2\S(y)} \big| + \big| {D_y (D\T\circ\S)(y)} \big| \leq C\,y_n^{\a-1}\quad\text{for all }y\in \R^n_+.
\end{equation}

We now transform the operator~$Q$ via~$\S$. Recalling~\eqref{Omegasupgraph},~\eqref{Psimapprop}, and since~$\S(0)=0$, thanks to~\eqref{bound:DST} we can find~$\tau \in (0, 1)$, depending only on~$n$,~$\alpha$, and~$\| h \|_{C^{1,\a}(\R^{n - 1})}$, such that
\begin{equation}\label{PsiCinB}
\S \big( {\mathcal{C}_{4r}^+} \big)\subset \Omega\cap B_R
\end{equation}
for all~$r\in (0,\tau R)$. Let~$r \in (0, \frac{r_0}{8})$ be as such and define~$\tilde{u} \coloneqq u \circ \S$. As~$u$ is a weak su\-per\-so\-lu\-tion of~$Q u = 0$ in~$\Omega$, simple computations show that~$\tilde{u} \in W^{1,p}(\mathcal{C}_{2r}^+)\cap \W^{s,q}(\mathcal{C}_{2r}^+)\cap C^0\big(\overline{\mathcal{C}^+_{2r}}\big)$ is a weak supersolution of~$\widetilde{Q} \tilde{u} = 0$ in~$\mathcal{C}^+_{2 r}$, where~$\widetilde{Q}$ is defined by~$\widetilde{Q} \coloneqq \widetilde{Q}_{\text{L}} + \widetilde{Q}_{\text{N}}$ and 
\begin{align*}
\widetilde{Q}_{\text{L}} \, \tilde{u}(y) & \coloneqq - \dive \widetilde{A}(y, D\tilde{u}(y)), \quad \mbox{with } \widetilde{A}(y, z) \coloneqq \mathfrak{c}(y) A \big( {\S(y), z  (D\T \circ \S)(y)} \big) (D\T \circ \S)(y)^T, \\
\widetilde{Q}_{\text{N}} \, \tilde{u}(y) & \coloneqq 2 \, \mathfrak{c}(y) \, \PV \int_{\R^n} \phi \big( {\tilde{u}(y) - \tilde{u}(z)} \big) \frac{B \big( {\S(y), \S(z)} \big)}{|\S(y) - \S(z)|^{n + s q}} \, \mathfrak{c}(z) \, dz,
\end{align*}
for all~$y \in \mathcal{C}^+_{2 r}$.

\subsection*{Step 2: Definition of a subsolution~\texorpdfstring{$v$}{} for~\texorpdfstring{$\widetilde{Q}_L$}{}}

To establish~\eqref{normderneg}, we need to construct a suitable subsolution. Let~$\beta \in (0, 1)$,~$\delta \in \left( 0, \frac{1}{4} \right]$, and define
$$
\varphi(y) \coloneqq - \frac{\delta}{r^2} \, |y'|^2 + \frac{y_n}{2 r} + \frac{y_n^{1 + \beta}}{2 r^{1 + \beta}} \quad \mbox{for } y \in \mathcal{C}^+_{2 r, r}.
$$
Note that~$\varphi \in C^\infty(\mathcal{C}^+_{2 r, r})$ and
$$
D \varphi(y) = \left( - \frac{2 \delta}{r^2} \, y', \frac{1}{2 r} + \frac{1 + \beta}{2 r^{1 + \beta}} \, y_n^\beta \right) \quad \mbox{for all } y \in \mathcal{C}^+_{2 r, r},
$$
so that, in particular,~$D \varphi \ne 0$ in~$\mathcal{C}^+_{2 r, r}$. Also, the matrix~$D^2\vphi$ is diagonal and
\begin{equation}\label{diagonal}
    \partial_{y_i y_i}^2 \vphi(y)=-\frac{2 \d}{r^2}\quad\text{and}\quad \partial_{y_n y_n}^2 \vphi(y)=\frac{\b(1+\b)}{2\,r^{1+\b}} \,y_n^{\b-1},
\end{equation}
for every~$i = 1, \ldots, n - 1$. For~$\varepsilon \in (0, 1]$ to be chosen later, we set~$v = v_\varepsilon \coloneqq \varepsilon \varphi$.

We claim that, if~$\beta \in (0, \alpha)$ and~$\delta$ is small enough, in dependence of~$n$,~$p$,~$\L$,~$\alpha$,~$\beta$, and~$\S$ only, it holds
\begin{equation} \label{QLclaim}
\widetilde{Q}_L v_\varepsilon(y) \le 0 \quad \mbox{for all } y \in \mathcal{C}^+_{r, \delta r} \mbox{ and } \varepsilon \in (0, 1].
\end{equation}
%To verify this, we write
%\begin{equation*}
%    \widetilde{A}(y,z)\coloneqq \mathfrak c(y)\,A\big(\S(y),\,z\,(D\T\circ\S)(y) \big)\,(D\T\circ\S)(y)^T\,,
%\end{equation*}
%so that 
%\begin{equation*}
%    \widetilde{Q}_Lv(y)=-\mathrm{div}\big(\widetilde{A}(y,Dv) \big)\,.
%\end{equation*}
To verify this, we first observe that, by exploiting the structural assumptions~\eqref{ass:A} and~\eqref{ass:A2}, together with~\eqref{detundercontrol},~\eqref{bound:DST},~\eqref{quasid}, and~\eqref{bDDST}, the function~$\widetilde{A}$ satisfies
\begin{equation}\label{new:coercive}
    \begin{dcases}
        \big|\partial_y \widetilde{A}(y,z)\big|\leq C\,\big(  |z|^2+\mu^2\big)^{\frac{p-2}{2}}\,|z| \, y_n^{\a-1}
        \\
        \big|\partial_z \widetilde{A}(y,z)|\leq C\,\big(  |z|^2+\mu^2\big)^{\frac{p-2}{2}}
        \\
        \<\partial_z \widetilde{A}(y,z)\xi,\xi\>\geq C^{-1} \,\big(  |z|^2+\mu^2\big)^{\frac{p-2}{2}}\,|\xi|^2
    \end{dcases}
 \quad\, \text{for all } y\in \mathcal{C}^+_{2r}, \, z \in\R^n\setminus\{0\}, \, \xi\in \R^n.
\end{equation}
Within this step,~$C$ depends only on~$n$,~$p$,~$\Lambda$,~$\alpha$,~$\beta$, and~$\S$.
%~$\widetilde{Q}_L^{\vphantom{(1)}} v = \widetilde{Q}_L^{(1)} v + \widetilde{Q}_L^{(2)} v$, with
%\begin{align*}
%\widetilde{Q}_L^{(1)} v \coloneqq & - \mathfrak{c} \, \partial_k A^\ell \big( {Dv (D\T \circ \S)} \big) (\partial_k \T^i \circ \S) (\partial_\ell \T^j \circ \S) \, \partial^2_{i j} v, \\
%\widetilde{Q}_L^{(2)} v \coloneqq & - A^\ell \big( {Dv (D\T \circ \S)} \big) \left( {\mathfrak{c} \, (\partial_{k \ell}^2 \T^j \circ \S) \partial_j S^k + \partial_j \mathfrak{c} \, (\partial_\ell \T^j \circ \S)} \right) \\
%& - \mathfrak{c} \, \partial_k A^\ell \big( {Dv (D\T \circ \S)} \big) (\partial_\ell \T^j \circ \S) (\partial_{k m}^2 \T^i \circ \S) \partial_j \S^m \partial_i v.
%\end{align*}
Since~$Dv \neq 0$,~$D^2 v$ is diagonal, and~$\widetilde A\in C^1\big(\mathcal{C}^+_{2r}\times (\R^n\setminus\{0\})\big)$, the chain rule entails
$$
\widetilde{Q}_L v = - \sum_{i=1}^n \partial_{y_i} \widetilde {A}^i(y,Dv) - \sum_{i=1}^{n}\partial_{z_i}\widetilde{A}^i(y,Dv)\,\partial_{y_i y_i}v \quad \mbox{in } \mathcal{C}^+_{2 r, r}.
$$
Therefore, by using~\eqref{diagonal},~\eqref{new:coercive}, and the fact that
\begin{equation*}
    \frac{\varepsilon}{2 r}\leq |Dv|\leq \frac{C \, \varepsilon}{r} \quad \mbox{in } \mathcal{C}^+_{2 r, r}, 
\end{equation*}
we obtain
\begin{align*}
\widetilde{Q}_L v(y) & \le - \varepsilon \, \frac{y_n^{\beta - 1}}{r^{1 + \beta}} \left( \frac{\varepsilon^2}{r^2} + \mu^2 \right)^{\frac{p - 2}{2}} \left\{ \frac{\beta(1 + \beta)}{C} - C \, \delta \left( \frac{y_n}{r} \right)^{1 - \beta} - C \, r^{1 + \beta} y_n^{\alpha - \beta} \right\} \\
& \le - \varepsilon \, \frac{y_n^{\beta - 1}}{r^{1 + \beta}} \left( \frac{\varepsilon^2}{r^2} + \mu^2 \right)^{\frac{p - 2}{2}} \left\{ \frac{1}{C} - C \, \delta^{2 - \beta} - C \, r^{1 + \alpha} \delta^{\alpha - \beta} \right\} \quad \mbox{for all } y \in \mathcal{C}^+_{r, \delta r},
\end{align*}
%\begin{equation}
%    -\widetilde{Q}_L v(y)\geq \frac{c\,y_n^{\b-1}}{r^ 
% {1+\b}}\bigg[\frac{\varepsilon^2}{r^2}+\mu^2 \bigg]^{\frac{p-2}{2}}\bigg\{  \b(1+\b)-\frac{C\,\d}{r^{1-\b}}\,y_n^{1-\b}-C\,\Big(\frac{\varepsilon^2}{r^2}
%+\mu^2 \Big)^{\frac{1}{2}}\,r^{\b+1}\,y_n^{\a-\b}\bigg\},
%\end{equation}
%for all~$y\in \mathcal{C}^+_{2r, r}$ \todo{for some constant $c$ bla bla}.
From this, claim~\eqref{QLclaim} immediately follows by taking~$\delta$ sufficiently small.

\subsection*{Step 3: Extending~\texorpdfstring{$v$}{} to a subsolution for~\texorpdfstring{$\widetilde{Q}_N$}{}}

Next, we extend~$v$ to a bounded function~$\tilde{v}$ defined on the whole~$\R^n$ satisfying
\begin{equation} \label{QNclaim}
\widetilde{Q}_N \tilde{v}(y) \le 0 \quad \mbox{for all } y \in \mathcal{C}^+_{r, \delta r},
\end{equation}
provided~$\delta$ is sufficiently small. We stress that the nonlocal operator~$\widetilde{Q}_N \tilde{v}$ is well-defined in~$\mathcal{C}_{r, \delta r}^+$ in the pointwise sense, as~$\tilde{v}$ is globally bounded and smooth inside~$\mathcal{C}_{r, \delta r}^+$ with non-vanishing gradient---this can be easily justified through the computations made, for instance, in~\cite[Section~3]{KKL19}.

In order to achieve this, we let~$\widetilde{\varphi}$ be any bounded, Lipschitz continuous, and compactly supported extension of~$\varphi$ to~$\R^n$ satisfying 
\begin{equation} \label{phitildecond}
\begin{dcases}
\widetilde{\varphi}(y) = - \frac{\delta}{r^2} |y'|^2 & \quad \mbox{for all } y \in B_{2 r}' \times (- r, 0], \vphantom{y \in B_{\frac{r}{4}} \!  \left(\frac{3 r}{2} \, e_n \right)} \\
\widetilde{\varphi}(y) = M & \quad \mbox{for all } y \in B_{\frac{r}{4}} \!  \left(\frac{3 r}{2} \, e_n \right), \\
\widetilde{\varphi}(y) \le 0 & \quad \mbox{for all } y \in \R^n \setminus \Big( {\mathcal{C}^+_{r, \delta r} \cup \big( {B'_{3 r} \times [\delta r, 2r)} \big)} \Big), \vphantom{y \in B_{\frac{r}{4}} \!  \left(\frac{3 r}{2} \, e_n \right)} \\
-2 \le \widetilde{\varphi}(y) \le M & \quad \mbox{for all } y \in \R^n, \vphantom{y \in B_{\frac{r}{4}} \!  \left(\frac{3 r}{2} \, e_n \right)}
\end{dcases}
\end{equation}
for some~$M \ge 2$ to be chosen suitably large. As before, we also set~$\tilde{v} = \tilde{v}_\varepsilon \coloneqq \varepsilon \widetilde{\varphi}$.

We write
\begin{equation} \label{QNdeco}
\widetilde{Q}_N \tilde{v}(y) = 2 \, \mathfrak{c}(y) \Big( {I(y) + E_1(y) + E_2(y)} \Big),
\end{equation}
where
\begin{align*}
I(y) & \coloneqq \PV \int_{B_{\frac{r}{2}}(y)} \phi \big( {\tilde{v}(y) - \tilde{v}(z)} \big) \frac{B \big( {\S(y), \S(z)} \big)}{|\S(y) - \S(z)|^{n + s q}} \, \mathfrak{c}(z) \, dz, \\
E_1(y) & \coloneqq \int_{B_{\frac{r}{4}} \! \left(\frac{3 r}{2} e_n \right)} \phi \big( {\tilde{v}(y) - \tilde{v}(z)} \big) \frac{B \big( {\S(y), \S(z)} \big)}{|\S(y) - \S(z)|^{n + s q}} \, \mathfrak{c}(z) \, dz, \\
E_2(y) & \coloneqq \int_{\R^n \setminus \left( B_{\frac{r}{2}}(y) \cup B_{\frac{r}{4}} \!  \left(\frac{3 r}{2} e_n \right) \right)} \phi \big( {\tilde{v}(y) - \tilde{v}(z)} \big) \frac{B \big( {\S(y), \S(z)} \big)}{|\S(y) - \S(z)|^{n + s q}} \, \mathfrak{c}(z) \, dz.
\end{align*}

%In order to estimate each term of \eqref{QNdeco}, let us point out a few useful estimates.

By using that~$\widetilde{\varphi} = \varphi \le 1$ in~$\mathcal{C}^+_{r, \delta r}$,~$\widetilde{\varphi} = M$ in~$B_{\frac{r}{4}} \!  \left(\frac{3 r}{2} \, e_n \right)$, and~$\widetilde{\varphi}\geq -2$ in~$\R^n$, in combination with the monotonicity of~$\phi$, bounds~\eqref{detundercontrol} and~\eqref{SLip}, as well as assumptions~\eqref{ass:B}-\eqref{ass:phi}, we obtain
\begin{equation} \label{E1est}
\begin{aligned}
E_1(y) & \le - \frac{\varepsilon^{q - 1} (M - 1)^{q - 1}}{\L^2} \int_{B_{\frac{r}{4}} \!  \left(\frac{3 r}{2} e_n \right)} \frac{\mathfrak{c}(z)}{|\S(y) - \S(z)|^{n + s q}} \, dz \\
& \le - \frac{\varepsilon^{q- 1} M^{q - 1}}{C \, r^{s q}} \quad \mbox{for all } y \in \mathcal{C}^+_{r, \delta r}
\end{aligned}
\end{equation}
%Next, since~$\widetilde{\varphi} = \varphi \le 1$ in~$\mathcal{C}^+_{r, \delta r}$,~$\widetilde{\varphi}\geq -2$ in $\R^n$, and taking advantage again of the monotonicity of~$\phi$,~\eqref{ass:B},~\eqref{ass:phi},~\eqref{detundercontrol}, and~\eqref{SLip}, we estimate
and
\begin{equation} \label{E2est}
\begin{aligned}
E_2(y) & \le \L^2 3^{q - 1} \varepsilon^{q - 1} \int_{\R^n \setminus \left( B_{\frac{r}{2}}(y) \cup B_{\frac{r}{4}} \!  \left(\frac{3 r}{2} e_n \right) \right)} \frac{\mathfrak{c}(z)}{|\S(y) - \S(z)|^{n + s q}} \, dz \\
& \leq \frac{C\,\varepsilon^{q-1}}{r^{sq}} \quad \mbox{for all } y \in \mathcal{C}^+_{r, \delta r}.
\end{aligned}
\end{equation}
Here,~$C$ is a constant depending only on~$n$,~$q$,~$s$,~$\Lambda$,~$\alpha$,~$\beta$, and~$\S$.

We now inspect the term~$I$. We write
\begin{equation} \label{useful0}
\frac{B\big(\S(y), \S(z)\big)\, \mathfrak{c}(z)}{|\S(y)-\S(z)|^{n+sq}} = \frac{B\big(\S(y),\S(y)\big)\,\mathfrak{c}(y)}{|D\S(y)\,(y-z)|^{n+sq}} + \mathcal{R}_1(y,z)+\mathcal{R}_2(y,z)+\mathcal{R}_3(y,z),
\end{equation}
with
\begin{align*}
\mathcal{R}_1(y,z) & \coloneqq B\big({\S(y),\S(y)}\big)\,\mathfrak{c}(y)\,\bigg( {\frac{1}{|\S(y)-\S(z)|^{n+sq}}-\frac{1}{|D\S(y)\,(y-z)|^{n+sq}}} \bigg), \\
\mathcal{R}_2(y,z) & \coloneqq B\big( {\S(y),\S(y)} \big) \, \frac{\mathfrak{c}(z)-\mathfrak{c}(y)}{|\S(y)-\S(z)|^{n+sq}}, \\
\mathcal{R}_3(y,z) & \coloneqq \mathfrak{c}(z) \, \frac{B\big( {\S(y),\S(z)} \big)-B\big( {\S(y),\S(y)} \big)}{|  \S(y)-\S(z)|^{n+sq}}.
\end{align*}
%\begin{equation}\label{useful0}
%    \begin{split} 
% &\frac{B\big(\S(y), \S(z)\big)\, \mathfrak c(z)}{|\S(y)-\S(z)|^{n+sq}} =
%\\
%&  \hspace{0.5cm} 
% =\frac{B\big(\S(y),\S(y)\big)\,\mathfrak c(y)}{|  \S(y)-\S(z)|^{n+sq}}  +\frac{B\big(\S(y),\S(y)\big)\,[\mathfrak c(z)-\mathfrak c(y)]}{|  \S(y)-\S(z)|^{n+sq}}+\frac{\big[B\big(\S(y),\S(z)\big)-B\big( 
%\S(y),\S(y) \big)\big]\,\mathfrak c(z)}{|  \S(y)-\S(z)|^{n+sq}}
%\\
% & \hspace{0.5cm}  =\frac{B\big(\S(y),\S(y)\big)\,\mathfrak c(y)}{|D\S(y)\,(y-z)|^{n+sq}}  +B\big(\S(y),\S(y)\big)\,\mathfrak c(y)\,\bigg\{ \frac{1}{|\S(y)-\S(z)|^{n+sq}}-\frac{1}{|D\S(y)\,(y-z)|^{n+sq}}\bigg\}
%\\
%& \hspace{1cm}+\frac{B\big(\S(y),\S(y)\big)\,[\mathfrak c(z)-\mathfrak c(y)]}{|  \S(y)-\S(z)|^{n+sq}}   +\frac{\big[B\big(\S(y),\S(z)\big)-B\big( 
%\S(y),\S(y) \big)\big]\,\mathfrak c(z)}{|  \S(y)-\S(z)|^{n+sq}}
%\\
%&\hspace{0.3cm} \eqqcolon \frac{B\big(\S(y),\S(y)\big)\,\mathfrak c(y)}{|D\S(y)\,(y-z)|^{n+sq}}+\mathcal{R}_1(y,z)+\mathcal{R}_2(y,z)+\mathcal{R}_3(y,z)\,, 
%    \end{split}
%\end{equation}
We claim that, for~$i=1,2,3$ and for all~$y\in \mathcal{C}^+_{r, \delta r}$, it holds
\begin{equation}\label{useful1}
|\mathcal{R}_i(y,z)|\leq C \begin{cases}
    |y-z|^{-n-sq+\a} & \quad\text{for all } z\in B_{\frac{r}{2}}(y),
    \\
    y_n^{\a-1}|y-z|^{-n-sq+1} & \quad \text{for all } z\in B_{\frac{y_n}{2}}(y).
\end{cases}
\end{equation}
By taking advantage of~\eqref{ass:B},~\eqref{ass:B2},~\eqref{detundercontrol},~\eqref{detundercontrol1}, and~\eqref{SLip}, we immediately deduce the validity of~\eqref{useful1} for~$i = 2$ as well as the following stronger inequality for~$i = 3$:
$$
|\mathcal{R}_3(y,z)| \le C \, |y - z|^{- n - s q + 1} \quad \mbox{for all } y \in \mathcal{C}^+_{r, \delta r}, \, z \in B_{\frac{r}{2}}(y).
$$
On the other hand, using~\eqref{ass:B},~\eqref{detundercontrol},~\eqref{SLip}, and~\eqref{quasid}, together with the numerical inequality~$\big| {A^P - B^P} \big| \le P (A + B)^{P - 1} |A - B|$, valid for every~$P > 1$ and~$A, B \ge 0$, we find
\begin{align*}
|\mathcal{R}_1(y,z)| & \le C \, \frac{\Big| {|D\S(y)(z-y)|^{n+sq} - |\S(z)-\S(y)|^{n+sq}} \Big|}{|\S(y)-\S(z)|^{n+sq}|D\S(y)(z-y)|^{n+sq}} \\
& \le C \, \frac{\big| {\S(z) - \S(y) - D\S(y)(z - y)} \big|}{|y - z|^{n + s q + 1}},
\end{align*}
from which~\eqref{useful1} for~$i = 1$ follows at once by noticing that
\begin{align*}
\big| {\S(z) - \S(y) - D\S(y)(z - y)} \big| & \le |y - z| \int_0^1 \big| {D\S(t z + (1 - t) y) - D\S(y)} \big| \, dt \\
& \le C \begin{cases}
|y - z|^{1 + \alpha} & \quad \mbox{for all } z \in B_{\frac{r}{2}}(y), \\
y_n^{\alpha - 1} |y - z|^2 & \quad \mbox{for all } z \in B_{\frac{y_n}{2}}(y),
\end{cases}
\end{align*}
thanks to~\eqref{bound:DST},~\eqref{bDDST}, and the fact that the segment joining~$y$ and~$t z + (1 - t) y$ lies in the half-space~$\left\{ w \in \R^n : w_n \ge \frac{y_n}{2} \right\}$ for every~$t \in [0, 1]$.

Observe now that
$$
y_n^{\alpha - 1} \int_{B_{\frac{y_n}{2}}(y)} \frac{dz}{|y - z|^{n + s q - q}} + \int_{B_{\frac{r}{2}}(y) \setminus B_{\frac{y_n}{2}}(y)} \frac{dz}{|y - z|^{n + s q - \alpha - q + 1}} \le C \, r^{(1 - s) q + \alpha - 1} L_\alpha \! \left( \frac{y_n}{r} \right),
$$
where, for~$t \in (0, 1)$ and~$\gamma \in \R$, we set
$$
L_\gamma(t) \coloneqq \begin{cases}
t^{(1 - s) q + \gamma - 1} & \quad \mbox{if } (1 - s) q + \gamma < 1, \\
- \log t & \quad \mbox{if } (1 - s) q + \gamma = 1, \\
1 & \quad \mbox{if } (1 - s) q + \gamma > 1.
\end{cases}
$$
Also, by means of~\eqref{ass:phi}, of the fundamental theorem of calculus, and of the estimate
$$
|D\tilde{v}(y)|\leq \frac{C\,\varepsilon}{r}\quad\text{for all } y \in B'_{2r} \times (-r,r),
$$
we see that
$$
\left| \phi \big( {\tilde{v}(y) - \tilde{v}(z)} \big) \right| \le C \, \frac{\varepsilon^{q - 1}}{r^{q - 1}} \, |y - z|^{q - 1} \quad \mbox{for all } y \in \mathcal{C}^+_{r, \delta r}, \, z \in B_{\frac{r}{2}}(y).
$$
In light of these facts,~\eqref{useful0},~\eqref{useful1}, and recalling the definition of~$I$, we have that
$$
I(y) = \mathfrak{c}(y) B \big( {\S(y), \S(y)} \big) \, \PV \int_{B_{\frac{r}{2}}(y)} \frac{\phi \big( {\tilde{v}(y) - \tilde{v}(z)} \big)}{| D\S(y)(y-z)|^{n + s q}} \, dz + \mathcal{E}(y),
$$
with
\begin{equation} \label{Errest}
|\mathcal{E}(y)| \le C \, \varepsilon^{q - 1} r^{- s q + \alpha} L_\alpha \! \left( \frac{y_n}{r} \right) \quad \mbox{for every } y \in \mathcal{C}^+_{r, \delta r}.
\end{equation}
Since, by symmetry,
$$
\PV \int_{B_{\frac{r}{2}}(y)} \frac{\phi \big( {D \tilde{v}(y) \cdot (y - z)} \big)}{|D\S(y)(y-z)|^{n + s q}} \, dz = 0,
$$
the previous identity can be rewritten as
\begin{equation} \label{I=I1+err}
I(y) = \mathfrak{c}(y) B \big( {\S(y), \S(y)} \big) I_1(y) + \mathcal{E}(y) \quad \mbox{for every } y \in \mathcal{C}^+_{r, \delta r},
\end{equation}
with~$\mathcal{E}$ satisfying~\eqref{Errest} and
$$
I_1(y) \coloneqq \int_{B_{\frac{r}{2}}(y)} \frac{\phi \big( {\tilde{v}(y) - \tilde{v}(z)} \big) - \phi \big( {D \tilde{v}(y) \cdot (y - z)} \big)}{|D\S(y)(y-z)|^{n + s q}} \, dz.
$$

We now claim that
\begin{equation} \label{claimI1}
I_1(y) \le - \frac{1}{C} \frac{\varepsilon^{q - 1} L_0 \! \left( \frac{y_n}{r} \right)}{r^{s q}} \quad \mbox{for all } y \in \mathcal{C}^+_{r, \delta r},
\end{equation}
for some constant~$C \ge 1$, provided~$\delta$ is small enough, all in dependence of~$n$,~$q$,~$s$,~$\Lambda$, and~$\beta$ only.

The remaining of Step~3 is essentially occupied by the proof of this claim. First, we apply the change of variables~$\ell \coloneqq \frac{z - y}{y_n}$ and observe that~\eqref{claimI1} is equivalent to showing that
\begin{equation} \label{hopfclaim1}
\begin{aligned}
& \int_{B_{\frac{1}{2 x_n}}} \Bigg\{ {\widetilde{\phi} \, \bigg( {- 2 \delta \Big( {2 x' \cdot \ell' + x_n |\ell'|^2} \Big) + (1 + \ell_n)_+ - 1 + x_n^{\beta} \left( (1 + \ell_n)_+^{1 + \beta} - 1 \right)} \bigg)} \\
& \hspace{80pt} - {\widetilde{\phi} \, \bigg( {\left( - 4 \delta x', 1 + (1 + \beta) x_n^\beta \right) \cdot \ell} \bigg)} \Bigg\} \frac{d\ell}{|D\S(r x)\,\ell|^{n + s q}} \ge \frac{1}{C} \frac{L_0(x_n)}{x_n^{(1 - s) q - 1}},
\end{aligned}
\end{equation}
for all~$x \coloneqq \frac{y}{r} \in \mathcal{C}^+_{1, \delta}$, and where~$\widetilde{\phi} \coloneqq (\varepsilon x_n)^{1 - q} \, \phi \! \left( \frac{\varepsilon x_n}{2} \, \cdot \, \right)$. We make a further substitution and consider the new variables~$w$ defined by~$\ell = (w', d \cdot w)$, with
\begin{equation}\label{deffd}
d' \coloneqq \frac{4 \delta x'}{1 + (1 + \beta) x_n^\beta} \quad \mbox{and} \quad d_n \coloneqq \frac{1}{1 + (1 + \beta) x_n^\beta}.
\end{equation}
In particular, it defines a bi-Lipschitz map on~$\R^n$ such that
\begin{equation}\label{ellw}
\frac{1}{2}\,|w|\leq \big| {(w',d\cdot w)} \big| \leq 2\,|w|\quad\text{for all } w\in \R^n,
\end{equation}
provided~$\delta$ is sufficiently small. Inequality~\eqref{hopfclaim1} then becomes
\begin{equation} \label{Hopfclaimtechequiv}
\int_{\R^n} \mathcal{F}(w) \, dw \ge \frac{1}{C} \frac{L_0(x_n)}{d_n x_n^{(1 - s) q - 1}},
\end{equation}
where
\begin{align*}
\mathcal{F}(w) & \coloneqq \Bigg\{ {\widetilde{\phi} \, \bigg( {- 2 \delta \Big( {2 x' \cdot w' + x_n |w'|^2} \Big) + (1 + d \cdot w)_+ - 1 + x_n^{\beta} \left( (1 + d \cdot w)_+^{1 + \beta} - 1 \right)} \bigg)} \\
& \hspace{215pt} - {\widetilde{\phi}(w_n)} \Bigg\} \frac{\chi_{[0, 1)} \! \left( 2 x_n \sqrt{|w'|^2 + (d \cdot w)^2} \right)}{| D\S(r x)\,(w', d \cdot w)|^{n + s q}}.
\end{align*}
We now look for a lower bound on~$\mathcal{F}$. First, let~$w \in B_{\frac{1}{2}}$. In this case, we observe that
$$
|d \cdot w| = \left| \frac{4 \delta x' \cdot w' + w_n}{1 + (1 + \beta) x_n^\beta} \right| \le \frac{4 \delta |w'| + |w_n|}{1 + (1 + \beta) x_n^\beta} \le (1 + 4 \delta) |w| \le \frac{3}{4},
$$
if we take~$\delta \in \left( 0, \frac{1}{8} \right]$. Hence, writing
$$
N_q(w) \coloneqq \begin{cases}
|w|^{q - 2} & \quad \mbox{if } q \ge 2, \\
|w_n|^{q - 2} & \quad \mbox{if } q \in (1, 2),
\end{cases}
$$
exploiting the monotonicity of~$\phi$, inequalities~\eqref{quasid},~\eqref{ellw}, and
$$
\left| (1+d\cdot w)^{1+\b}-1 \right| \leq 4 \, |d\cdot w| \leq 6 \, |w| \quad \mbox{for every } w \in B_{\frac{1}{2}},
$$
as well as the bound
\begin{equation} \label{elemineab}
|\widetilde{\phi}(a) - \widetilde{\phi}(b)| \le C_q \Lambda \big( {|a| + |b|} \big)^{q - 2} |a - b| \quad \mbox{for all } (a, b) \in \R^2 \setminus \{ (0, 0) \},
\end{equation}
which holds for some constant~$C_q > 0$ depending only on~$q$ thanks to the fact that~$\phi$ fulfils assumption~\eqref{ass:phi2}---see, e.g.,~\cite[Lemma~2.1]{dam}---, and ultimately recalling~\eqref{deffd} and the fact that~$x\in \mathcal{C}^+_{1,\d}$, we estimate
\begin{align*}
|\mathcal{F}(w)| & \le C \, \frac{N_q(w)}{|w|^{n + s q}} \left| - 2 \delta x_n |w'|^2 + x_n^{\beta} \left( (1 + d \cdot w)^{1 + \beta} - 1 - (1 + \beta) d \cdot w \right) \right| \\
& \le C \, \frac{N_q(w)}{|w|^{n + s q}} \, \left( \delta^2 |w'|^2 + x_n^{\beta} \left( d \cdot w \right)^{2} \right) \le C \delta^\beta \frac{N_q(w)}{|w|^{n - 2 + s q}} \qquad \mbox{for a.e.~} w \in B_{\frac{1}{2}}.
\end{align*}
On the other hand, the monotonicity of~$\phi$ and again~\eqref{quasid},~\eqref{ellw}, and~\eqref{elemineab} lead us to
\begin{align*}
\mathcal{F}(w) & \ge \frac{\widetilde{\phi} \left( - 5 \delta |w'| + d \cdot w - x_n^{\beta} \right) - \widetilde{\phi}(w_n)}{\big| { D\S(r x)\,(w', d \cdot w)} \big|^{n + s q}} \, \chi_{[0, 1)} \! \left( 2 x_n \sqrt{|w'|^2 + (d \cdot w)^2} \right) \\
& \ge - C \, \frac{N_q(w)}{|w|^{n + s q}} \left( \delta |w| + x_n^\beta + |d \cdot w - w_n| \right) \chi_{B_{\frac{1}{x_n}}}(w) \\
& \ge - C \delta^\beta \frac{N_q(w)}{|w|^{n - 1 + s q}} \, \chi_{B_{\frac{1}{x_n}}}(w) \qquad \mbox{for a.e.~} w \in \R^n \setminus B_{\frac{1}{2}}.
\end{align*}
These two estimates yield that
\begin{equation} \label{festinwnge-1}
\int_{\{ w_n \ge - 10 \}} \mathcal{F}(w) \, dw \ge - C \delta^\beta \int_{B_{\frac{1}{x_n}}} \frac{N_q(w) \min \{ |w|, 1 \}}{|w|^{n - 1 + s q}} \, dw \ge - C \delta^\beta \frac{L_0(x_n)}{x_n^{(1 - s) q - 1}}.
\end{equation}
The last inequality is straightforward if~$q \ge 2$. When~$q \in (1, 2)$, it is a consequence of the following calculation, valid for every~$R \ge 2$:
\begin{align*}
& \int_{B_R} \frac{|w_n|^{q - 2} \min \{ |w|, 1 \}}{|w|^{n - 1 + sq}} \, dw \le C \int_0^R \Bigg( {\int_0^R \, \frac{\min \left\{ \sqrt{\rho^2 + w_n^2}, 1 \right\}}{\left( \rho^2 + w_n^2 \right)^{\frac{n - 1 + s q}{2}}} \, \rho^{n - 2} \, d\rho} \Bigg) \frac{dw_n}{w_n^{2 - q}} \\
% & = 2 \Haus^{n - 2}(\partial B_1') \int_0^R \Bigg( {\int_0^{\frac{R}{w_n}} \, \frac{\min \left\{ w_n \sqrt{1 + t^2}, 1 \right\}}{\left( 1 + t^2 \right)^{\frac{n - 1 + s q}{2}}} \, t^{n - 2} \, dt} \Bigg) \frac{dw_n}{w_n^{2 - (1 - s) q}} \\
%& \le C_n \int_0^{+\infty} \Bigg( {\int_0^{\frac{R}{t}} \, \frac{\min \left\{ w_n \sqrt{1 + t^2}, 1 \right\}}{w_n^{2 - (1 - s) q}} \, dw_n} \Bigg) \frac{t^{n - 2}}{\left( 1 + t^2 \right)^{\frac{n - 1 + s q}{2}}} \, dt \\
& \hspace{40pt} \le C \int_0^{+\infty} \Bigg( {\sqrt{1 + t^2} \int_0^{\frac{1}{\sqrt{1 + t^2}}} \, \frac{dw_n}{w_n^{1 - (1 - s) q}} + \int_{\frac{1}{\sqrt{1 + t^2}}}^{\frac{R}{t}} \, \frac{dw_n}{w_n^{2 - (1 - s) q}}} \Bigg) \frac{t^{n - 2}}{\left( 1 + t^2 \right)^{\frac{n - 1 + s q}{2}}} \, dt \\
& \hspace{40pt} \le C \left\{ \int_0^{+\infty} \frac{t^{n - 2}}{\left( 1 + t^2 \right)^{\frac{n + q - 2}{2}}} \, dt + \int_0^{+\infty} \Bigg( {\int_{\frac{1}{\sqrt{1 + t^2}}}^{\frac{R}{t}} \, \frac{dw_n}{w_n^{2 - (1 - s) q}}} \Bigg) \frac{t^{n - 2}}{\left( 1 + t^2 \right)^{\frac{n - 1 + s q}{2}}} \, dt \right\} \\
%& \hspace{40pt} \le C \left\{ 1 + R^{(1 - s) q - 1} \int_0^{+\infty} L_0 \left( \frac{t}{R \sqrt{1 + t^2}} \right) \frac{t^{n - 1 - (1 - s) q}}{\left( 1 + t^2 \right)^{\frac{n - 1 + s q}{2}}} \, dt \right\} \\
%& \hspace{40pt} \le C + C R^{(1 - s) q - 1} \begin{cases}
%R^{1- (1 - s) q} \int_0^{+\infty} \frac{t^{n - 2}}{\left( 1 + t^2 \right)^{\frac{n + q - 2}{2}}} \, dt & \quad \mbox{if } (1 - s) q < 1 \\
%- \int_0^{+\infty} \log \left( \frac{t}{R \sqrt{1 + t^2}} \right) \frac{t^{n - 2}}{\left( 1 + t^2 \right)^{\frac{n + q - 2}{2}}} \, dt  & \quad \mbox{if } (1 - s) q = 1 \\
%\int_0^{+\infty} \frac{t^{n - 1 - (1 - s) q}}{\left( 1 + t^2 \right)^{\frac{n - 1 + s q}{2}}} \, dt  & \quad \mbox{if } (1 - s) q > 1
%\end{cases} \\
%& \hspace{40pt} \le C \left( 1 + R^{(1 - s) q - 1} \begin{cases}
%(1/R)^{(1 - s) q - 1} & \quad \mbox{if } (1 - s) q < 1 \\
%- \log (1/R)  & \quad \mbox{if } (1 - s) q = 1 \\
%1 & \quad \mbox{if } (1 - s) q > 1
%\end{cases} \right) \\
& \hspace{40pt} \le C R^{(1 - s) q - 1} L_0 \! \left( \frac{1}{R} \right).
\end{align*}
In light of~\eqref{festinwnge-1}, we are left with bounding the integral over~$\{ w_n < - 10 \}$. Using that~$\phi$ is odd and monotone, along with the bounds~\eqref{quasid} and~\eqref{ellw}, for~$w \in \{ w_n < - 10 \} \cap B_{1/\delta}$ we have 
\begin{align*}
\mathcal{F}(w) & \ge \frac{\widetilde{\phi}(- w_n) - \widetilde{\phi} \left( 5 \delta |w| + 2 \right)}{| D\S(r x)\,(w', d \cdot w)|^{n + s q}} \, \chi_{[0, 1)} \! \left( 2 x_n \sqrt{|w'|^2 + (d \cdot w)^2} \right) \\
& \ge \frac{1}{C} \frac{\widetilde{\phi}(- w_n) - \widetilde{\phi}(10)}{|w|^{n + s q}} \, \chi_{B_{\frac{1}{4 x_n}}}(w),
\end{align*}
while, using also that~$\phi$ satisfies the growth assumption~\eqref{ass:phi}, for~$w \in \{ w_n < - 10 \} \setminus B_{1/\delta}$ it holds
\begin{align*}
\mathcal{F}(w) & \ge \frac{\widetilde{\phi}(- w_n) - \widetilde{\phi}(10) + \widetilde{\phi}(10) - \widetilde{\phi} \left( 5 \delta |w| + 1 + x_n^{\beta} \right)}{| D\S(r x)\,(w', d \cdot w)|^{n + s q}} \, \chi_{[0, 1)} \! \left( 2 x_n \sqrt{|w'|^2 + (d \cdot w)^2} \right) \\
& \ge \frac{1}{C} \frac{\widetilde{\phi}(- w_n) - \widetilde{\phi}(10)}{|w|^{n + s q}} \, \chi_{B_{\frac{1}{4 x_n}}}(w) - \frac{C \delta^{q - 1}}{|w|^{n + 1 - (1 - s) q}} \, \chi_{B_{\frac{1}{x_n}}}(w).
\end{align*}
Putting these two estimates together and using the fact that, thanks to assumption~\eqref{ass:phi2},
$$
\widetilde{\phi}(- w_n) - \widetilde{\phi}(10) \ge \frac{2^{1 - q}}{(q - 1) \Lambda} \Big( {(-w_n)^{q - 1} - 10^{q - 1}} \Big) \quad \mbox{for every } w_n < -10,
$$
%$$
%\widetilde{\phi}(- w_n) - \widetilde{\phi}(10) = \frac{(\varepsilon x_n)^{2 - q}}{2} \int_{10}^{- w_n} \phi' \left( \frac{\varepsilon x_n t}{2} \right) \, dt \ge \frac{2^{1 - q}}{\Lambda} \int_{10}^{- w_n} t^{q - 2} \, dt = \frac{2^{1 - q}}{(q - 1) \Lambda} \left( (-w_n)^{q - 1} - 10^{q - 1} \right),
%$$
we compute, after a change of coordinates,
\begin{equation} \label{Hopftech2}
\begin{aligned}
& \int_{\{ w_n < - 10 \}} \mathcal{F}(w) \, dw \\
& \hspace{30pt} \ge \frac{1}{C} \int_{B_{\frac{1}{4 x_n}} \cap \{ w_n < -10\}} \frac{(-w_n)^{q - 1} - 10^{q - 1}}{|w|^{n + s q}} \, dw - C \delta^{q - 1} \int_{B_{\frac{1}{x_n}} \setminus B_{\frac{1}{\delta}}} \frac{dw}{|w|^{n + 1 - (1 - s) q}} \\
& \hspace{30pt} \ge \frac{10^{(1 - s) q - 1}}{C} \int_{B_{\frac{1}{40 x_n}} \cap \{ z > 1 \}} \frac{z_n^{q - 1} - 1}{|z|^{n + s q}} \, dz - C \delta^{q - 1} \frac{L_0(x_n)}{x_n^{(1 - s) q - 1}}.
\end{aligned}
\end{equation}
Through a further changing variables, we see that
\begin{align*}
\int_{B_{\frac{1}{40 x_n}} \cap \{ z > 1 \}} \frac{z_n^{q - 1} - 1}{|z|^{n + s q}} \, dz & \ge \int_1^{\frac{1}{80 x_n}} \Bigg( {\int_{B'_{z_n}} \frac{dz'}{\big( {|z'|^2 + z_n^2 } \big)^{\frac{n + s q}{2}}}} \Bigg) \big( {z_n^{q - 1} - 1} \big) \, dz_n \\
& = \Haus^{n - 2}(\partial B_1') \Bigg( {\int_0^1 \frac{t^{n - 2}}{\big( {1 + t^2} \big)^{\frac{n + s q}{2}}} \, d\rho} \Bigg) \Bigg( {\int_1^{\frac{1}{80 x_n}} \frac{z_n^{q - 1} - 1}{z_n^{1 + s q}} \, dz_n} \Bigg) \\
& \ge \frac{1}{C} \frac{L_0(x_n)}{x_n^{(1 - s) q - 1}}.
\end{align*}
From this,~\eqref{festinwnge-1}, and~\eqref{Hopftech2} it follows that inequality~\eqref{Hopfclaimtechequiv} holds for some constant~$C \ge 1$, provided~$\delta$ is sufficiently small, all in dependence of~$n$,~$q$,~$s$,~$\Lambda$, and~$\beta$ only. Consequently, claim~\eqref{claimI1} is proved.

From~\eqref{ass:B},~\eqref{detundercontrol},~\eqref{QNdeco},~\eqref{E1est},~\eqref{E2est},~\eqref{Errest},~\eqref{I=I1+err}, and~\eqref{claimI1} we immediately infer that
\begin{equation*}
    \widetilde{Q}_N \tilde{v}(y)\leq -\frac{\varepsilon^{q-1}}{r^{sq}}\bigg\{ \frac{M^{q-1} }{C} + \frac{1}{C} \, L_0\Big( {\frac{y_n}{r}} \Big)-C-C\,r^\a\,L_\a\Big(\frac{y_n}{r}\Big)  \bigg\} \quad \mbox{for all } y\in \mathcal{C}^+_{r, \delta r},
\end{equation*}
whence~\eqref{QNclaim} holds true, provided we take~$\delta$ sufficiently small (when~$(1 - s) q < 1$) or~$M$ sufficiently large (when~$(1 - s) q \ge 1$).

\subsection*{Step 4: Conclusion}
Taking advantage of~\eqref{QLclaim},~\eqref{QNclaim}, and of the locality of~$\widetilde{Q}_L$, we see that
$$
\widetilde{Q} \tilde{v}_\varepsilon = \widetilde{Q}_L v_\varepsilon + \widetilde{Q}_N \tilde{v}_\varepsilon \le 0 \quad \mbox{in } \mathcal{C}^+_{r, \delta r},
$$
for every~$\varepsilon \in (0, 1)$.

We now show that, by taking~$\varepsilon$ tiny enough, we can make~$\tilde{v}_\varepsilon = \varepsilon \widetilde{\varphi}$ smaller than~$\tilde{u}$ in the whole of~$\R^n$. Indeed, by~\eqref{phitildecond} we have that~$\tilde{v}_\varepsilon \le 0$ in~$\R^n \setminus \big( {\mathcal{C}^+_{r, \delta r} \cup \left( B_{3 r}' \times [\delta r, 2 r) \right)} \big)$ and that~$\tilde{v}_\varepsilon \le \varepsilon M$ in~$B_{3 r}' \times [\delta r, 2 r)$. As~$\S \big( {B_{3 r}' \times [\delta r, 2 r)} \big) \subset \subset \Omega$, thanks to~\eqref{Omegasupgraph},~\eqref{Psimapprop},~\eqref{PsiCinB}, and since~$u$ is positive and continuous in~$\Omega$, then~$m \coloneqq \inf_{B_{3 r}' \times [\delta r, 2 r)} \tilde{u} > 0$. By choosing~$\varepsilon \le \frac{m}{M}$ and recalling that~$\tilde{u}$ is non-negative in the whole of~$\R^n$, we infer that~$\tilde{v}_\varepsilon \le \tilde{u}$ in~$\R^n \setminus \mathcal{C}^+_{r, \delta r}$.

Thanks to the weak comparison principle of Proposition~\ref{WCPprop}, we then conclude that~$\tilde{v}_\varepsilon \le \tilde{u}$ in~$\R^n$. This yields in particular that
$$
\tilde{u}(0', y_n) \ge \varepsilon\, \varphi(0', y_n) \ge \frac{\varepsilon}{2r} \, y_n \quad \mbox{for all } y_n \in (0, r). 
$$
Recalling that~$D\T(0)=\mathrm{Id}_n$, by rephrasing this inequality in terms of the original variable~$x$ and of the function~$u$, we are easily led to~\eqref{normderneg}. The proof is thus complete.

\section{A strong maximum principle} \label{sec:smp}

\noindent
In this short conclusive section, we show how the Hopf lemma of Theorem~\ref{Hopflemma} yields the following strong maximum (or, better, minimum) principle for the operator~$Q$.

\begin{proposition} \label{SMPprop}
Let~$\Omega \subset \R^n$ be a bounded open set. Assume that~$A$,~$B$, and~$\phi$ satisfy hypotheses~\eqref{ass:A},~\eqref{ass:B}, and~\eqref{ass:phi}. Let~$u \in W^{1, p}(\Omega) \cap \W^{s, q}(\Omega) \cap C^1(\Omega)$ be a non-negative weak supersolution of~$Qu = 0$ in~$\Omega$. Then, either~$u > 0$ in~$\Omega$ or~$u \equiv 0$ in~$\R^n$.
\end{proposition}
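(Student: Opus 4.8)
The plan is to argue by contradiction, reducing matters to the Hopf boundary point lemma (Theorem~\ref{Hopflemma}) applied on a ball inscribed in the positivity set of $u$. Assume that $u \not\equiv 0$ in $\R^n$; the goal is to show that $u > 0$ everywhere in $\Omega$.

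I would first record a preliminary fact: $u$ cannot vanish identically on any connected component $\Omega_0$ of $\Omega$. Indeed, if $u \equiv 0$ on $\Omega_0$, then $Du \equiv 0$ there and $A(x, 0) = 0$ by the first line of~\eqref{ass:A}, so the local part of the weak supersolution inequality~\eqref{QuleQvweak} (with $v \equiv 0$) vanishes for every non-negative $\varphi \in C^\infty_c(\Omega_0)$; since $u(x) = 0 \le u(y)$ for $x \in \Omega_0$ and $y \in \R^n$, the oddness and monotonicity of $\phi$ make the nonlocal part non-positive, hence equal to zero, and then — using $B \ge \Lambda^{-1}$ and $\phi(t)t \ge \Lambda^{-1}|t|^q$ from~\eqref{ass:phi}, and letting $\varphi$ range over a countable family of bumps whose positivity sets exhaust $\Omega_0$ — one gets $u \equiv 0$ a.e.\ in $\R^n$, a contradiction. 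Consequently $U \coloneqq \{x \in \Omega : u(x) > 0\}$ is non-empty and open, and no component of $\Omega$ is disjoint from $U$ (since on such a component $u$ would vanish identically, being non-negative). Hence, if $U \neq \Omega$, there is a component $\Omega_0$ of $\Omega$ with $\emptyset \neq U \cap \Omega_0 \subsetneq \Omega_0$.

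Suppose now, for contradiction, that $U \neq \Omega$, and fix such an $\Omega_0$. A routine connectedness-and-distance argument yields a point $x_1 \in U \cap \Omega_0$ with $\dist(x_1, \Omega_0 \setminus U) < \dist(x_1, \R^n \setminus \Omega_0)$; setting $R \coloneqq \dist(x_1, \Omega_0 \setminus U)$ and $B \coloneqq B_R(x_1)$, one obtains $\overline{B} \subset \Omega$, $B \subset U$, and a boundary point $x_0 \in \partial B \cap \Omega$ with $u(x_0) = 0$. Since $\overline{B} \subset \Omega$, the restriction $u|_B$ belongs to $W^{1,p}(B) \cap \W^{s,q}(B) \cap C^0(\overline{B})$ — the only point to check, finiteness of the Gagliardo seminorm over $B$, follows from $\mathscr{C}_B \subset \mathscr{C}_\Omega$ — and $u|_B$ is again a weak supersolution of $Qu = 0$ in $B$, because test functions supported in $B$ are admissible competitors on $\Omega$. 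As $\partial B$ is smooth, hence of class $C^{1,\alpha}$, and $u > 0$ in $B$, $u \ge 0$ in $\R^n$, $u(x_0) = 0$, Theorem~\ref{Hopflemma} gives $\liminf_{h \searrow 0} h^{-1} u(x_0 - h\,\nu(x_0)) > 0$, with $\nu(x_0) = (x_0 - x_1)/R$ the outer normal of $B$ at $x_0$. On the other hand $x_0 \in \Omega$ is an interior minimum point of the $C^1$ function $u$, so $Du(x_0) = 0$, and therefore $h^{-1} u(x_0 - h\,\nu(x_0)) \to -Du(x_0)\cdot\nu(x_0) = 0$, a contradiction. Thus $U = \Omega$, i.e.\ $u > 0$ in $\Omega$, which completes the dichotomy.

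The argument is not conceptually delicate — Hopf's lemma does the heavy lifting — but the points requiring care are: checking that all hypotheses of Theorem~\ref{Hopflemma} genuinely transfer to the inscribed ball (the functional-space memberships and the supersolution property on the smaller domain); carrying out the inscribed-ball construction so that $\overline{B}$ stays inside $\Omega$; and dealing with the possible disconnectedness of $\Omega$, where it is precisely the nonlocal term of $Q$ that prevents $u$ from being positive on one component and zero on another.
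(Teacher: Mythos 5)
Your proof is correct and relies on the same two ingredients as the paper's: the Hopf lemma (Theorem~\ref{Hopflemma}) applied on an inscribed ball, contradicted by the vanishing of $Du$ at an interior minimum of the $C^1$ function $u$, and the sign of the nonlocal term (using $A(x,0)=0$, $B\ge\Lambda^{-1}$, and the coercivity of $\phi$) to propagate the vanishing of $u$ from a whole component to all of $\R^n$. The only difference is the contrapositive arrangement --- the paper spreads a zero through its component and then outward, while you first rule out components on which $u\equiv 0$ and then show the positivity set is all of $\Omega$ --- which is immaterial.
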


\begin{proof}
We already know from Proposition~\ref{WCPprop} that~$u \ge 0$ in~$\R^n$. Thus, to prove the proposition we suppose that there exists a point~$x_0 \in \Omega$ at which~$u(x_0) = 0$ and show that~$u$ must vanish identically in~$\R^n$. Let~$U$ be the connected component of~$\Omega$ containing~$x_0$. Note that~$U$ is a bounded connected open set. We begin by establishing that
\begin{equation} \label{u=0inOmega}
u = 0 \quad \mbox{in } U.
\end{equation}
The proof of this claim is standard. Nevertheless, we provide the details for the sake of completeness. Let~$U' \coloneqq \left\{ x \in U : u(x) = 0 \right\}$. Clearly,~$U'$ is relatively closed in~$U$ and non-empty, as~$u$ is continuous and~$x_0 \in U'$. Hence, its complement~$U \setminus U'$ is open. If it were non-empty, then there would exist a point~$x_1$ in it such that~$\dist(x_1, U') < \dist(x_1, \R^n \setminus U)$ and a radius~$r > 0$ for which~$B_r(x_1) \subset U \setminus U'$ and~$\partial B_r(x_1) \cap U' \ne \varnothing$. Applying Theorem~\ref{Hopflemma}, we would deduce that~$\frac{\partial u}{\partial \nu}(x_2) < 0$ at every point~$x_2 \in \partial B_r(x_1) \cap U'$. But this is a contradiction, since~$x_2$ is an interior minimum point for~$u$ and~$\nabla u(x_2)$ must therefore vanish,~$u$ being of class~$C^1$ inside~$U$. We conclude that~\eqref{u=0inOmega} holds true.

We now show that~$u$ must vanish outside of~$U$ as well. Here, the presence of the nonlocal operator~$Q_N$ plays a crucial role. In view of~\eqref{u=0inOmega} and of the fact that~$u$ is a weak supersolution, we infer that
$$
\iint_{\mathscr{C}_U} \phi \big( {u(x) - u(y)} \big) \left( \varphi(x) - \varphi(y) \right) \frac{B(x, y)}{|x - y|^{n + s q}} \, dx dy \ge 0,
$$
for every non-negative~$\varphi \in C^\infty_c(U)$.
%This means that~$u$ is a weak supersolution of~$Q_N u = 0$ in~$U$.
Being~$\phi$ odd and~$B$ symmetric, by taking advantage of~\eqref{u=0inOmega} once again we find
$$
    \int_U \bigg( {\int_{\R^n\setminus U}\phi\big(u(y) \big)\frac{B(x,y)}{|x-y|^{n+sq}} \, dy} \bigg)\varphi(x)\,dx\leq 0 \quad \text{for every non-negative } \varphi\in C^\infty_c(U).
$$
Since~$u$ is non-negative,~$B$ is strictly positive in~$\R^n \times \R^n$, and~$\phi$ is strictly positive in~$(0, +\infty)$, thanks to assumptions~\eqref{ass:B} and~\eqref{ass:phi}, we deduce
% from~\eqref{intle0} 
that~$u = 0$ in~$\R^n \setminus U$, thus concluding the proof.
\end{proof}

\appendix
\section{Proof of Lemma~\ref{Linftyestlem}} \label{app:Linfty}

\noindent
We include here a proof of Lemma~\ref{Linftyestlem}, claiming global~$W^{1, p}$ and~$L^\infty$ bounds for the solutions of problem~\eqref{dirprobforu}. We begin by establishing the~$W^{1, p}$ estimate.

By testing the weak formulation of~\eqref{dirprobforu} with~$\varphi = u - g$, we get
\begin{align*}
& \int_\Omega A(x, Du) \cdot \big( {Du - Dg} \big) \, dx \\
& \hspace{15pt} + \iint_{\mathscr{C}_\Omega} \phi \big( {u(x) - u(y)} \big) \big( {u(x) - u(y) - g(x) + g(y)} \big) \frac{B(x, y)}{|x - y|^{n + s q}} \, dx dy = \int_\Omega f (u - g) \, dx.
\end{align*}
Taking advantage of assumption~\eqref{ass:A} and arguing similarly as in the proof of Lemma~\ref{lieblem}, we obtain the following estimate for the first summand on the left-hand side:
$$
\int_\Omega A(x, Du) \cdot \left( Du - Dg \right) \, dx \ge \frac{1}{C} \, \| Du \|_{L^p(\Omega)}^p - C \left( 1 + \| Dg \|_{L^p(\Omega)}^p \right).
$$
The second summand can be handled using hypothesis~\eqref{ass:B} and~\eqref{ass:phi} along with the weighted Young's inequality. We get
\begin{align*}
& \iint_{\mathscr{C}_\Omega} \phi \big( {u(x) - u(y)} \big) \big( {u(x) - u(y) - g(x) + g(y)} \big) \frac{B(x, y)}{|x - y|^{n + s q}} \, dx dy \\
& \hspace{40pt} \ge \frac{1}{C} \iint_{\mathscr{C}_\Omega} \frac{|u(x) - u(y)|^q}{|x - y|^{n + s q}} \, dx dy - C \iint_{\mathscr{C}_\Omega} \frac{|u(x) - u(y)|^{q - 1} |g(x) - g(y)|}{|x - y|^{n + s q}} \, dx dy \\
& \hspace{40pt} \ge \frac{1}{C} \iint_{\mathscr{C}_\Omega} \frac{|u(x) - u(y)|^q}{|x - y|^{n + s q}} \, dx dy - C \iint_{\mathscr{C}_\Omega} \frac{|g(x) - g(y)|^q}{|x - y|^{n + s q}} \, dx dy.
\end{align*}
Finally, thanks to the Sobolev (or Morrey) and Poincar\'e inequalities, the right-hand is estimated by
$$
\int_\Omega f (u - g) \, dx \le \| f \|_{L^n(\Omega)} \| u - g \|_{L^{\frac{n}{n - 1}}(\Omega)} \le C \| f \|_{L^n(\Omega)} \big( {\| Du \|_{L^p(\Omega)} + \| Dg \|_{L^p(\Omega)}} \big).
$$
Hence, using again the weighted Young's inequality, we find that
\begin{align*}
& \| Du \|_{L^p(\Omega)}^p + \iint_{\mathscr{C}_\Omega} \frac{|u(x) - u(y)|^q}{|x - y|^{n + s q}} \, dx dy \\
& \hspace{70pt} \le C \left( 1 + \| Dg \|_{L^p(\Omega)}^p + \iint_{\mathscr{C}_\Omega} \frac{|g(x) - g(y)|^q}{|x - y|^{n + s q}} \, dx dy + \| f \|_{L^n(\Omega)}^{\frac{p}{p - 1}} \right).
\end{align*}
The bound for~$\| u \|_{W^{1, p}(\Omega)}$ immediately follows from this and Poincar\'e's inequality.

We now deal with the global boundedness of~$u$ in~$\Omega$, which we establish it via the De Giorgi-Stampacchia method. Clearly, in the supercritical case~$p > n$ the bound is a consequence of Morrey's inequality and the previous estimate for~$\| u \|_{W^{1, p}(\Omega)}$. Assume then that~$n \ge p$.

Let~$k > M > M_1$, with
$$
M_1 \coloneqq 1 + \| g \|_{L^\infty(\Omega')} + \left( \int_{\R^n} \frac{|g(y)|^{q - 1}}{\left( 1 + |y| \right)^{n + s q}} \, dy \right)^{\! \frac{1}{p - 1}} + \| f \|_{L^n(\Omega)}^{\frac{1}{p - 1}}.
$$
Observe that the quantity on the right-hand side is finite, thanks to the inequality
\begin{equation} \label{tailest}
\int_{\R^n} \frac{|g(y)|^{q - 1}}{\left( 1 + |y| \right)^{n + s q}} \, dy \le C \left( \| g \|_{L^q(\Omega')} + \iint_{\mathscr{C}_\Omega} \frac{|g(x) - g(y)|^q}{|x - y|^{n + s q}} \, dx dy  \right)^{\! q - 1},
\end{equation}
which is easily established by using~\eqref{bounds:omega}. As~$k > \| g \|_{L^\infty(\Omega')}$, the function~$\varphi \coloneqq (u - k)_+ \chi_{\Omega}$ lies in~$W^{1, p}_0(\Omega) \cap \W^{s, q}_0(\Omega)$, and can thus be plugged in the weak formulation of problem~\eqref{dirprobforu}. Setting~$\Omega_k \coloneqq \left\{ x \in \Omega : u(x) > k \right\}$, we obtain
\begin{equation} \label{eqagainstu-k}
\begin{aligned}
& \int_{\Omega_k} f (u - k) \, dx = \int_{\Omega_k} A(x, Du) \cdot Du \, dx \\
& \hspace{30pt} + \iint_{\mathscr{C}_\Omega} \frac{\phi \big( {u(x) - u(y)} \big) \big( {(u(x) - k)_+ \chi_{\Omega}(x) - (u(y) - k)_+ \chi_{\Omega}(y)} \big) B(x, y)}{|x - y|^{n + s q}} \, dx dy.
\end{aligned}
\end{equation}
On the one hand, we clearly have that
\begin{equation} \label{locallb}
\begin{aligned}
\int_{\Omega_k} A(x, Du) \cdot Du \, dx & \ge \frac{1}{C} \int_{\Omega_k} \left( |Du|^2 + \mu^2 \right)^{\frac{p - 2}{2}} |Du|^2 \, dx \\
& \ge \frac{1}{C} \Big( {\big[ {(u - k)_+} \big]_{W^{1, p}(\Omega)}^p - \mu^p |\Omega_k|} \Big).
\end{aligned}
\end{equation}
Regarding the nonlocal term, thanks to the oddness of~$\phi$ we observe that
\begin{align*}
& \phi \big( {u(x) - u(y)} \big) \big( {(u(x) - k)_+ \chi_{\Omega}(x) - (u(y) - k)_+ \chi_{\Omega}(y)} \big) \\
& \hspace{70pt} = \begin{cases}
\phi \big( {u(x) - u(y)} \big) \big( {u(x) - u(y)} \big) & \quad \mbox{if } x, y \in \Omega_k, \\
\phi \big( {u(x) - u(y)} \big) \big( {u(x) - k} \big) & \quad \mbox{if } x \in \Omega_k, \, y \in \R^n \setminus \Omega_k, \\
\phi \big( {u(y) - u(x)} \big) \big( {u(y) - k} \big) & \quad \mbox{if } x \in \R^n \setminus \Omega_k, \, y \in \Omega_k, \\
0 & \quad \mbox{if } x, y \in \R^n \setminus \Omega_k.
\end{cases}
\end{align*}
Recalling~\eqref{ass:B}-\eqref{ass:phi} and using that~$u(y) = g(y) \le k$ for a.e.~$y \in \Omega' \setminus \Omega$, we conclude from the previous identity that
\begin{equation} \label{nonlocallb}
\begin{aligned}
&\iint_{\mathscr{C}_\Omega} \frac{\phi \big( {u(x) - u(y)} \big) \big( {(u(x) - k)_+ \chi_{\Omega}(x) - (u(y) - k)_+ \chi_{\Omega}(y)} \big) B(x, y)}{|x - y|^{n + s q}} \, dx dy \\
& \hspace{130pt} \ge \frac{1}{C} \int_{\Omega'} \int_{\Omega'} \frac{\big| {\big( {u(x) - k} \big)_+ - \big( {u(y) - k} \big)_+} \big|^q}{|x - y|^{n + sq}} \, dx dy \\
& \hspace{130pt} \quad - C \int_{\Omega_k} \left( \int_{\R^n \setminus \Omega'} \frac{(u(x) - k) \big( {g(y) - u(x)} \big)_+^{q - 1}}{|x - y|^{n + sq}} \, dy \right) dx \\
& \hspace{130pt} \ge - C \, k^{p - 1} \, \| (u - k)_+ \|_{L^1(\Omega)},
%\frac{1}{C} \, \big[ {(u - k)_+} \big]_{W^{s, q}(\Omega')}^q
\end{aligned}
\end{equation}
where we used that
\begin{align*}
\int_{\Omega_k} \left( \int_{\R^n \setminus \Omega'} \!\! \frac{(u(x) - k) \big( {g(y) - u(x)} \big)_+^{q - 1}}{|x - y|^{n + sq}} \, dy \right) dx & \le \int_{\Omega_k} (u(x) - k) \left( \int_{\R^n \setminus \Omega'} \! \frac{|g(y)|^{q - 1}}{|x - y|^{n + s q}} \, dy \right) dx \\
& \le C \| (u - k)_+ \|_{L^1(\Omega)} \int_{\R^n} \frac{|g(y)|^{q - 1}}{\left( 1 + |y| \right)^{n + s q}} \, dy.
\end{align*}
Finally, we estimate
$$
\int_{\Omega_k} f (u - k) \, dx \le \| f \|_{L^n(\Omega)} \| (u - k)_+ \|_{L^{\frac{n}{n - 1}}(\Omega)} \le C \, k^{p - 1} \, \| (u - k)_+ \|_{L^{\frac{n}{n - 1}}(\Omega)}.
$$
Combining this with~\eqref{eqagainstu-k},~\eqref{locallb},~\eqref{nonlocallb}, and the Poincar\'e-Sobolev inequality in~$W^{1,p}_0(\Omega)$, we get that
$$
\| (u - k)_+ \|_{L^{m p}(\Omega)}^p \le C \Big( {k^p |\Omega_k| + k^{p - 1} \| (u - k)_+ \|_{L^1(\Omega)}} + k^{p - 1} \| (u - k)_+ \|_{L^\frac{n}{n - 1}(\Omega)} \Big),
$$
where~$m$ is equal to~$\frac{n}{n - p}$ when~$n > p$ or to any number strictly larger than~$\frac{n}{n- 1}$ when~$n = p$. From this and the inequalities
\begin{align*}
\| (u - k)_+ \|_{L^{\frac{n}{n - 1}}(\Omega)} & \le (k - h)^{1 - \frac{(n - 1)m p}{n}} \| (u - h)_+ \|_{L^{m p}(\Omega)}^{\! \frac{(n - 1)m p}{n}}, \\
|\Omega_k| & \le |\Omega|^{\frac{1}{n}} (k - h)^{- \frac{(n - 1) m p}{n}} \| (u - h)_+ \|_{L^{m p}(\Omega)}^{\! \frac{(n - 1)m p}{n}}, \\
\| (u - k)_+ \|_{L^1(\Omega)} & \le |\Omega|^{\frac{1}{n}} (k - h)^{1 - \frac{(n - 1)m p}{n}} \| (u - h)_+ \|_{L^{m p}(\Omega)}^{\! \frac{(n - 1) m p}{n}},
\end{align*}
valid for any~$h \in (0, k)$, we find
$$
\| (u - k)_+ \|_{L^{m p}(\Omega)}^p \le C \, \frac{k^p}{ (k - h)^{\frac{(n - 1)m p}{n}} } \, \| (u - h)_+ \|_{L^{m p}(\Omega)}^{\! \frac{(n - 1)m p}{n}}.
$$

Letting now~$\delta \coloneqq \frac{(n - 1) m - n}{n} > 0$,~$k_i \coloneqq (2 - 2^{-i}) M$, and~$\Psi_i \coloneqq \| (u - k_i)_+ \|_{L^{m p}(\Omega)}^p$ for every~$i \in \N \cup \{ 0 \}$, we infer that
$$
\Psi_{i + 1} \le \frac{C}{M^{\delta p}} \, 2^{(1 + \delta) p i} \, \Psi_i^{1 + \delta} \quad \mbox{for every } i \in \N \cup \{ 0 \}.
$$
By taking advantage of~\cite[Lemma~7.1]{G03}, we conclude that~$\displaystyle \| (u - 2M)_+ \|_{L^{m p}(\Omega)}^p = \lim_{i \rightarrow +\infty} \Psi_i = 0$, i.e.,~$u \le 2 M$ in~$\Omega$, provided~$\| (u - M)_+ \|_{L^{m p}(\Omega)}^p = \Psi_0 \le C^{-1} M^p$ for some constant~$C \ge 1$ large enough. Clearly, this can be achieved by taking~$M \coloneqq M_1 + M_2$ with
$$
M_2 \coloneqq C \| u \|_{W^{1, p}(\Omega)}
$$
and~$C \ge 1$ sufficiently large, thanks to the Sobolev inequality.

We thus established an upper bound for~$u$. Since a corresponding lower bound can be obtained analogously, we conclude that the proof of Lemma~\ref{Linftyestlem} is complete---also recall the tail estimate~\eqref{tailest}.

\section{Proof of Lemma~\ref{lemma:ext}} \label{app:Hextproof}

\noindent
This appendix is devoted to the proof of the extension Lemma~\ref{lemma:ext}, used within Section~\ref{hopfsec}. We begin by constructing a~$C^{1, \alpha}(\overline{\R^n_+}) \cap C^\infty(\R^n_+)$-extension~$H$ of~$h$ satisfying~$DH(0) = 0$,~\eqref{stime:H1}, and
\begin{equation} \label{stime:H}
\| H \|_{ C^{1, \a} \left( \overline{\R^n_+} \right)} \le C \, \| h \|_{C^{1, \a}(\R^{n-1})}.
\end{equation}
We take as~$H$ the harmonic extension of~$h$ to~$\R^n_+$, namely the unique bounded solution of
    \begin{equation}\label{eq:H}
        \begin{cases}
            \Delta H = 0\quad & \text{in } \R^n_+,
            \\
            H=h \quad & \text{on } \partial \R^n_+.
        \end{cases}
    \end{equation}
Standard regularity theory yields that~$H$ is of class~$C^{1,\a}_{\loc}(\overline{\R^n_+}) \cap C^\infty(\R^n_+)$. Furthermore, since~$\partial_{y_n} H(\cdot,0)=- (-\Delta)^{1/2} h$, from~\eqref{h00} and~\eqref{halflapofhis0} we infer that~$DH(0)=0$.

We now address the proof of estimates~\eqref{stime:H} and~\eqref{stime:H1}, which will both follow from the Poisson representation formula
\begin{equation}\label{repres}
    \begin{aligned}
        H(y', y_n) &  = \frac{2 y_n}{n \omega_n} \int_{\R^{n - 1}} \frac{h(z')}{\left( |y' - z'|^2 + y_n^2 \right)^{\frac{n}{2}}} \, dz'
        \\
        & =\frac{2 y_n}{n \omega_n} \int_{\R^{n - 1}} \frac{h(y' + \ell')}{\left( |\ell'|^2 + y_n^2 \right)^{\frac{n}{2}}} \, d\ell'
        \\
        &= \frac{2}{n \omega_n} \,\int_{\R^{n-1}}\frac{h(y'+y_n \tau')}{(1+|\tau'|^2)^{\frac{n}{2}}}\,d\tau',
    \end{aligned}
\end{equation}
valid for~$(y',y_n)\in \R^n_+$. Here we set~$\omega_n=|B_1|$. From~\eqref{repres} and the fact that
\begin{equation} \label{1=1}
\frac{2 y_n}{n \omega_n} \,\int_{\R^{n-1}}\frac{d\ell'}{(|\ell'|^2 + y_n^2)^{\frac{n}{2}}} = 1 \quad \mbox{for every } y_n > 0,
\end{equation}
we immediately infer that
\begin{equation} \label{Hleh}
\| H \|_{L^\infty(\R^n_+)} \le \| h \|_{L^\infty(\R^{n - 1})}.
\end{equation}

By differentiating the second identity in~\eqref{repres}, we get
$$
D'H(y',y_n) = \frac{2 y_n}{n \omega_n} \int_{\R^{n-1}}\frac{D' h(y' + \ell')}{\left( |\ell'|^2 + y_n^2 \right)^{\frac{n}{2}}} \, d\ell'.
$$
From this and~\eqref{1=1}, it readily follows that
\begin{equation}\label{D'H}
\big| {D'H(y',y_n)} \big| \le \| D'h \|_{L^\infty(\R^{n-1})} \quad \mbox{for all } y' \in \R^{n - 1}, \, y_n > 0
\end{equation}
and
\begin{equation}\label{D'Ha}
\big {|D'H(y',y_n)-D'H(z',y_n)} \big| \le [D'h]_{C^\alpha(\R^{n - 1})} \, |y' - z'|^\a \quad \mbox{for all } y', z' \in \R^{n - 1}, \, y_n > 0.
%\begin{aligned}
%\big {|D'H(y',y_n)-D'H(z',y_n)} \big| & \leq [D'h]_{C^\a}(\R^{n - 1}) \, |y'-z'|^\a \, \frac{2\,y_n}{n\,\omega_n} \int_{\R^{n-1}}\frac{d\tau'}{\left( |\tau'|^2 + y_n^2 \right)^{\frac{n}{2}}}=[D'h]_{C^\a(\R^{n-1})}\,|y'-z'|^\a\,,
%\end{aligned}
\end{equation}
Through a suitable change of variables, we then compute
\begin{equation}\label{dnH}
\begin{aligned}
\partial_{y_n} H(y',y_n) & = \frac{2}{n \omega_n} \int_{\R^{n-1}}\frac{|\ell'|^2-(n-1) y_n^2}{\left(|\ell'|^2+y_n^2\right)^{\frac{n+2}{2}}}\,h(y'+\ell')\,d\ell' \\
& = \frac{2}{n \omega_n y_n}\int_{\R^{n-1}}\frac{|\tau'|^2-(n-1)}{\left(1 + |\tau'|^2 \right)^{\frac{n+2}{2}}}\,h(y'+y_n \tau')\,d\tau' \\
& =\frac{2}{n \omega_n y_n} \Bigg\{ {\int_{\R^{n-1}\setminus B'_M}\frac{|\tau'|^2-(n-1)}{(1+|\tau'|^2)^{\frac{n+2}{2}}}\, \Big( {h(y'+y_n \tau') - h(y')} \Big) \, d\tau'} \\
& \quad + {\int_{B'_M}\frac{|\tau'|^2-(n-1)}{(1+|\tau'|^2)^{\frac{n+2}{2}}}\Big( {h(y'+ y_n \tau')-h(y')-D'h(y')\cdot y_n \tau'} \Big) \, d\tau'} \Bigg\},
\end{aligned}
\end{equation}
for~$M > 0$ to be chosen. Note that for the last identity we took advantage of the identities
\begin{equation}\label{identities}
\int_{\R^{n-1}}\frac{|\tau'|^2-(n-1)}{(1+|\tau'|^2)^{\frac{n+2}{2}}}\,d\tau'=0\quad\text{and}\quad \int_{B'_M}\frac{|\tau'|^2-(n-1)}{(1+|\tau'|^2)^{\frac{n+2}{2}}}\,w'\cdot\tau'\,d\tau'=0,
\end{equation}
valid for every~$w'\in \R^{n-1}$ and~$M>0$. By taking~$M = y_n^{-1}$ in~\eqref{dnH}, we deduce that
\begin{equation}\label{dnH1}
\begin{aligned}
\big| {\partial_{y_n}H(y',y_n)} \big| & \leq \frac{C}{y_n}\,\Bigg\{ { \| h \|_{L^\infty(\R^{n - 1})} \int_M^\infty\frac{d\rho}{\rho^2} + [D'h]_{C^\alpha(\R^{n - 1})} y_n^{1+\a} \int_0^M \frac{\rho^{n - 1 + \alpha}}{(1+\rho)^{n}} \, d\rho} \Bigg\} \\
& \leq C\, \Big\{ {\| h \|_{L^\infty(\R^{n - 1})} (M y_n)^{-1} + [D'h]_{C^\alpha(\R^{n - 1})} (M y_n)^\alpha } \Big\} \\
& \leq C \, \| h \|_{C^{1,\a}(\R^{n-1})} \quad \mbox{for all } y' \in \R^{n - 1}, \, y_n > 0.
\end{aligned}
\end{equation}
Arguing as for~\eqref{dnH}, we also have that
\begin{align*}
& \big| {\partial_{y_n}H(y',y_n) - \partial_{y_n} H(z', y_n)} \big| \\
& \hspace{10pt} \le \frac{C}{y_n} \Bigg\{ {\int_{\R^{n-1}\setminus B'_N} \! \frac{\big| {h(y'+y_n \tau') - h(y') - h(z' + y_n \tau') + h(z')} \big|}{(1+|\tau'|)^n} \, d\tau'} \\
& \hspace{10pt} \quad + {\int_{B'_N} \! \frac{\big| {h(y'+ y_n \tau') - h(y') - h(z' + y_n \tau') + h(z') - \big( {D'h(y') - D'h(z')} \big) \cdot y_n \tau'} \big| }{(1+|\tau'|)^n} \, d\tau'} \Bigg\},
\end{align*}
for any radius~$N > 0$. Using the fundamental theorem of calculus, it is not difficult to see that the numerator of the fraction inside the first integral is bounded by~$\| D'h \|_{C^{\alpha}(\R^{n - 1})} |y' - z'| \min \big\{ {2, y_n^\alpha |\tau'|^\alpha} \big\}$, while that pertaining to the second integral by~$2 [D'h]_{C^\alpha(\R^{n - 1})} y_n^{1 + \alpha} |\tau'|^{1 + \alpha}$. In light of these estimates, choosing~$N = |y' - z'| y_n^{-1}$ we get that
\begin{align*}
& \big| {\partial_{y_n}H(y',y_n) - \partial_{y_n} H(z', y_n)} \big| \\
& \hspace{5pt} \le \frac{C \, \| D'h \|_{C^{\alpha}(\R^{n - 1})}}{y_n} \left\{ |y' - z'| \int_{y_n^{-1}}^{+\infty} \frac{d\rho}{\rho^2} + y_n^\alpha |y' - z'| \int_{N}^{y_n^{-1}} \frac{d\rho}{\rho^{2 - \alpha}} + y_n^{1 + \alpha} \int_0^{N} \frac{\rho^{n - 1 + \alpha}}{(1 + \rho)^n} \, d\rho \right\} \\
& \hspace{5pt} \le C \, \| D'h \|_{C^{\alpha}(\R^{n - 1})} \left\{ |y' - z'| + \frac{|y' - z'|}{(N y_n)^{1 - \alpha}} + (N y_n)^{\alpha} \right\} \le C \, \| D'h \|_{C^\alpha(\R^{n - 1})} |y' - z'|^\alpha,
\end{align*}
for every~$y', z' \in \R^{n - 1}$ such that~$|y' - z'| < 1$ and for every~$y_n > 0$. Combining this with~\eqref{dnH1}, we conclude that
\begin{equation} \label{dnHa}
\big| {\partial_{y_n}H(y',y_n) - \partial_{y_n} H(z', y_n)} \big| \le C \| h \|_{C^{1, \alpha}(\R^{n - 1})} |y' - z'|^\alpha \quad \mbox{for } y', z' \in \R^{n - 1}, \, y_n > 0.
\end{equation}
   
Now, differentiating the first identity in \eqref{repres}, we obtain the following alternative expression for the horizontal gradient of~$H$:
$$
D'H(y',y_n) = \frac{2 y_n}{\omega_n} \int_{\R^{n-1}}\frac{h(z')\,(z'-y')}{( |y'-z'|^2+y_n^2)^{\frac{n+2}{2}}}\,dz' = \frac{2}{\omega_n y_n}\int_{\R^{n-1}}\frac{h(y'+y_n \tau') \, \tau'}{(1+|\tau'|^2)^{\frac{n+2}{2}}} \, d\tau'.
$$
Therefore, for all~$i,j=1,\dots,n-1$ we have, by symmetry,
\begin{align*}
\partial^2_{y_i y_j} H(y',y_n) & =\frac{2}{\omega_n y_n}\int_{\R^{n-1}}\frac{\partial_{y'_i}h(y'+y_n \tau') \, \tau_j}{(1+|\tau'|^2)^{\frac{n+2}{2}}}\,d\tau' \\
& = \frac{2}{\omega_n y_n} \int_{\R^{n-1}} \frac{\Big( {\partial_{y'_i}h(y'+y_n \tau')-\partial_{y'_i} h(y')} \Big) \tau_j}{(1+|\tau'|^2)^{\frac{n+2}{2}}} \,d\tau',
\end{align*}
so that
\begin{equation} \label{dijH'}
\begin{aligned}
\big| { \partial^2_{y_i y_j} H(y',y_n)} \big| & \leq C \, [D'h]_{C^\alpha(\R^{n - 1})} \, y_n^{\alpha - 1} \int_{\R^{n-1}} \frac{|\tau'|^{1+\a}}{(1+|\tau'|)^{n+2}} \, d\tau' \\
& \leq C \, [D'h]_{C^\alpha(\R^{n - 1})} \, y_n^{\alpha - 1}.
\end{aligned}
\end{equation}
Next, from the second identity in \eqref{dnH}, we get
\begin{align*}
\partial^2_{y_i y_n} H(y',y_n) & = \frac{2}{n \omega_n y_n} \int_{\R^{n-1}}\frac{|\tau'|^2-(n-1)}{\big( 1+|\tau'|^2 \big)^{\frac{n+2}{2}}}\,\partial_{y'_i} h(y'+y_n \tau')\,d\tau' \\
& = \frac{2}{n \omega_n y_n} \int_{\R^{n-1}} \frac{|\tau'|^2-(n-1)}{\big( 1+|\tau'|^2 \big)^{\frac{n+2}{2}}} \big( {\partial_{y'_i} h(y'+y_n \tau) - \partial_{y_i'} h(y')} \big) \, d\tau',
\end{align*}
where we also made use of the first identity in~\eqref{identities}. Therefore, we estimate
\begin{equation}\label{dinH'}
\big| {\partial^2_{y_i y_n} H(y',y_n)} \big| \leq C \, [D'h]_{C^\a(\R^{n - 1})} \, y_n^{\alpha - 1} \int_{\R^{n-1}} \frac{|\tau'|^\a}{(1+|\tau'|)^n} \, d\tau' \le C \, [D'h]_{C^\a(\R^{n - 1})} \, y_n^{\a-1}.
\end{equation}
Since from the equation in~\eqref{eq:H} we know that~$\partial^2_{y_n y_n} H = - \sum_{i = 1}^{n - 1} \partial^2_{y_i y_i} H$ in~$\R^n_+$, from~\eqref{dijH'} it also follows that~$\big| {\partial^2_{y_n y_n} H(y',y_n)} \big| \le C \, [D'h]_{C^\a(\R^{n - 1})} \, y_n^{\a-1}$. By combining this with~\eqref{dijH'} and~\eqref{dinH'}, we conclude that~\eqref{stime:H1} holds true.

Finally, from~\eqref{stime:H1} and the fundamental theorem of calculus, we find that
$$
\big| {DH(y',y_n)-DH(y',z_n)} \big| = \left| \int_{z_n}^{y_n} \partial_{y_n} DH (y',t)\,dt \right| \leq C\,[D'h]_{C^\a(\R^{n-1})}\,|y_n-z_n|^\a.
$$
By putting together this with~\eqref{Hleh},~\eqref{D'H},~\eqref{D'Ha},~\eqref{dnH1}, and~\eqref{dnHa}, we obtain~\eqref{stime:H}.

To conclude the proof, it suffices to consider any~$C^{1, \alpha}(\R^n)$-extension of~$H$ to the whole~$\R^n$ having~$C^{1, \alpha}(\R^n)$ norm bounded by that of~$H$, up to a factor. This can be done, for instance, via the elegant approach of~\cite{S64}--see also~\cite[Theorem 1.1.17]{maz}.

\section*{Acknowledgments}
\noindent
The authors have been partially supported by the ``Gruppo Nazionale per l'Analisi Matematica, la Probabilit\`a e le loro Applicazioni'' (GNAMPA) of the ``Istituto Nazionale di Alta Matematica'' (INdAM, Italy).

%\bigskip
%
%\par\noindent {\bf Data availability statement.} Data sharing not applicable to this article as no datasets were generated or analysed during the current study.
%
%\section*{Compliance with Ethical Standards}\label{conflicts}
%
%%\todo{Inserire eventuali fondi di ricerca}
%
%\smallskip
%\par\noindent
%{\bf Funding}. 
%The authors have been partially supported by the ``Gruppo Nazionale per l'Analisi Matematica, la Probabilit\`a e le loro Applicazioni'' (GNAMPA) of the ``Istituto Nazionale di Alta Matematica'' (INdAM, Italy).
%
%\bigskip
%\par\noindent
%{\bf Conflict of Interest}. The authors declare that they have no conflict of interest.

%\printbibliography
%\bibliographystyle{abbrv}
%\bibliography{biblio}

\end{document}